\documentclass[11pt]{amsart}
\usepackage{amsmath}
\usepackage{amssymb}
\usepackage{epsfig}
\usepackage{graphics}
\usepackage{amsthm}
\usepackage{bm}
\usepackage{url}

\usepackage[dvipdfmx]{hyperref}
\usepackage{color}

\numberwithin{equation}{section}

\setlength{\textwidth}{6.5truein}
\setlength{\textheight}{9truein}
\setlength{\oddsidemargin}{-0.0in}
\setlength{\evensidemargin}{-0.0in}
\setlength{\topmargin}{-0.2truein}

\newtheorem{theorem}{Theorem}[section]
\newtheorem{lemma}[theorem]{Lemma}

\newtheorem{proposition}[theorem]{Proposition}
\newtheorem{Definition}[theorem]{Definition}

\newenvironment{definition}{\begin{Definition}\rm}{\end{Definition}}
\newtheorem{Rem}[theorem]{Remark}
\newenvironment{remark}{\begin{Rem}\rm}{\end{Rem}}

\numberwithin{figure}{section}

\usepackage{graphicx}
\ifx\pdfoutput\undefined
  \DeclareGraphicsRule{*}{eps}{*}{}
\else
  \DeclareGraphicsRule{*}{mps}{*}{}
\fi
\graphicspath{{images/}}

\newcommand{\dps}{\displaystyle}
\newcommand{\ds}{\displaystyle}

\newcommand\R{\mathbb{R}}

\newcommand\be{\begin{equation}}
\newcommand\ee{\end{equation}}
\newcommand\bea{\begin{eqnarray}}
\newcommand\eea{\end{eqnarray}}
\newcommand\beaa{\begin{eqnarray*}}
\newcommand\eeaa{\end{eqnarray*}}
\newcommand\beba{\begin{equation}\left\{\begin{array}{rcl}}
\newcommand\eeba{\end{array}\right.\end{equation}}
\newcommand\bebaa{\begin{equation*}\left\{\begin{array}{rcl}}
\newcommand\eebaa{\end{array}\right.\end{equation*}}
\newcommand\beca{\begin{equation}\left\{\begin{array}{rcll}}
\newcommand\eeca{\end{array}\right.\end{equation}}
\newcommand\becaa{\begin{equation*}\left\{\begin{array}{rcll}}
\newcommand\eecaa{\end{array}\right.\end{equation*}}
\newcommand\bela{\begin{equation}\left\{\begin{array}{l}}
\newcommand\eela{\end{array}\right.\end{equation}}
\newcommand\belaa{\begin{equation*}\left\{\begin{array}{l}}
\newcommand\eelaa{\end{array}\right.\end{equation*}}
\newcommand\bR{{\Bbb R}}

\newcommand\p{{\partial}}

\setlength{\textwidth}{6.5truein} \setlength{\textheight}{9truein}
\setlength{\oddsidemargin}{-0.0in}
\setlength{\evensidemargin}{-0.0in}
\setlength{\topmargin}{-0.0truein}

\newcommand{\ep}{\varepsilon}

\definecolor{purple}{rgb}{.65,0,.65}

\begin{document}
\title[Global existence and uniqueness of solutions for reaction-interface systems]{Global existence and uniqueness of solutions for one-dimensional reaction-interface systems}

\author{Yan-Yu Chen}
\address{Department of Mathematics, National Taiwan University,
Taipei 10617, Taiwan}
\email{chenyanyu24@gmail.com}

\author{Hirokazu Ninomiya}
\address{School of Interdisciplinary Mathematical Sciences, Meiji University
4-21-1 Nakano, Nakano-ku, Tokyo 164-8525, Japan}
\email{hirokazu.ninomiya@gmail.com}

\author{Chang-Hong Wu}
\address{Department of Applied Mathematics,
 National Yang Ming Chiao Tung University, Hsinchu 300, Taiwan}
\email{changhong@math.nctu.edu.tw}

\begin{abstract}
In this paper, we provide a mathematical framework in studying the wave propagation with the annihilation phenomenon
in excitable media. We deal with the existence and uniqueness of solutions to a one-dimensional free boundary problem (called a reaction--interface system) arising from the singular limit of a FitzHugh--Nagumo type reaction--diffusion system.
Because of the presence of the annihilation, interfaces may intersect each other.
We introduce the notion of weak solutions to study the continuation of solutions beyond the annihilation time.
Under suitable conditions, we show that the free boundary problem is well-posed.
\end{abstract}

\date{\today}

\thanks{{\em 2010 Mathematics Subject Classification.}  35K57, 35C07, 35R35}
\thanks{{\em Keywords:}  excitable system, 
singular limit, 
free boundary problem,
weak solutions, { reaction-interface system}}

\maketitle \setlength{\baselineskip}{15pt}
\section{Introduction}
\setcounter{equation}{0}
A variety of wave patterns can be triggered in excitable media such as traveling front, pulse waves,
periodic wave trains, rotating spirals, and so on. Reaction-diffusion systems have been successfully modeling these spatio-temporal patterns. A wide class of spatio-temporal patterns has been discussed in, for example,
\cite{ASM2003,BJG,BJS,HM,KS,KM,Meron92,MSCS2,MZ,Pismen,Tyson88,Winfree1972}
and the references cited therein. A fundamental phenomenon observed in the experiments is that
chemical waves propagating at a roughly constant speed may
collide with each other, and they annihilate one another. Though it might be a relatively simple
phenomenon in excitable media, it is still challenging to be proved theoretically.
To better understand this issue, considering interface problems is one of possible
ways to model this phenomenon (e.g., \cite{fife1984}). In general, solutions of interface problems produce transition layers (called interfaces), which separate the domain into different phase regions.
Interface problems have been deduced from nonlinear reaction-diffusion equations such as
Allen--Cahn type equation, Belousov--Zhabotinsky (BZ) systems, competition-diffusion systems or FitzHugh--Nagumo type systems with the diffusion rate being sufficiently small and/or the reaction term being large enough
(see, e.g., \cite{AHM,ChenXF92,HMN2009,Hilhorst2007,Mottoni,Nakamura,Rubinstein}).
Although there have been many studies regarding interface problems,
it is rather difficult to investigate the global dynamics in presence models.
For this, we shall consider a simplified model but still exhibit abundant patterns.
As a part of our series works, we aim to provide a mathematical framework to study the wave propagation, including colliding of waves, based on a FitzHugh--Nagumo type reaction-diffusion system proposed in \cite{CKN}.

More precisely, we are concerned with a one-dimensional free boundary problem
arising from the following system (\cite{CKN}):
\bea\label{RD}
\begin{cases}
u_t=\Delta u+\dfrac{1}{\ep^2}(f_{\ep}(u)-\ep \beta v),&\quad x\in\mathbb{R}^2,\ t>0,  \\
v_t=g(u,v),&\quad  x\in\mathbb{R}^2,\ t>0,
\end{cases}
\eea
where $\ep>0$ is assumed to be a small parameter; $\beta>0$; $f_{\ep}$ and $g$ take the following form:
\[
 f_{\ep}(u):=u(1-u)\left(u-\dfrac 12+\ep\alpha\right),\quad g(u,v)=g_1u-\dfrac {g_2 v}{g_3v+g_4}
\]
for some $\alpha>0$ and $g_j>0$ for $j=1,2,3,4$.
Let $(u^\ep,v^\ep)$ be a solution of \eqref{RD} with a suitable initial data.
By a formal analysis in \cite{CKN},
$u^{\ep}$ converges to either $1$ or $0$  as $\ep\downarrow  0$.
Denote the region on which $u^{\ep}$ converges to 1 by $\Omega(t)$.
Then, under the assumption
\begin{itemize}
\item[{\bf(A)}]  $g_1g_3>2g_2$,
\end{itemize}
the limiting problem of \eqref{RD} reduces to
\bea\label{SLPn}
\begin{cases}
V=W(v)-\kappa,&\quad (x,y)\in \partial\Omega(t),\ t>0,\\
v_t=g({\bf 1}_{\Omega(t)},v),&\quad (x,y)\in \R^2,\ t>0,
\end{cases}
\eea
where ${\bf 1}_{\Omega(t)}$ stands for the characteristic function having the value $1$ in $\Omega(t)$;
$\kappa$ is the curvature function of $\partial\Omega(t)$; $V$ is the outer normal velocity of $\partial\Omega(t)$, and
\[
W(v)=a-b v,\quad a=\sqrt{2}\alpha,\quad b=6\sqrt 2\beta.
\]
We refer to \cite[Appendix]{CKN} for further details.

The problem \eqref{SLPn} supports many fundamental patterns appearing in excitable media. In \cite{CKN},
traveling spots are considered. The convergence of traveling spots to planar traveling waves
is studied in \cite{CNT}. Traveling curved waves and their stability are investigated in \cite{NW}.
To understand the global dynamics of \eqref{SLPn}, it is natural to { begin with}
the one-dimensional spatial problem so that the curvature vanishes completely.
Therefore, in this paper, we will focus on the following problem:
\bea\label{SLP}
\begin{cases}
V=W(v),&\quad x\in \partial\Omega(t),\ {t>0},\\
v_t=g({\bf 1}_{\Omega(t)},v),&\quad x\in \R,\ {t>0}.
\end{cases}
\eea

In this paper, we always assume {\bf(A)}. In fact, we only require that $g_1g_3>g_2$ in our analysis.
Before studying the global dynamical behavior of \eqref{SLP},
it is necessary to establish the global existence and uniqueness of solutions
of \eqref{SLP} with suitable initial conditions, which will be the primary purpose of this paper.
The global dynamics issue is studied in \cite{CNW2021} as a companion paper.
The weak entire solutions are discussed in \cite{CNW2021-2}.

For initial data of \eqref{SLP}, we assume that
$\Omega(0)=\Omega_0$ contains finitely many disjoint bounded intervals. Namely,
\beaa
\Omega_0:={\bigcup_{j=1}^{m}(x_{2j-1}^0,x_{2j}^0)}\quad \mbox{for some $m\in\mathbb{N}$;}
\eeaa
while the initial function $v_0$ is assumed to be a bounded Lipschitz function defined in $\R$ { and $W(v_0)$ is non-zero on the boundary $\partial\Omega_0$.}
Then we will show that the solution exists in the classical sense until
interfaces collide with each other. The time that two interfaces collide is called an {\em annihilation time}.
This phenomenon has been discussed by Chen and Gao \cite{ChenGao}, who
studied the singular limit problem of the following problem
\bea\label{eq-ac1}
  \left\{
  \begin{array}{rcl}
u_t&=&\ep \Delta u+\dfrac{1}{\ep}(F(u)- v),\quad x\in\mathbb{R},\ t>0, \vspace{0.5mm}\\
v_t&=&G(u,v),\quad x\in\mathbb{R},\ t>0,
  \end{array}
  \right.
\eea
where
\beaa
F(u)=(\frac{3}{\sqrt[3]{2}}-2u^2)u,\quad G(u,v)=u-\gamma v-b\quad \mbox{for some $\gamma>0$ and $b\in\R$}.
\eeaa
For each $v\in(-1,1)$, the equation $F(u)-v=0$ has three real roots: $h_-(v)$,
$h_0(v)$ and $h_+(v)$ satisfying $h_-(v)< h_0(v)<h_+(v)$, where $h_{\pm}(v)$ are the stable
equilibria solution of the ODE $u_t=\ep^{-1}(F(u)- v)$.
The singular limit problem of \eqref{eq-ac1} is described by
\bea\label{SLPn0}
\begin{cases}
V=R(v),&\quad \mbox{in $\partial\Omega_{\pm}(t)$},\\
v_t=G^{\pm}(v):=G(h_{\pm}(v),v),&\quad \mbox{in $\Omega_{\pm}(t)$},
\end{cases}
\eea
where $\Omega_{\pm}(t)$ denotes the region on which $u^{\ep}$ converges to $\pm1$.

The notion of solutions of  \eqref{SLPn0} is extended
as follows: let $D$ be a closed domain in $\R\times [0,\infty)$,
$(v,Q^+,Q^-)$ is called a weak solution of  \eqref{SLPn0}  in $D$
if $(v,Q^+,Q^-)$ satisfies the following conditions:
\begin{enumerate}
\item
$v\in C^0(D)$ and $v_t\in L^{\infty}(D)$,
$v_t=G^{\pm}(v)$ in $Q^{\pm}$,
\item
$Q^{\pm}$ are open and disjoint such that $m(\Gamma)=0$, where $\Gamma:=D\backslash (Q^+\cup Q^-)$  and $m(\cdot)$ denotes the Lebesgue measure in $\R^2$,
\item
(Propagation)
If $B(x_0,r_0)\times \{t_0\}\subset Q^{\pm}$ and 
$\pm v<1$ in $B(x_0,r_0+c^{\pm}\delta)\times[t_0,t_0+\delta]\subset D$
for some $\delta>0$, then $B(x_0,r_0+c^{\pm}\delta)\times\{t_0+\delta\}\subset Q^{\pm}$, where
$$c^{\pm}:=\min_{t_0\le t\le t_0+\delta}\{\mp  R(v(x,t))\ |\ x\in \overline{B(x_0,r_0+\delta \sup_{-1<s<1}|R(s)|)} \},\quad
$$
\item
(Nucleation condition) $\{(x,t)\in D|\, \pm v>1\}\subset Q^{\mp}$.
\end{enumerate}

To avoid the confusion of our definition of weak solutions, it may be called a ``switching'' solution.
Based on this setting, they proved that \eqref{SLPn0} admits
a unique ``switching'' solution $(v,Q^+,Q^-)$ that satisfies $v(x,0)=v_0(x)$
with $\Omega_0$ consisting of a finite number of bounded intervals such that
\bea\label{nonzero c}
R(v_0(x))\ne 0\qquad \mbox{ for any }\,  x\in\partial \Omega(0).
\eea
Without the condition given by \eqref{nonzero c}, they also showed the ill-posedness of the problem \eqref{SLPn0}.
Similarly, our problem will be ill-posed with a similar condition (see {\bf (H2)} below).
We remark that \eqref{SLPn0} exhibits the nucleation phenomenon of interfaces and will not appear in our problem.
To study the continuation of solutions after the annihilation time for our problem,
one possible way to discuss weak solutions is to follow the idea of Chen and Gao \cite{ChenGao}. However,
we introduce a different way to define weak solutions,
which is more likely to follow a PDE approach.
We also refer to \cite{ChenXY,ChenXF92,GGI,HNM}
for theoretical works on the existence and uniqueness of solutions
with diffusion term appearing in the $v$-equation.

The rest of the paper is organized as follows. In Section~\ref{sec:results},
we introduce the notion of classical and weak solutions for our model, and state the main results. In Section~\ref{sec:solutions}, we establish the global existence and uniqueness of weak solutions.
Some tedious or straightforward proofs are provided in the Appendix.

\section{Main results}\label{sec:results}
\setcounter{equation}{0}

We consider the following initial value problem:
\bea\label{SLPini}
\begin{cases}
V=W(v):=a-bv,\qquad &x\in \partial\Omega(t),\ {t>0},\vspace{1mm}\\
v_t=g({\bf 1}_{\Omega(t)},v),\qquad &x\in \R,\ {t>0},\vspace{1mm}\\
\Omega(0)=\Omega_0,\quad v(x,0)=v_0(x),\qquad & {x\in\mathbb{R}}.
\end{cases}
\eea
We assume that $(\Omega_0,v_0)$ satisfies
\begin{itemize}
\item[{\bf(H1)}]  (Boundedness) $\Omega_0:={\bigcup_{i=1}^{m}(x_{i}^0,x_{i+1}^0)}$
for some $m\in\mathbb{N}$ and $x_{i}<x_{i+1}$ for $i=1,\cdots,2m-1$,
and $v_0\geq 0$ is a bounded Lipschitz function with
\be\label{ini-bounds}
M:=\|v_0\|_{L^{\infty}(\mathbb{R})}.
\ee
\item[{\bf(H2)}]  (Well-posedness)
$W(v_0(x))\ne 0$ for all $x\in\partial \Omega(0)$.
\end{itemize}

We note that condition {\bf(H2)} is similar to \eqref{nonzero c} used in \cite{ChenGao}. This condition
is used to guarantee the well-posedness of \eqref{SLPini}.
More precisely, the uniqueness of the initial value problem  \eqref{SLPini} may not hold without {\bf(H2)}.
See Remark~\ref{rk-H2} below for the details.

Hereafter, we always define $Q_T:=\mathbb{R}\times[0,T]$.
The definition of classical solutions is given as follows.

\begin{definition}\label{def-classical}
\noindent{\rm(i)} We say that $(\Omega,v)$ is a classical solution of \eqref{SLPini} for $0\leq t\leq T$ if there exist
$x_k\in C^1([0,T])$,  $k=1,...,2m$, and
\beaa
v\in  C(Q_T)\cap C^1\Big(\mathbb{R}\times(0,T]\setminus\{x=x_{k}(t),\ t\in[0,T],\ k=1,...,2m\}\Big)
\eeaa
such that $x_i(\cdot)<x_{i+1}(\cdot)$ in $[0,T]$ for $i=1,\cdots, 2m-1$, and
$\Omega:=\bigcup_{0\leq t\leq T} \left[\Omega(t)\times\{t\}\right]$,
where
\beaa
\Omega(t):=\bigcup_{j=1}^{m}(x_{2j-1}(t),x_{2j}(t)),
\eeaa
and the following equations hold pointwisely:
\bea
&&x'_k(t)=(-1)^{k}W\Big(v(x_{k}(t),t)\Big):=(-1)^{k}\Big(a-bv(x_k(t),t)\Big),\ 0\leq t\leq T,\ k=1,...,2m,\label{ode}\\
&&v_t=g({\bf 1}_{\Omega(t)},v)\quad \mbox{ in }Q_T,\label{pde}\\
&& x_k(0)=x_k^0,\quad v(x,0)=v_0(x).\label{ic}
\eea

\noindent{\rm(ii)}   $(\Omega,v)$ is called a classical solution of \eqref{SLPini} for $0\leq t< T$ if it is a classical solution
for $0\leq t\leq \tau$ for each $\tau\in(0,T)$.

\noindent{\rm(iii)}  $(\Omega,v)$ is called a classical solution of \eqref{SLP} for $\tau\leq t\leq T$ for some $\tau>0$ if {\rm (i)} holds
with $t=0$ replaced by $t=\tau$.

\noindent{\rm(iv)}  $(\Omega,v)$ is called a non-negative classical solution of \eqref{SLP} for $\tau\leq t\leq T$ for some $\tau>0$ if {\rm (iii)} holds and $v\ge 0$.
\end{definition}

Under {\bf(H1)} and {\bf(H2)}, we will establish the {\em local} existence of a classical solution to
problem \eqref{SLPini}, where each interface can be represented by a strictly monotone function of $t$.
The classical solution can be extended until an annihilation occurs and thus the notion of weak solutions is needed. Let us define the annihilation time $T_A$ depending on $(\Omega_0,v_0)$ by
\bea\label{TA}
\quad  T_A:=\sup\{\tau>0|\, x_i(t)<x_{i+1}(t)\ \mbox{$\forall$ $t\in[0,\tau)$ and $i=1,...,2m-1$}\}\in(0,\infty].
\eea
We see that the classical solution exists globally in time if $T_A=\infty$.

Next, we introduce the definition of weak solutions, which is different from the one given in \cite{ChenGao}.
Before we state the definition of weak solutions, we denote
the interior of $\Lambda$ in $\R$ (resp. $\R^2$) by ${\rm int}_\R\Lambda$ (resp. ${\rm int}_{\R^2}\,\Lambda$).
Define the space ${X_T}$ consisting of $(\Omega,v)$ that satisfies the following:
\begin{itemize}
\item[(1)] $v\in C(Q_T)$ and is Lipschitz continuous in $x$,
\item[(2)]  $\Omega\subset Q_T$, $\partial \Omega$ is Lipschitz,
\item[(3)] $v\ne a/b$ on $\overline {\bigcup_{0\le t\le T} \partial\Omega(t)\times\{t\} }$,
\item[(4)] $v(x,0)=v_0(x)$, $\Omega(0)=\Omega_0$,
\end{itemize}
where
\beaa
 \Omega(t):={\rm int}_{\R}\{x\in\R\ |\ (x,t)\in\overline{\Omega}\}.
\eeaa
We remark that $\overline {\bigcup_{0\le t\le T} \partial\Omega(t)\times\{t\} }$ represents the set of all interfaces in $[0,T]$ when it is a classical solution.
Since $\partial \Omega$ is Lipschitz, the unit outer normal vector ${\bf n}:=(n_1,n_2)$ to $\partial \Omega$ exists almost everywhere.

\begin{definition}\label{def-weak}
\noindent{\rm(i)} We say that a pair $(\Omega,v)\in {X_T}$ is a weak solution of \eqref{SLPini} for $0\leq t\leq T$ if
the following two conditions {\bf(C1)} and {\bf(C2)} hold:

{\rm{\bf(C1)}}\quad { For any $\varphi,\psi\in H^1((0,T);L^2(\R))$ and $\psi$ has a compact support in $Q_T$,}
\bea
&&\left[\int_{\R}{\bf 1}_{\Omega(t)}\varphi dx\right]_0^T
=
{\int_0^T \int_{\Omega(t)}\varphi_tdxdt+}
\int_{\partial \Omega\cap (\R\times(0,T))}W(v)\varphi |n_1|d\sigma,\label{eq-w1}\\
&&\left[\int_{\R}v\psi dx\right]_0^T
=
\int_0^T\int_{\R}\Big(v\psi_t+g({\bf 1}_{\Omega(t)},v)\psi\Big )dxdt\label{eq-w2}.
\eea

{\rm{\bf(C2)}}\quad If $B(x_0,r_0)\times\{t_0\}\subset \Omega$ (resp. $\subset\Omega^c:=Q_T\backslash\Omega$) for some $r_0>0$ and $t_0\in[0,T)$, then
there exists $\tau_0\in(0,T-t_0]$ depending only on $r_0$ 
such that
\beaa
\{x_0\}\times[t_0,t_0+\tau_0]\subset\Omega \ \mbox{(resp. $\subset\Omega^c$)}.
\eeaa

\noindent{\rm(ii)} We say that a pair $(\Omega,v)$ is a weak solution of \eqref{SLPini} for $0\leq t< T$ if
it is a weak solution for $0\leq t\leq \tau$ for all $\tau\in(0,T)$.
\end{definition}

Note that \eqref{SLPini} could be ill-posed without the condition {\bf(C2)}.
To prevent the nucleation of interfaces, we need an extra condition {\bf(C2)} similar to condition (iii) of { the weak solution in} \cite{ChenGao}.
If $(x_0,t_0)\in \Omega$, then $(x_0,t_0+\ep)\in \Omega$ for sufficiently small $\ep$ owing to the openness of $\Omega$. The above condition means that the slope of the interface $\partial\Omega$ has a positive lower bound in $(x,t)$ space.
Clearly, if any new interface generates from some time $\tau\in(0,T)$, we can choose $\tau_0\ll1$ and $t_0$ close to $\tau$ such that
$t_0+\tau_0>\tau>t_0$ and then
\beaa
\Big(\{x_0\}\times[t_0,t_0+\tau_0]\Big)\cap\Omega\neq\emptyset \mbox{\quad and\quad  } \Big(\{x_0\}\times[t_0,t_0+\tau_0]\Big)\cap\Omega^c\neq\emptyset,
\eeaa
which contradicts {\bf{\bf(C2)}}.

\begin{remark}\label{rem:weak-restricted}
Let $(\Omega,v)$ be a weak solution of \eqref{SLPini} for $0\leq t\leq T$.
Then it is also a weak solution for $0\leq t\leq \tau$ for any $\tau\in (0,T)$ (see Lemma~\ref{lem:wk-t}).
Moreover, for any $0\le t_1<t_2\le T$ and $-\infty\leq x_1<x_2\leq \infty$,
a similar argument used in the proof of Lemma~\ref{lem:wk-t}  given in the Appendix can imply
\beaa
&&\left[\int_{x_1}^{x_2}{\bf 1}_{\Omega(t)}\varphi dx\right]_{t_1}^{t_2}
=
{\int_{t_1}^{t_2} \int_{\Omega(t)\cap (x_1,x_2)}\varphi_tdxdt+}
\int_{\partial \Omega\cap ((x_1,x_2)\times(t_1,t_2))}W(v)\varphi |n_1|d\sigma,\label{eq-w1-2}\\
&&\left[\int_{\R}v\psi dx\right]_{t_1}^{t_2}
=
\int_{t_1}^{t_2}\int_{x_1}^{x_2}\Big(v\psi_t+g({\bf 1}_{\Omega(t)},v)\psi\Big )dxdt\label{eq-w2-2}
\eeaa
for any $\varphi,\psi\in H^1((t_1,t_2);L^2(x_1,x_2))$
with $\psi$ has a compact support if $|x_1-x_2|=\infty$.
\end{remark}




We now state the main results as follows.

\begin{theorem}\label{thm:local existence}
Assume {\bf(H1)} and {\bf(H2)}. Then
problem \eqref{SLPini} has a unique local in time
non-negative classical solution.
\end{theorem}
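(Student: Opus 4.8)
The plan is to recast \eqref{SLPini} as a fixed-point problem for the interface trajectories $\mathbf{x}=(x_1,\dots,x_{2m})$ and apply the Banach contraction principle on a short time interval. The crucial structural observation is that the $v$-equation \eqref{pde} contains no spatial derivatives: for each fixed $x$, the function $t\mapsto v(x,t)$ solves the scalar ODE $v_t=g(\mathbf{1}_{\Omega(t)}(x),v)$, in which the forcing switches between $g^+(v):=g(1,v)$ and $g^-(v):=g(0,v)$ at precisely the times when some interface $x_k(\cdot)$ crosses the level $x$. Thus, once the trajectories $\mathbf{x}$ are prescribed (so that $\Omega(t)$, and hence $\mathbf{1}_{\Omega(t)}$, are determined), $v$ is obtained by integrating a pointwise-in-$x$ ODE. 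One then feeds the resulting $v$ into the interface law \eqref{ode} to produce new trajectories, and the solution we seek is a fixed point of this two-step map.

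First I would fix the working set. Using {\bf(H2)} together with the continuity of $v_0$, the initial velocities $(-1)^kW(v_0(x_k^0))$ are nonzero, so I choose a short time $T$ and a closed subset $\mathcal{K}\subset C([0,T];\mathbb{R}^{2m})$ on which the $x_k$ retain their initial ordering with a uniform gap and on which each interface speed stays bounded away from zero. For $\mathbf{x}\in\mathcal{K}$ I solve the pointwise ODE to obtain $v$; here I record four properties: $v\ge 0$ (since $g^+(0)=g_1>0$ and $g^-(0)=0$, so the level $v=0$ is never crossed from above), $v$ is bounded on $[0,T]$ (both $g^+(v)\le g_1$ and $g^-(v)\le 0$ force $v_t\le g_1$, hence at most linear growth), $v$ is $C^1$ in each region where $\mathbf{1}_{\Omega(t)}$ is locally constant and continuous across interfaces, and, most importantly, $v$ is Lipschitz in $x$. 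The spatial Lipschitz bound follows because the crossing time $\sigma(x)$ at which a point $x$ changes phase is the inverse of a $C^1$ trajectory with non-vanishing derivative, hence Lipschitz in $x$; since $v$ depends on $x$ only through $v_0(x)$ and through $\sigma(x)$ via the smooth ODE flow, Lipschitz continuity in $x$ is inherited. I then solve \eqref{ode} with this $v$ to define the image trajectories, and verify via {\bf(H2)} and the a priori speed bounds that, after possibly shrinking $T$, the image again lies in $\mathcal{K}$.

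The contraction estimate is the heart of the argument. Given $\mathbf{x}^1,\mathbf{x}^2\in\mathcal{K}$ with associated fields $v^1,v^2$, I must first show $\|v^1-v^2\|_{L^\infty(Q_T)}\le C\,\|\mathbf{x}^1-\mathbf{x}^2\|_{C([0,T])}$: a perturbation $\delta$ of an interface shifts each crossing time by $O(\delta)$ (using the uniform lower bound on the speed), and during the shifted window the point feels $g^+$ instead of $g^-$, or vice versa, changing $v$ by an amount controlled by $|g^+-g^-|$ times the time shift. Second, comparing the two interface ODEs and using that $W$ is affine together with the spatial Lipschitz bound on $v^1$, a Gronwall estimate gives $\|\mathcal{T}\mathbf{x}^1-\mathcal{T}\mathbf{x}^2\|_{C([0,T])}\le b\,T\,e^{bLT}\,\|v^1-v^2\|_{L^\infty(Q_T)}$. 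Composing the two bounds yields a contraction factor of order $T$, so Banach's theorem furnishes a unique fixed point once $T$ is small; this fixed point is the desired non-negative classical solution, and its uniqueness is exactly the uniqueness of the fixed point. I expect the main obstacle to be the first Lipschitz estimate $\|v^1-v^2\|\lesssim\|\mathbf{x}^1-\mathbf{x}^2\|$, since the field $v$ has only a kink across the moving interfaces and the entire coupling is mediated by the phase-switching times; controlling how these switching times move under perturbation of the trajectories, and showing the resulting change in $v$ stays linear in the perturbation, is precisely where condition {\bf(H2)} (non-vanishing interface speed) is indispensable.
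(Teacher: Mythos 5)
Your argument is correct in outline, but it takes a genuinely different route from the paper. The paper does not set up a fixed-point iteration at all: for existence (Proposition~\ref{prop:local-existence}) it exploits the observation that, under {\bf(H2)}, each interface initially moves monotonically \emph{into} a region whose phase has not changed since $t=0$, so the value of $v$ at the interface is the explicit function $G_0\bigl(G_0^{-1}(v_0(x_k))+t\bigr)$ or $G_1\bigl(G_1^{-1}(v_0(x_k))+t\bigr)$ of $v_0(x_k)$ and $t$ alone; the interface law \eqref{ode} thereby \emph{decouples} into a scalar ODE with Lipschitz right-hand side (Lemma~\ref{lem:G0 Lip}), solvable by Picard--Lindel\"of, after which the interior values of $v$ are reconstructed through the arrival times $T_k$. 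Uniqueness is then proved separately (Lemma~\ref{lem:A}) by an arrival-time comparison: assuming two solutions' interfaces are strictly ordered forces the opposite ordering of $T_{2m}'$ and $\widetilde T_{2m}'$, a contradiction. Your contraction scheme replaces the decoupling trick by the stability estimate $\|v^1-v^2\|_\infty\le C\|\mathbf{x}^1-\mathbf{x}^2\|_\infty$, controlled through the shift of the phase-switching times; this is more standard and delivers existence and uniqueness in one stroke, at the price of exactly the estimate you identify as the main obstacle (which is indeed where {\bf(H2)} enters, just as the non-vanishing speed $\delta$ enters the paper's Lipschitz Lemmas~\ref{lem:lipschitz} and \ref{lem:lipschitz-appendix}). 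The paper's explicit $G_0$, $G_1$, arrival-time representation is not merely a convenience: it is reused throughout (the Lipschitz bounds, Proposition~\ref{prop:classical sol}, the Appendix), so the authors get more mileage from their construction. Two small points to tighten in your version: formulate $\mathcal{K}$ via a two-sided monotonicity bound such as $\delta|t-s|\le(-1)^k\mathrm{sgn}\bigl(W(v_0(x_k^0))\bigr)\bigl(x_k(t)-x_k(s)\bigr)\le\Delta|t-s|$ so that it is closed in $C([0,T];\R^{2m})$ (``speed bounded away from zero'' is not meaningful for merely continuous trajectories); and note explicitly that an arbitrary classical solution in the sense of Definition~\ref{def-classical} has, by {\bf(H2)} and continuity, trajectories lying in $\mathcal{K}$ for short time and a $v$ reproducible from them by the pointwise ODE, so that uniqueness of the fixed point really does yield uniqueness of classical solutions. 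Your non-negativity argument (invariance of $\{v\ge0\}$ under both fields since $g(1,0)=g_1>0$ and $g(0,0)=0$) is simpler than the paper's test-function proof of Lemma~\ref{lem-3.1} and is fine for classical solutions.
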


\begin{theorem}\label{thm:global weak sol}
Assume {\bf(H1)} and {\bf(H2)}. Then there is a unique global in time
weak solution to problem \eqref{SLPini}.
\end{theorem}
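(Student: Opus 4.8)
The plan is to construct the global weak solution by concatenating classical pieces across annihilation times, and to prove uniqueness by showing that every weak solution is forced, on each inter-annihilation interval, to coincide with the classical solution of Theorem~\ref{thm:local existence}. The engine for everything is a monotonicity lemma, valid under $g_1g_3>g_2$: inside $\Omega$ one has $v_t=g_1-g_2v/(g_3v+g_4)\ge g_1-g_2/g_3>0$, so $v$ strictly increases, while outside $v_t=-g_2v/(g_3v+g_4)\le 0$ for $v\ge 0$, so $v$ decreases. I will use this in three ways: $v$ stays non-negative and, on each $[0,T]$, bounded; the sign of $W(v)=a-bv$ along every interface is preserved, so no interface ever becomes stationary; and consequently each interval is either monotonically expanding ($v<a/b$, so $W>0$) or monotonically contracting ($v>a/b$, so $W<0$). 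The sign-preservation is precisely what keeps the pair in the class $X_T$ (condition~(3)) throughout.

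\textbf{Existence.} First I would verify that the classical solution of Theorem~\ref{thm:local existence} is a weak solution: \eqref{eq-w2} is immediate from $v_t=g({\bf 1}_{\Omega(t)},v)$, while \eqref{eq-w1} follows from the transport (Reynolds) identity for the smooth moving domain $\Omega(t)$, using $x_k'=(-1)^kW(v(x_k,t))$ from \eqref{ode} and the fact that $|n_1|\,d\sigma=dt$ along each interface curve $x=x_k(t)$; condition {\bf(C2)} follows from the slope bound $|x_k'|=|W(v)|\le a+b\|v\|_{L^\infty(Q_T)}$. I then continue this solution up to the annihilation time $T_A$ of \eqref{TA}: since $v_t$ is bounded and the $x_k$ have bounded speed, both $v(\cdot,t)$ and the $x_k(t)$ extend continuously to $t=T_A$, where at least one adjacent pair coincides. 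Discarding the colliding pair(s) gives a configuration $(\Omega(T_A),v(\cdot,T_A))$ with strictly fewer intervals; by the monotonicity lemma a colliding contracting (resp. gap-closing) pair has $v(x_\ast,T_A)>a/b$ (resp. $<a/b$), hence $W(v)\ne 0$ there, and the surviving interfaces retain their nonzero sign, so the reduced data again satisfy {\bf(H1)}--{\bf(H2)}. Reapplying Theorem~\ref{thm:local existence} restarts the solution. Because each annihilation removes at least two of the initial $2m$ interfaces, after at most $m$ restarts the process terminates, and concatenating the finitely many classical pieces—whose finitely many joining times have measure zero in $t$—yields a solution satisfying {\bf(C1)}--{\bf(C2)} on every $[0,T]$, i.e. a global weak solution.

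\textbf{Uniqueness.} Let $(\Omega,v)\in X_T$ be any weak solution and denote by $\tilde T$ the first time its number of interfaces changes. On $[0,\tilde T)$ condition~(3) of $X_T$ forces $n_1\ne 0$, so each component of $\partial\Omega$ is locally a Lipschitz graph $x=x_k(t)$; localizing \eqref{eq-w1} against suitable $\varphi$ recovers $x_k'(t)=(-1)^kW(v(x_k(t),t))$ a.e., and continuity of $v$ upgrades this to the $C^1$ interface law \eqref{ode}, while {\bf(C2)} excludes the nucleation of any new interface (exactly as in the remark following Definition~\ref{def-weak}). Hence on $[0,\tilde T)$ the pair $(\Omega,v)$ solves the coupled system \eqref{ode}--\eqref{ic}, and the uniqueness asserted in Theorem~\ref{thm:local existence} gives $(\Omega,v)=(\Omega_\ast,v_\ast)$, the constructed solution, there. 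By sign-preservation the interfaces of any weak solution collide precisely when the constructed ones do, so $\tilde T=T_A$ and $v(\cdot,T_A)=v_\ast(\cdot,T_A)$; an induction over the finitely many annihilation times then yields global uniqueness.

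\textbf{Main obstacle.} I expect the crux to be the uniqueness step of inverting the weak identity \eqref{eq-w1}: one must show that a merely Lipschitz $\partial\Omega$ carrying $W(v)\ne 0$ is genuinely a finite union of graphs to which \eqref{eq-w1} can be localized, and that {\bf(C2)} is strong enough to forbid all spurious nucleation. The second delicate point is the sign-preservation of $W(v)$ along interfaces, needed so that non-degeneracy (and hence {\bf(H2)}) survives the annihilation instants; this is subtle because the value of $v$ on a moving front is controlled only through the one-sided dynamics $G^{\pm}$ on either side, so the argument must be phrased through the inside/outside monotonicity rather than a direct derivative of $v$ along the interface.
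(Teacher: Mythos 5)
Your overall architecture matches the paper's: build the classical solution up to the annihilation time, check the restart data satisfy {\bf(H1)}--{\bf(H2)}, glue finitely many classical pieces into a weak solution (the paper's Propositions~\ref{prop:cw}, \ref{prop:global weak sol} and Lemma~\ref{lem:extend}), and prove uniqueness by showing that the weak formulation together with {\bf(C2)} forces the interface set to be a finite union of $C^1$ graphs obeying \eqref{ode} (the paper's Lemmas~\ref{lem:graph1}--\ref{lem:graph2}). The one real divergence in the uniqueness step is that, after recovering the graph structure, you invoke classical uniqueness, whereas the paper runs a direct Gronwall-type comparison of the two weak solutions with carefully chosen test functions (Lemmas~\ref{lem:Q} and \ref{lem:key}); either route is viable once the graph lemmas are in hand, and you correctly identify those lemmas as the crux of that half.

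The genuine gap is in the existence half: your claim that ``the sign of $W(v)=a-bv$ along every interface is preserved'' as a consequence of the one-sided monotonicity of $v$ ($v_t>0$ in $\Omega$, $v_t\le 0$ outside) does not follow. The value of $v$ \emph{along} a moving interface is $v(y,T_k(y))$ where $T_k$ is the arrival time; for, say, a contracting front ($W<0$, moving into $\Omega$) one only gets $v(y,T_k(y))\ge v_0(y)$, and since $v_0$ may dip below $a/b$ ahead of the front, monotonicity in $t$ alone does not prevent $v(x_k(t),t)$ from decreasing to $a/b$, i.e.\ the speed $|x_k'|$ from degenerating to zero before any collision. This is exactly the content of the paper's Proposition~\ref{prop:classical sol}, whose proof is the delicate step you are missing: assuming $W(v(x_k(\tau),\tau))=0$ at a first time $\tau$, one introduces the attaining time $\beta(x)$ at which $v(x,\cdot)$ crosses $a/b$, shows $\beta$ is Lipschitz (using the Lipschitz bound on $v(\cdot,\tau_\ep)$ from Lemma~\ref{lem:lipschitz}) and $\beta\le T_k$ with equality at the degenerate point, hence $T_k(y)-T_k(y-\ep')\le L\ep'$, contradicting $T_k'(y^-)=+\infty$ which follows from $x_k'(\tau)=0$. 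Without this argument you cannot guarantee $|x_k'|\ge\delta>0$ on $[0,T_A)$, hence neither that the solution stays in the class $X_T$ (condition (3)) nor that {\bf(H2)} holds for the post-annihilation restart data, and the whole concatenation scheme stalls. You flag this point as a ``delicate'' obstacle, but the proposal supplies no mechanism to overcome it.
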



\section{Classical solutions and weak solutions}\label{sec:solutions}
\setcounter{equation}{0}

We shall divide this section into two subsections.
In the first subsection, we study the local existence and uniqueness of classical solutions and
prove Theorem~\ref{thm:local existence}.
In the second subsection, we establish the global existence and uniqueness of weak solutions
(Theorem~\ref{thm:global weak sol}).
Some tedious proofs are put in the Appendix. Hereafter, {\bf(H1)} and {\bf(H2)} are always assumed.

\subsection{The local existence and uniqueness of classical solutions}

First, we deal with the local existence of solutions.

\begin{proposition}\label{prop:local-existence}
Problem \eqref{SLPini} has a local in time classical solution.
\end{proposition}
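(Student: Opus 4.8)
The plan is to reduce the coupled free boundary problem \eqref{SLPini} to a fixed-point problem for the interface positions $x_k(\cdot)$, and then invoke the contraction mapping principle on a short time interval. First I would observe that, given a candidate collection of interface curves $\{x_k(\cdot)\}_{k=1}^{2m}$ lying in $C([0,\tau])$ and respecting the ordering $x_i(0)=x_i^0 < x_{i+1}^0$, the set $\Omega(t)$ and hence the characteristic function ${\bf 1}_{\Omega(t)}$ are determined. Plugging this into the $v$-equation \eqref{pde}, namely $v_t = g({\bf 1}_{\Omega(t)},v)$, gives for each fixed $x$ an ordinary differential equation in $t$. Since $g(u,v) = g_1 u - g_2 v/(g_3 v + g_4)$ is smooth and Lipschitz in $v$ on the relevant range (using $v\ge 0$ and the a priori bound $M$ from \eqref{ini-bounds}), this ODE has a unique solution $v(x,t)$ with $v(x,0)=v_0(x)$, depending continuously on $x$ through both $v_0(x)$ and the switching times at which $x$ enters or leaves $\Omega(t)$. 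This defines a map $v = \mathcal{S}[\{x_k\}]$.

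Next I would close the loop by feeding this $v$ back into the interface ODEs \eqref{ode}, $x_k'(t) = (-1)^k W(v(x_k(t),t)) = (-1)^k(a - b\,v(x_k(t),t))$, with $x_k(0)=x_k^0$. Composing the two steps gives a map $\mathcal{T}$ on a closed ball in $\big(C([0,\tau])\big)^{2m}$ (centered at the constant curves $x_k\equiv x_k^0$), and a classical solution is exactly a fixed point of $\mathcal{T}$. To run the contraction argument I would verify: (a) $\mathcal{T}$ maps a suitable ball into itself for $\tau$ small, which follows because $W(v)$ is bounded on the range $[0,M]$, so each $x_k$ can move only $O(\tau)$; (b) $\mathcal{T}$ is a contraction for $\tau$ small, using that both $\mathcal{S}$ (dependence of the ODE solution on the driving curves) and the evaluation $v(x_k(t),t)$ are Lipschitz in the curves. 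Choosing $\tau$ small enough also preserves the strict ordering $x_i(\cdot)<x_{i+1}(\cdot)$ so that $\Omega(t)$ stays a disjoint union of $m$ open intervals throughout.

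The main obstacle is the regularity and Lipschitz-dependence of $v$ near the interfaces, because ${\bf 1}_{\Omega(t)}$ is discontinuous across each $x_k(t)$, so $v_t$ jumps there and $v$ is only $C^1$ away from the curves $\{x=x_k(t)\}$, exactly as required in Definition~\ref{def-classical}(i). The key device that makes this tractable is condition \textbf{(H2)}: since $W(v_0(x_k^0)) = a - b\,v_0(x_k^0) \ne 0$, the interface speed $x_k'(0)$ is nonzero, and by continuity $x_k(\cdot)$ is strictly monotone on $[0,\tau]$ for small $\tau$. Consequently each vertical line $\{x\}\times[0,\tau]$ crosses the moving interface at most once transversally, so the switching time for the ODE defining $v(x,\cdot)$ is a Lipschitz function of $x$ and of the driving curves; this is what yields the Lipschitz continuity of $v$ in $x$ and the estimates needed for the contraction. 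I would handle the transversal-crossing bookkeeping by working on each strip between consecutive interfaces separately and tracking the single crossing time via the implicit function theorem applied to $x - x_k(t) = 0$.

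Finally, having produced a fixed point, I would record that the resulting pair $(\Omega,v)$ satisfies \eqref{ode}--\eqref{ic} pointwise and has the claimed regularity, so it is a classical solution in the sense of Definition~\ref{def-classical}(i); non-negativity of $v$ and the refinement to Theorem~\ref{thm:local existence} (uniqueness) I would defer, since Proposition~\ref{prop:local-existence} asserts only existence.
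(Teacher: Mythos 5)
Your route is genuinely different from the paper's. The paper avoids any fixed-point iteration on the coupled system by exploiting a decoupling: under \textbf{(H2)} each interface initially moves strictly into one phase, so for small $t$ the point $(x_k(t),t)$ lies on the closure of a space-time region in which $v$ is given explicitly from the initial data alone, namely $v=G_0(G_0^{-1}(v_0(x))+t)$ (or the analogous $G_1$ formula), because $g(0,v)$ and $g(1,v)$ are integrable in closed form. The interface ODE then reads $x_k'=(-1)^kW\bigl(G_0(G_0^{-1}(v_0(x_k(t)))+t)\bigr)$, a scalar ODE with Lipschitz right-hand side (Lemma~\ref{lem:G0 Lip} plus the Lipschitz continuity of $v_0$), solved by Picard--Lindel\"of; afterwards $v$ inside the swept region is reconstructed via the arrival times $T_k$. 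Your contraction scheme buys robustness (it would survive a $g$ that is not explicitly integrable), while the paper's buys explicit formulas that are reused later (Lemmas~\ref{lem:lipschitz} and \ref{lem:lipschitz-appendix}, Lemma~\ref{lem:A}).

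There is, however, one genuine gap in your setup: you iterate on a closed ball in $\bigl(C([0,\tau])\bigr)^{2m}$ centered at the constant curves. For a merely continuous candidate curve, a vertical line $\{x\}\times[0,\tau]$ may cross it many times, the switching set of the ODE defining $v(x,\cdot)$ is uncontrolled, and the crossing time is not a Lipschitz (or even single-valued) function of the curve; your appeal to the implicit function theorem for $x-x_k(t)=0$ presupposes that the \emph{iterates} are $C^1$ with nonvanishing derivative, which \textbf{(H2)} guarantees only for the eventual solution, not for arbitrary elements of a $C^0$ ball. The fix is standard but must be stated: work instead in the set of curves $x_k(t)=x_k^0+\int_0^t u_k(s)\,ds$ with $u_k$ continuous and confined to a small neighborhood of $(-1)^kW(v_0(x_k^0))\neq 0$, so every iterate is strictly monotone with speed bounded below by some $\delta>0$; then each vertical line has a unique crossing time, Lipschitz in the curve with constant $1/\delta$, and the contraction constant comes out as $O(\tau/\delta)$, which closes the argument for small $\tau$. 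With that amendment your proof is correct.
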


The key point of the proof of Proposition~\ref{prop:local-existence}  is
the notion of the arrival time (cf.\cite{NW}), which is given as follows.

\begin{definition}\label{def:arrival time}
For a classical solution $(\Omega,v)$ and a strictly monotone interface $x=x_k(\cdot)$ for some $k\in\{1,...,2m\}$,
we say that $T_k(y)$ is the arrival time of the interface $x=x_k(t)$ to some given $y\in\R$
if $y=x_k(T_k(y))$. For convenience, we define $T_k(y):=0$ if $y\leq x_k(0)$ with $x_k'(0)>0$ or $y\geq x_k(0)$ with $x_k'(0)<0$.
\end{definition}

The arrival time $t=T_k(y)$ can be almost viewed as the inverse function of $t=x_k^{-1}(y)$. However,
in our definition, the arrival time can be always defined as $0$ in some $y$.
By the help of the arrival time, we can calculate $v(x,t)$ in terms of functions defined in \cite{CKN,CNT,NW}:
\bea\label{G func}
G_0^{-1}(v):=\displaystyle\int_M^{v}\dfrac{d\xi}{g(0,\xi)},\quad
G_1^{-1}(v):=\displaystyle\int_0^{v}\dfrac{d\xi}{g(1,\xi)},
\eea
where $M$ is given in \eqref{ini-bounds} below.
{ For the basic properties of $G_0$ and $G_1$, we state the following two lemmas.
\begin{lemma}[{\cite[Lemma 2.1]{NW}}]\label{lem-G}
The functions $G_0$ and $G_1$ defined in \eqref{G func} satisfy
\begin{itemize}
\item[{\rm (i)}] $\ 0\leq G_0(G_0^{-1}(s)+t)\leq s,\quad s,t\ge 0,$
\item[{\rm (ii)}] $\ \dfrac {d}{ds}G_0(G_0^{-1}(s)+t)=
\begin{cases}
\dfrac {g(0,G_0(G_0^{-1}(s)+t))}{g(0,s)},&s>0,\ t\ge 0,\\
e^{-g_2t/g_4},& s=0,\ t\ge 0,
\end{cases}$
\item[{\rm (iii)}] $\ \dfrac {d}{dt}G_1(G_1^{-1}(s)+t)=g(1,G_1(G_1^{-1}(s)+t)),\quad s,t\ge 0.$
\end{itemize}
\end{lemma}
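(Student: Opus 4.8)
The plan is to read the quantities $G_0(G_0^{-1}(s)+t)$ and $G_1(G_1^{-1}(s)+t)$ as the time-$t$ flows of the scalar ODEs $v_t=g(0,v)$ and $v_t=g(1,v)$ started from $v=s$ at $t=0$. Recalling $g(0,\xi)=-g_2\xi/(g_3\xi+g_4)$ and $g(1,\xi)=g_1-g_2\xi/(g_3\xi+g_4)$, I would first record the sign and monotonicity facts that drive everything: $g(0,\xi)<0$ for $\xi>0$ with $g(0,0)=0$, while $g(1,\xi)>0$ for all $\xi\ge 0$. The latter is exactly where $g_1g_3>g_2$ enters, since $\xi\mapsto g_2\xi/(g_3\xi+g_4)$ increases to the supremum $g_2/g_3<g_1$. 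Consequently $G_0^{-1}$ is strictly decreasing and $G_1^{-1}$ strictly increasing on their domains, so $G_0$ and $G_1$ are well-defined smooth inverses ($G_0$ decreasing, $G_1$ increasing), and the fundamental theorem of calculus gives $(G_0^{-1})'(w)=1/g(0,w)$ and $(G_1^{-1})'(w)=1/g(1,w)$.

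Granting these preliminaries, part (iii) and the case $s>0$ of (ii) are immediate from the inverse function theorem and the chain rule. For (iii), writing $V(t):=G_1(G_1^{-1}(s)+t)$ and differentiating in $t$ gives $V'(t)=G_1'(G_1^{-1}(s)+t)=g(1,V(t))$, using $G_1'=1/\big((G_1^{-1})'\circ G_1\big)=g(1,G_1)$. For (ii) with $s>0$, set $P(s,t):=G_0(G_0^{-1}(s)+t)$ and differentiate in $s$: the chain rule yields $\partial_s P=G_0'(G_0^{-1}(s)+t)\,(G_0^{-1})'(s)=g(0,P)/g(0,s)$, which is the claimed identity. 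Part (i) then follows from the monotonicity of $G_0$: for $s>0$ and $t\ge 0$ we have $G_0^{-1}(s)+t\ge G_0^{-1}(s)$, hence $0<P(s,t)\le G_0(G_0^{-1}(s))=s$, positivity of $P$ being clear since $G_0$ takes values in $(0,\infty)$; the degenerate case $s=0$ is covered by the convention $P(0,t)=0$ inherited from $\lim_{s\downarrow 0}G_0^{-1}(s)=+\infty$ and $G_0(+\infty)=0$.

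The genuinely delicate point is the boundary case $s=0$ of (ii), where both $g(0,P)$ and $g(0,s)$ in the $s>0$ formula vanish, so that expression is a $0/0$ indeterminacy. Here I would exploit that $1/g(0,\xi)=-(g_3\xi+g_4)/(g_2\xi)$ integrates explicitly. Evaluating $G_0^{-1}(P)-G_0^{-1}(s)=\int_s^P d\xi/g(0,\xi)=t$ and cancelling the common terms produces the clean relation
\[
\ln\frac{P}{s}=\frac{g_3}{g_4}(s-P)-\frac{g_2}{g_4}\,t,\qquad\text{equivalently}\qquad \frac{P(s,t)}{s}=\exp\!\Big(\tfrac{g_3}{g_4}\big(s-P(s,t)\big)-\tfrac{g_2}{g_4}\,t\Big).
\]
Since $P(0,t)=0$, the right derivative at $s=0$ is $\lim_{s\downarrow 0}P(s,t)/s$; and as $s\downarrow 0$ part (i) forces $0<P\le s\to 0$, so $s-P\to 0$ and the displayed identity gives $\partial_s P(0,t)=e^{-g_2t/g_4}$, as asserted.

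I expect this explicit logarithmic relation to be the crux of the argument. Without it, resolving the indeterminate limit at $s=0$ would require a separate linearization of $v_t=g(0,v)$ about the equilibrium $\xi=0$ (where $g_v(0,0)=-g_2/g_4$) together with a Gronwall-type control of the remainder to justify $P(s,t)\sim s\,e^{-g_2t/g_4}$; the closed form obtained from the partial-fraction integration bypasses this entirely, and as a by-product it also re-derives the bound $P\le s$ in (i) directly (if $P>s$ the left side of the relation is positive while the right side is nonpositive).
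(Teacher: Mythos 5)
The paper itself offers no proof of this lemma: it is imported verbatim from \cite[Lemma 2.1]{NW}, so there is no in-paper argument to compare against. Judged on its own terms, your proof is correct and complete. The preliminary sign facts are right ($g(0,\xi)<0$ for $\xi>0$ with the non-integrable singularity of $1/g(0,\xi)$ at $\xi=0$ forcing $G_0^{-1}(s)\to+\infty$ as $s\downarrow 0$, and $g(1,\xi)\geq g_1-g_2/g_3>0$ using the standing hypothesis), so $G_0$ and $G_1$ are globally defined smooth inverses and parts (i), (iii), and the $s>0$ case of (ii) follow from monotonicity and the chain rule exactly as you say. You correctly identify the only delicate point, the one-sided derivative at $s=0$, and your resolution via the explicit partial-fraction integration of $\int_s^P d\xi/g(0,\xi)=t$, giving $\ln(P/s)=\tfrac{g_3}{g_4}(s-P)-\tfrac{g_2}{g_4}t$, is clean: combined with $0<P\leq s$ from (i) it yields $P(s,t)/s\to e^{-g_2t/g_4}$, which is precisely the difference quotient at $s=0$ since $P(0,t)=0$. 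The signs in the logarithmic identity check out, and your side remark that the same identity re-proves $P\leq s$ is also correct. This is a legitimate, self-contained derivation of the cited result.
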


\begin{lemma}\label{lem:G0 Lip}
There holds that
\beaa
&&\Big|G_0\Big(G_0^{-1}(u)+t\Big)-G_0\Big(G_0^{-1}(v)+t\Big)\Big|\leq |u-v|,\quad u,v\geq0,\ t\geq0.
\eeaa
\end{lemma}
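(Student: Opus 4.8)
The plan is to read the quantity $\Phi(s,t):=G_0(G_0^{-1}(s)+t)$ as the time-$t$ value of the solution of the scalar ODE $w_t=g(0,w)$ issuing from $w(0)=s$; indeed, differentiating in $t$ and using $(G_0^{-1})'=1/g(0,\cdot)$ gives $\partial_t\Phi=g(0,\Phi)$ with $\Phi(s,0)=s$. From this viewpoint the asserted inequality is precisely the statement that this flow is non-expansive in its initial datum, which should hold because $\partial_w g(0,w)=-g_2g_4/(g_3w+g_4)^2<0$, i.e. the vector field $g(0,\cdot)$ is strictly decreasing. Rather than appeal to general flow theory, I would prove the bound by controlling the $s$-derivative of $\Phi$ directly through Lemma~\ref{lem-G} and then integrating.

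The main step is to show $0\le \partial_s\Phi(s,t)\le 1$ for $s>0$ and $t\ge 0$. By Lemma~\ref{lem-G}(ii), $\partial_s\Phi=g(0,\Phi)/g(0,s)$; substituting $g(0,\xi)=-g_2\xi/(g_3\xi+g_4)$ and simplifying turns this into $\partial_s\Phi=\Phi(g_3 s+g_4)\big/\big(s(g_3\Phi+g_4)\big)$. The decisive input is Lemma~\ref{lem-G}(i), which gives $0\le\Phi(s,t)\le s$: the lower bound makes both numerator and denominator nonnegative (so $\partial_s\Phi\ge 0$), while after cancelling the common term $g_3 s\Phi$ the inequality $\partial_s\Phi\le 1$ reduces, since $g_4>0$, to exactly $\Phi\le s$. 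Thus the desired two-sided bound on $\partial_s\Phi$ is equivalent to the already-established squeeze $0\le\Phi\le s$.

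With this in hand, for $0<v\le u$ I would write $\Phi(u,t)-\Phi(v,t)=\int_v^u\partial_s\Phi(s,t)\,ds$ (the integrand being continuous on $[v,u]\subset(0,\infty)$ by (ii)) and conclude $0\le\Phi(u,t)-\Phi(v,t)\le u-v$, which is the claim. The only point requiring care is the boundary case $s=0$, where $g(0,0)=0$ makes the ratio in Lemma~\ref{lem-G}(ii) degenerate and $G_0^{-1}(0)=+\infty$; here Lemma~\ref{lem-G}(i) again saves the argument, since $0\le\Phi(0,t)\le 0$ forces $\Phi(0,t)=0$ and the same lemma yields $\Phi(u,t)\le u$ directly, settling the case $v=0$ (alternatively by continuity letting $v\downarrow 0$). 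I expect this degenerate endpoint to be the only delicate issue; the analytic substance is already contained in Lemma~\ref{lem-G}(i), so the proof is essentially a one-line differentiation followed by integration.
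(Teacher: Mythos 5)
Your proposal is correct and follows essentially the same route as the paper: the paper likewise derives $\bigl|\tfrac{d}{ds}G_0(G_0^{-1}(s)+t)\bigr|\le 1$ from Lemma~\ref{lem-G}~(i) and (ii) and concludes by the mean value theorem, while you merely make explicit the algebra showing that the bound on the derivative is equivalent to the squeeze $0\le G_0(G_0^{-1}(s)+t)\le s$. Your extra care at the degenerate endpoint $s=0$ is consistent with (and slightly more detailed than) the paper's treatment, which handles that case via the explicit value $e^{-g_2 t/g_4}\in(0,1]$ in Lemma~\ref{lem-G}~(ii).
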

\begin{proof}
Lemma~\ref{lem-G} (i) and (ii) imply that
\beaa
\Big|\dfrac {d}{ds}G_0(G_0^{-1}(s)+t)\Big|\leq1,\quad s,t\ge 0.
\eeaa
This lemma immediately follows from the above fact and the mean value theorem.
\end{proof}
}
\begin{proof}[Proof of Proposition~\ref{prop:local-existence}]
To simplify the proof, we only consider $m=1$, i.e., $\Omega_0=(x_1^0,x_2^0)$. The following process can apply to $m>1$ with some simple modifications but tedious details.
Because of {\bf(H2)}, we can divide our discussion into four cases:
\begin{itemize}
\item[(1)]  $W(v_0(x_{1}^0))>0$ and $W(v_0(x_{2}^0))>0$,
\item[(2)]  $W(v_0(x_{1}^0))>0$ and $W(v_0(x_{2}^0))<0$,
\item[(3)]  $W(v_0(x_{1}^0))<0$ and $W(v_0(x_{2}^0))>0$,
\item[(4)]  $W(v_0(x_{1}^0))<0$ and $W(v_0(x_{2}^0))<0$.
\end{itemize}

For the case (1), first we assume in advance that $x_{k}(t)$ exists $(k=1,2)$.
By the continuity of $v$ and $W$, we see that $x_1'(t)<0$ and $x_2'(t)>0$ for $t\in[0,\tau)$ for some $\tau>0$ sufficiently small.
By \eqref{SLPini}, we have
$v_t=g(0,v)$ for $0<t<\tau$ and $x\in(-\infty,x_1(t)]\cup [x_2(t),\infty)$.
By dividing both  sides by $g(0,v)$ and integrating it over $[0,t]$, we can easily solve $v$ as
\[
v(x,t)=G_0(G_0^{-1}(v_0(x))+t)\quad \mbox{for $0<t<\tau$ and $x\in(-\infty,x_1(t)]\cup [x_2(t),\infty)$},
\]
where $G_0^{-1}$ is defined in \eqref{G func}. Thus \eqref{ode} reduces to
\[
\dfrac {dx_{k}}{dt}=(-1)^k W(G_0(G_0^{-1}(v_0(x_{k}(t)))+t)),\quad x_k(0)=x_k^0.
\]
With the help of Lemma~\ref{lem:G0 Lip} and the Lipschitz continuity of $v_0$, the above initial value problem allows
 us to define the position of $x_{k}$ $(k=1,2)$ uniquely for all small $t\in[0,\tau')$ for some $\tau'<\tau$.
Finally,  for $(x,t)\in(x_1(t),x_2(t))\times(0,\tau')$, $v$ can be solved by integrating $v_t/g(1,v)=1$. Namely,
\beaa
v(x,t)=\begin{cases}
   G_1\left(G_1^{-1}(v_0(x))+t\right),&\ x_1^0\leq x \leq x_2^0,\ t\in(0,\tau'),\\
   G_1\left(G_1^{-1}(v(x,T_1(x)))+t-T_1(x)\right),&\ x_1(t)<x<x_1^0,\ t\in(0,\tau'),\\
   G_1\left(G_1^{-1}(v(x,T_2(x)))+t-T_2(x)\right),&\ x_2^0<x<x_2(t),\ t\in(0,\tau'),
   \end{cases}
\eeaa
where $T_k(x)$ is the arrival time of $x_k(t)$ to $x$.

Hence we obtain the local existence and 
of a classical solution of \eqref{SLPini} for the case (1).
The similar process can apply to cases (2), (3) and (4) respectively as well as the case where $m\ge 2$. We omit the details.
This completes the proof.
\end{proof}

Next, we deal with the uniqueness and continuation
of solutions. To extend the local in time solution uniquely, we need the Lipschitz continuity of $v$. For this, we prepare several lemmas.

\begin{lemma}\label{lem:lipschitz}
Let $(\Omega,v)$ be a classical solution of \eqref{SLPini} for $0\leq t\leq T$.
Furthermore, assume that there exists $\delta>0$ such that
\bea\label{phi-monotone}
|{x}_{k}'(t)|\geq \delta\quad \mbox{for  $0\leq t\leq T$ and $k=1,...,2m$}.
\eea
Then $v$ is Lipschitz continuous on $Q_T:=\mathbb{R}\times[0,T]$,
where the Lipschitz constant depends on $\delta$.
\end{lemma}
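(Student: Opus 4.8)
The plan is to separate the Lipschitz bound into its temporal and spatial parts and control each on its own, with the spatial part carrying essentially all the difficulty and being the only place where the speed bound \eqref{phi-monotone} enters.

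First I would dispose of continuity in $t$. Since $v$ takes values in $[0,\infty)$ (as the representation in Proposition~\ref{prop:local-existence} together with Lemma~\ref{lem-G}(i) keeps it non-negative), the pointwise identity \eqref{pde} gives $v_t=g(\mathbf 1_{\Omega(t)},v)$, and both $g(0,\cdot)$ and $g(1,\cdot)$ are bounded on $[0,\infty)$: indeed $|g(i,v)|\le g_1+g_2/g_3=:C_0$ for $v\ge 0$, $i=0,1$. Hence $|v_t|\le C_0$ on $Q_T$ with $C_0$ independent of $\delta$, so $|v(x,t)-v(x,s)|\le C_0|t-s|$. It therefore suffices to produce a bound $|v(x,t)-v(x',t)|\le L|x-x'|$ uniform in $t\in[0,T]$ with $L=L(\delta)$; the two estimates then combine to the asserted Lipschitz bound on $Q_T$.

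For the spatial estimate I would exploit the explicit representation of $v$ from the proof of Proposition~\ref{prop:local-existence}, propagated along the orbit of a fixed vertical line. Fix $t$ and a point $x$ that is not an interface point at time $t$. Because $|x_k'|\ge\delta$, each interface $x_k(\cdot)$ is strictly monotone, so it crosses $\{x\}\times[0,t]$ at most once; consequently $x$ undergoes at most $2m$ phase switches, at the arrival times $T_k(x)$ (Definition~\ref{def:arrival time}) of the interfaces sweeping past it. Writing $S^0_\tau$ and $S^1_\tau$ for the time-$\tau$ solution operators of $\dot v=g(0,v)$ and $\dot v=g(1,v)$, the value $v(x,t)$ is the composition of these flows over the successive phase intervals, started from $v_0(x)$ and switched at the times $T_k(x)$. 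The three ingredients I need are: (a) $S^0_\tau$ is non-expansive, which is exactly Lemma~\ref{lem:G0 Lip}; (b) $S^1_\tau$ is Lipschitz with constant $e^{K\tau}\le e^{KT}$, where $K=\sup_{v\ge0}|g_v(1,v)|=g_2/g_4<\infty$; and (c) each arrival time $T_k$ is Lipschitz in $x$ with constant at most $1/\delta$, since $T_k$ inverts $x_k$ and $T_k'(y)=1/x_k'(T_k(y))$ with $|x_k'|\ge\delta$. Ingredient (c) is the sole source of the $\delta$-dependence.

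Differentiating the composed representation by the chain rule (using that $v_0$ is Lipschitz, hence differentiable a.e., and that $T_k$ is Lipschitz) yields, at a.e.\ $x$, a bound $|v_x(x,t)|\le e^{KT}\big(L_0+2m\,C_0/\delta\big)=:L$, where $L_0:=\mathrm{Lip}(v_0)$: the term $C_0/\delta$ arises at each switch, where the $x$-variation $T_k'\le 1/\delta$ of the switching time multiplies the flow speed $|g|\le C_0$, and there are at most $2m$ switches, each later $\Omega$-segment contributing a factor $\le e^{KT}$ by (b) while the $\Omega^c$-segments contribute $1$ by (a). Since $v(\cdot,t)$ is continuous across the finitely many interface points (the value has no jump there) and $C^1$ in between, it is absolutely continuous in $x$ with $v_x\in L^\infty$, so $|v(x,t)-v(x',t)|\le L|x-x'|$. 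Combined with the temporal bound, this gives the lemma with $L$ depending on $\delta$ exactly as claimed.

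The main obstacle I anticipate is organizational rather than analytic: carefully cataloguing the possible crossing histories of a vertical line and checking that the composed representation—and in particular the chain-rule differentiation—stays valid uniformly across all of them, including for pairs $x,x'$ whose histories differ (for instance when an interface endpoint lies strictly between them). A clean way to absorb this bookkeeping is to record the jump of $v_x$ across an interface directly: differentiating the interface identity $v(x_k(t),t)=v^\ast$ along the curve gives the Rankine--Hugoniot relation $[v_x]\,x_k'(t)+[v_t]=0$, whence $|[v_x]|=|g(0,v^\ast)-g(1,v^\ast)|/|x_k'(t)|\le C_0/\delta$; propagating this bounded jump forward by the bounded linearized flow $(v_x)_t=g_v(\mathbf 1_{\Omega},v)\,v_x$ reproduces the same bound for $L$ and makes transparent why the lower speed bound \eqref{phi-monotone} is indispensable.
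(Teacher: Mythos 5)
Your argument is correct and rests on exactly the same ingredients as the paper's proof: the explicit representation of $v$ as composed flows of $\dot v=g(0,v)$ and $\dot v=g(1,v)$ switched at the arrival times, the non-expansiveness of the $g(0,\cdot)$-flow (Lemma~\ref{lem:G0 Lip}), the Lipschitz bound on the $g(1,\cdot)$-flow (the paper's $\|G_1'\|_{L^\infty}\|(G_1^{-1})'\|_{L^\infty}$ factor), and $|T_k'|\le 1/\delta$ from \eqref{phi-monotone} as the sole source of the $\delta$-dependence. The only difference is bookkeeping: the paper slices $[0,T]$ into subintervals on which each spatial region meets a single interface and runs a two-point estimate split at the crossing point ($J_1+J_2$ in Lemma~\ref{lem:lipschitz-appendix}), whereas you compose all $2m$ switches at once and bound $v_x$ almost everywhere.
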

\begin{proof}
The proof is involved because
each point $x$ may be passed through by several interfaces during a period of time.
We simply separate $Q_T$ into finitely many adjacent closed regions such that at most one interface can pass through any points and then show Lipschitz continuity on each closed region.

Recall the definition of classical solutions, we have ${x}_{k}(t)<{x}_{k+1}(t)$ for $t\in[0,T]$ and ${x}_k\in C^1([0,T])$ for $k=1,...,2m-1$.
Hence we can take $A=\min_{t\in[0,T]}{x}_1(t)$ and $B=\max_{t\in[0,T]}{x}_{2m}(t)$ such that
$\Omega\subset [A,B]\times[0,T]$.
In other words, we have
\beaa
v_t=g(0,v) \quad \mbox{in $D_T:=Q_T\setminus ([A,B]\times[0,T])$},
\eeaa
which gives (see the proof of Proposition~\ref{prop:local-existence})
\bea\label{v form-G0}
v(x,t)=G_0\Big(G_0^{-1}(v_0(x))+t\Big),\quad (x,t)\in D_T.
\eea

We now show that
\bea\label{Lip-outside}
\mbox{$v$ is Lipschitz continuous on $D_T$.}
\eea
Clearly, $v$ is Lipschitz continuous in $t$. From \eqref{v form-G0}  and {\bf (H1)}, we can use Lemma~\ref{lem:G0 Lip} and the Lipschitz continuity of $v_0$ to assert
\beaa
|v(x,t)-v(\bar{x},t)|\leq |v_0(x)-v_0(\bar{x})|\leq L_0|x-\bar{x}|
\eeaa
for all $(x,t),(\bar{x},t)\in D_T$ and for some $L_0>0$. Hence \eqref{Lip-outside} follows.

Next, we show that
\bea\label{Lip-inside}
\mbox{$v$ is Lipschitz continuous on $[A,B]\times[0,T]$.}
\eea
To simplify our discussion, we write $[0,T]=[0,\tau]\cup[\tau,2\tau]\cup\cdots\cup[(N-1)\tau, T]$, where $\tau:=T/N$ for some $N\in\mathbb{N}$ large enough such that
\bea\label{non-overlapping}
{x}_j([(n-1)\tau,n\tau])\cap {x}_k([(n-1)\tau,n\tau])=\emptyset\quad \mbox{for all $n=1,...,N$ and $j\neq k$.}
\eea
To prove \eqref{Lip-inside}, it suffices to show
\bea\label{Lip-inside-tau}
\mbox{$v$ is Lipschitz continuous on $[A,B]\times[(n-1)\tau,n\tau]$ for $n=1,...,N$.}
\eea
By Lemma~\ref{lem:lipschitz-appendix}, we see that \eqref{Lip-inside-tau} follows for $n=1$ with the Lipschitz constant depending on $\delta$.
Repeating the same argument used in the proof of Lemma~\ref{lem:lipschitz-appendix}, we obtain \eqref{Lip-inside-tau} and then
\eqref{Lip-inside} holds. Together with \eqref{Lip-outside}, we thus complete the proof of Lemma~\ref{lem:lipschitz}.
\end{proof}

{
\begin{lemma}\label{lem:lipschitz-appendix}
Under the same hypothesis to Lemma~\ref{lem:lipschitz},
$v$ is Lipschitz continuous on $[A,B]\times[0,\tau]$,
where $A$, $B$ and $\tau$ are defined in the proof of Lemma~\ref{lem:lipschitz}.
\end{lemma}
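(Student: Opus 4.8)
The plan is to prove the two one-directional Lipschitz estimates separately---uniform Lipschitz continuity in $t$ along each vertical line, and uniform Lipschitz continuity in $x$ along each horizontal slice---and then to combine them by the triangle inequality into a joint bound on $[A,B]\times[0,\tau]$. The geometric input that makes this feasible is the non-overlapping property \eqref{non-overlapping}: on $[0,\tau]$ the interface curves $x=x_k(t)$ have pairwise disjoint ranges and, being $C^1$ with $|x_k'|\ge\delta$, each is strictly monotone of constant sign. Consequently every vertical segment $\{x\}\times[0,\tau]$ meets at most one interface, the phase at a point switches at most once, and the finitely many monotone arcs cut $[A,B]\times[0,\tau]$ into finitely many closed regions on each of which ${\bf 1}_{\Omega(t)}$ is constant and $v$ is given by an explicit flow formula as in the proof of Proposition~\ref{prop:local-existence}.

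For the Lipschitz bound in $t$, I would first observe that $v$ is bounded on $[A,B]\times[0,\tau]$: the representations $v=G_0(G_0^{-1}(v_0)+\sigma)$ and $v=G_1(G_1^{-1}(\cdot)+\sigma)$ together with $0\le\sigma\le\tau$ and $0\le v_0\le M$ keep $v$ in a fixed bounded interval $[0,M_1]$. On each vertical line, off the single crossing time, $v$ is $C^1$ in $t$ with $|v_t|=|g({\bf 1}_{\Omega(t)},v)|\le C_1:=\max_{0\le s\le M_1}\{|g(0,s)|,|g(1,s)|\}$; since $v\in C(Q_T)$, this pointwise bound upgrades to $|v(x,t)-v(x,\bar t)|\le C_1|t-\bar t|$.

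For the Lipschitz bound in $x$ at fixed $t$, the two ingredients are as follows. First, by \eqref{phi-monotone} each $x_k$ is a $C^1$ bijection onto its range with $|x_k'|\ge\delta$, so its inverse---and hence the arrival time $T_k$ of Definition~\ref{def:arrival time}, extended by the value $0$ off the range---is Lipschitz with constant $1/\delta$; this is precisely where $\delta$ enters. Second, the phase flows $s\mapsto G_0(G_0^{-1}(s)+\sigma)$ and $s\mapsto G_1(G_1^{-1}(s)+\sigma)$ are Lipschitz in the seed $s$, uniformly for $\sigma\in[0,\tau]$: for $G_0$ this is Lemma~\ref{lem:G0 Lip} (constant $1$), while for $G_1$ the identity in Lemma~\ref{lem-G}(iii) exhibits the flow of the autonomous ODE $\dot v=g(1,v)$, whose $C^1$ right-hand side yields a Gr\"onwall constant $e^{L_1\tau}$, with $L_1$ the Lipschitz constant of $g(1,\cdot)$ on $[0,M_1]$; both flows are also Lipschitz in $\sigma$ with constant $C_1$. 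On a region where no interface has yet passed, $v(\cdot,t)$ is one such flow composed with the Lipschitz datum $v_0$ (constant $L_0$). On a region entered after a crossing by some $x_k$, the formula reads $v(x,t)=G_1\big(G_1^{-1}(w(x))+t-T_k(x)\big)$ (or the analogue with $G_0,G_1$ interchanged), where the seed $w(x)=v(x,T_k(x))=G_0\big(G_0^{-1}(v_0(x))+T_k(x)\big)$ is itself Lipschitz in $x$, being a composition of the flow with $v_0$ and with $T_k$. In either case $v(\cdot,t)$ is a finite composition of Lipschitz maps, hence Lipschitz with one uniform constant $L'=L'(L_0,\delta,\tau,C_1,L_1)$.

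Finally I would glue. Since $v\in C(Q_T)$, the slice $v(\cdot,t)$ is continuous on $[A,B]$ and, by the previous step, $L'$-Lipschitz on each of the finitely many closed phase subintervals whose union is $[A,B]$; a continuous function that is $L'$-Lipschitz on each of finitely many intervals tiling $[A,B]$ is $L'$-Lipschitz on all of $[A,B]$ (insert the interface positions at time $t$ as breakpoints and telescope, using continuity at the junctions). Combining this with the $t$-estimate gives, for any $(x,t),(\bar x,\bar t)\in[A,B]\times[0,\tau]$, the joint bound $|v(x,t)-v(\bar x,\bar t)|\le L'|x-\bar x|+C_1|t-\bar t|$, proving the claim with a constant depending on $\delta$. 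The main obstacle is the bookkeeping of the third step: correctly identifying the explicit representation on each phase region and, above all, verifying that the seed value $v(x,T_k(x))$ carried across the interface stays Lipschitz and that the composed constant remains uniform over all regions---the factor $1/\delta$ coming from the arrival time being exactly the source of the asserted $\delta$-dependence.
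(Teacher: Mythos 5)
Your argument is correct and follows essentially the same route as the paper's: both rest on the non-overlapping decomposition \eqref{non-overlapping} into interface tubes, the explicit $G_0$/$G_1$ flow representations from Proposition~\ref{prop:local-existence}, Lemma~\ref{lem:G0 Lip} together with the boundedness of $\tfrac{d}{ds}G_1(G_1^{-1}(s)+\sigma)$, and the $1/\delta$ Lipschitz bound on the arrival time $T_k$ coming from \eqref{phi-monotone}. The only difference is organizational: the paper estimates $|v(x,t)-v(y,t)|$ across an interface directly by inserting the intermediate point $z$ with $T_k(z)=t$ and splitting into the two terms $J_1+J_2$, whereas you prove a uniform bound on each phase region and glue at the finitely many breakpoints using continuity (and treat the $t$-direction separately) --- an equivalent piece of bookkeeping.
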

\begin{lemma}\label{lem:A}
The local in time classical solution of problem \eqref{SLPini} is unique. Moreover,
the solution can be extended until $x_k'$ vanishes at some time for some $k$ or an annihilation occurs.
\end{lemma}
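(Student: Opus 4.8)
The plan is to prove the two assertions of Lemma~\ref{lem:A} separately, since uniqueness and continuation rely on different mechanisms. For uniqueness, suppose $(\Omega,v)$ and $(\tilde\Omega,\tilde v)$ are two classical solutions of \eqref{SLPini} on a common interval $[0,T']$ with the same initial data, with interfaces $x_k(\cdot)$ and $\tilde x_k(\cdot)$. By \eqref{phi-monotone}-type reasoning (which holds on a short initial interval by continuity of $v$ and the hypothesis \textbf{(H2)} that $W(v_0)\neq 0$ on $\partial\Omega_0$), each interface is strictly monotone, so by Lemma~\ref{lem:lipschitz} both $v$ and $\tilde v$ are Lipschitz on $Q_{T'}$. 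I would set $\rho(t):=\sum_{k=1}^{2m}|x_k(t)-\tilde x_k(t)|$ and estimate $\rho$ via a Gronwall argument: from \eqref{ode} the difference of the two ODE systems gives
\[
\frac{d}{dt}|x_k(t)-\tilde x_k(t)|\leq b\,\big|v(x_k(t),t)-\tilde v(\tilde x_k(t),t)\big|,
\]
so the core is to bound the right-hand side by a multiple of $\rho(t)$ plus a term measuring $\|v(\cdot,t)-\tilde v(\cdot,t)\|_\infty$.

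The coupling between the interface positions and the field $v$ is what makes this delicate. The key observation is that, away from the interfaces, both $v$ and $\tilde v$ solve $v_t=g(0,v)$ (or $v_t=g(1,v)$) with the same initial data, and by the representation formulas from the proof of Proposition~\ref{prop:local-existence}, $v(x,t)$ at a point is determined by the arrival time $T_k(x)$ of the nearest interface that swept past $x$. Hence the only source of discrepancy between $v$ and $\tilde v$ at a fixed $(x,t)$ is the difference in the arrival times, which in turn is controlled by $\rho$. Using the Lipschitz bound on $G_0,G_1$ (Lemma~\ref{lem:G0 Lip} and Lemma~\ref{lem-G}(iii)) together with the uniform lower bound $|x_k'|\geq\delta$ on the interface speeds, one converts a spatial displacement $|x_k(t)-\tilde x_k(t)|$ into a time displacement of arrival times bounded by $\delta^{-1}\rho(t)$, and thence into a bound $\|v(\cdot,t)-\tilde v(\cdot,t)\|_\infty\leq C(\delta)\,\rho(t)$. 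Substituting back yields $\rho'(t)\leq C\rho(t)$ with $\rho(0)=0$, so $\rho\equiv 0$ by Gronwall; the interfaces coincide, and then the representation formulas force $v=\tilde v$. This closes uniqueness on a maximal short interval, and a standard continuation/connectedness argument (the set of times up to which the two solutions agree is nonempty, closed, and open) extends it to all of $[0,T']$.

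For the continuation statement, I would argue that the solution can be prolonged as long as two conditions persist: no interface speed vanishes, i.e.\ $W(v(x_k(t),t))\neq 0$, and no annihilation $x_i(t)=x_{i+1}(t)$ occurs. Define $T^*$ as the supremum of times up to which a classical solution exists. If $T^*<\infty$ and neither obstruction occurs as $t\uparrow T^*$, then there is a uniform $\delta>0$ with $|x_k'|\geq\delta$ on $[0,T^*)$, so by Lemma~\ref{lem:lipschitz} the field $v$ extends Lipschitz-continuously up to $t=T^*$, and the interfaces $x_k$ extend to $C^1$ functions on $[0,T^*]$ with $x_i(T^*)<x_{i+1}(T^*)$ and $W(v(x_k(T^*),T^*))\neq 0$. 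The limiting configuration $(\Omega(T^*),v(\cdot,T^*))$ then satisfies \textbf{(H1)} and \textbf{(H2)} again, so Proposition~\ref{prop:local-existence} restarts the solution past $T^*$, contradicting maximality. Therefore at $t=T^*$ either some $x_k'$ vanishes or an annihilation occurs.

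The main obstacle I expect is the $v$-difference estimate $\|v(\cdot,t)-\tilde v(\cdot,t)\|_\infty\leq C(\delta)\rho(t)$: because a single spatial point may be traversed by different interfaces at different times in the two solutions, one must carefully track which formula applies and show that the bookkeeping of arrival times is stable under perturbation of the interface trajectories. This is exactly the situation that forced the region-by-region decomposition in Lemma~\ref{lem:lipschitz}, and the same partition of $Q_{T'}$ into pieces crossed by at most one interface should be reused here so that on each piece the comparison reduces to the monotone, invertible case where $T_k$ depends Lipschitz-continuously on the trajectory.
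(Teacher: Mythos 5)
Your continuation argument is essentially the paper's (which is in fact far terser there, disposing of it with "a bootstrap argument"), but your uniqueness proof takes a genuinely different route. The paper uses no Gronwall estimate at all: writing $T_{2m}$ and $\widetilde T_{2m}$ for the arrival times of the rightmost interfaces of the two solutions, it supposes toward a contradiction that $x_{2m}(t)>\widetilde x_{2m}(t)$ on $(0,\tau_0)$, which forces $T_{2m}(\xi)<\widetilde T_{2m}(\xi)$; since $v=\widetilde v$ at $(\xi,T_{2m}(\xi))$ (both fields obey $v_t=g(0,v)$ there up to that time) and $\widetilde v(\xi,\cdot)$ is non-increasing on $[T_{2m}(\xi),\widetilde T_{2m}(\xi)]$ because $g(0,v)\le 0$, the arrival-time equations $T_{2m}'=1/(a-bv(\xi,T_{2m}(\xi)))$ and $\widetilde T_{2m}'=1/(a-b\widetilde v(\xi,\widetilde T_{2m}(\xi)))$ yield the pointwise inequality $\widetilde T_{2m}'<T_{2m}'$, and integrating from $x_{2m}^0$ contradicts the assumed ordering. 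That comparison argument buys independence from any quantitative coupling estimate between $\|v-\widetilde v\|_\infty$ and the interface discrepancy, at the price of exploiting the sign structure of $g(0,\cdot)$, treating the interfaces one at a time, and having to justify the "WLOG ordered" reduction. Your Gronwall scheme on $\rho(t)=\sum_k|x_k(t)-\widetilde x_k(t)|$ is more robust and closer to standard ODE well-posedness theory, and the coupling estimate you need does hold for exactly the reason you give (arrival-time displacement controlled by $\delta^{-1}$ times interface displacement, then Lemma~\ref{lem:G0 Lip} and Lemma~\ref{lem-G}); both are legitimate proofs.

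One technical repair to your scheme: the inequality $\|v(\cdot,t)-\widetilde v(\cdot,t)\|_\infty\le C(\delta)\,\rho(t)$ cannot hold with $\rho$ evaluated only at the current time. The value $v(x,t)$ at a swept point is determined through the arrival time $T_k(x)$, and the discrepancy $|T_k(x)-\widetilde T_k(x)|$ is controlled by the interface discrepancy near time $T_k(x)\le t$, not at time $t$; a priori the trajectories could have been far apart earlier and coincide at time $t$. The correct statement is $\|v(\cdot,t)-\widetilde v(\cdot,t)\|_\infty\le C(\delta)\sup_{0\le s\le t}\rho(s)$, which still closes the argument since the running supremum of $\rho$ satisfies the same integral inequality and Gronwall applies to it. With that adjustment, and the bookkeeping you already flag (restricting to a time interval on which each point is crossed by at most one interface, as in Lemma~\ref{lem:lipschitz}), your proof goes through.
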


The proofs of the above lemmas are put in the Appendix.}

\begin{proposition}\label{prop:classical sol}
The classical solution of problem \eqref{SLPini} can be extended uniquely until an annihilation occurs.
Moreover,
$x_k(t)$ is strictly monotone in $[0,T_A)$, where $T_A$ is defined in \eqref{TA}. If $T_A<\infty$,
there exists a positive constant $\delta$ such that $|x_k'(t)|\geq \delta$ for all $t\in[0,T_A)$ and $k=1,...,2m$.
\end{proposition}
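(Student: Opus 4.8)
The plan is to exploit the dichotomy in Lemma~\ref{lem:A}: the unique local classical solution extends until either some $x_k'$ vanishes or an annihilation occurs. Hence \emph{uniqueness} is automatic, and it suffices to rule out the first alternative on $[0,T_A)$, i.e.\ to show that
\[
W\big(v(x_k(t),t)\big)=a-b\,v(x_k(t),t)
\]
never vanishes there; by \eqref{ode} this is exactly strict monotonicity of each $x_k$, its sign being fixed at $t=0$ through {\bf(H2)}. Write $v^{*}:=a/b$, the unique zero of $W$. Since $a,b>0$ we have $v^{*}>0$, and from the explicit form of $g$ together with $g_1g_3>g_2$ one checks the two structural inequalities
\[
g(0,v^{*})=-\frac{g_2 v^{*}}{g_3 v^{*}+g_4}<0<g_1-\frac{g_2 v^{*}}{g_3 v^{*}+g_4}=g(1,v^{*}),
\]
which drive the whole argument: the interior flow pushes $v$ \emph{up} and the exterior flow pushes $v$ \emph{down}, both \emph{away} from $v^{*}$.

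First I would prove that $W(v(x_k(\cdot),\cdot))$ keeps a constant sign, by contradiction. Let $t_0\in(0,T_A)$ be the first time at which $V(t):=v(x_k(t),t)$ reaches $v^{*}$ for some $k$, so on $[0,t_0)$ the interface $x_k$ is strictly monotone and $W(V)$ has a fixed sign. The value carried along a moving interface is governed by the phase it is entering: an \emph{expanding} interface ($W(V)>0$, i.e.\ $V<v^{*}$) has just been in the exterior phase and its value evolves by $g(0,\cdot)$, while a \emph{contracting} interface ($W(V)<0$, i.e.\ $V>v^{*}$) has just been in the interior phase and evolves by $g(1,\cdot)$. Using the $G_0,G_1$ representation from the proof of Proposition~\ref{prop:local-existence} (with Lemma~\ref{lem-G} and Lemma~\ref{lem:G0 Lip}) I would compute the one-sided derivative and obtain the identity
\[
V'(t)=g(\iota,V(t))+\theta(t)\,W(V(t)),\qquad \iota\in\{0,1\},\quad |\theta(t)|\le C,
\]
where the bounded factor $\theta$ collects the (uniformly bounded) derivatives of $G_0,G_1$ and the Lipschitz constant of $v_0$. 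Since $x_k'(t)=\pm W(V(t))\to0$ as $t\to t_0^-$, the right side is squeezed and $V'(t_0^-)=g(\iota,v^{*})$, which is negative in the expanding case and positive in the contracting case. This contradicts $V$ approaching $v^{*}$ monotonically from the correct side: a mean value argument on $[t,t_0]$ produces $\xi_n\to t_0^-$ with $V'(\xi_n)$ of the opposite sign. Hence no such $t_0$ exists, and by Lemma~\ref{lem:A} the solution extends uniquely up to $T_A$ with every $x_k$ strictly monotone.

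For the uniform bound when $T_A<\infty$ I would upgrade the same identity to a quantitative repulsion estimate. It gives $|V'(t)-g(\iota,V(t))|\le C\,|W(V(t))|=Cb\,|V(t)-v^{*}|$, so by the structural inequalities there is $\ep_0>0$ such that $V'(t)<-\tfrac12|g(0,v^{*})|$ whenever $V(t)\in[v^{*}-\ep_0,v^{*}]$ (expanding case) and $V'(t)>\tfrac12\,g(1,v^{*})$ whenever $V(t)\in[v^{*},v^{*}+\ep_0]$ (contracting case). Thus $V$ is repelled from the strip about $v^{*}$: it can never cross $v^{*}\mp\ep_0$ toward $v^{*}$, whence $|V(t)-v^{*}|\ge\min\{\ep_0,\,|v_0(x_k^0)-v^{*}|\}$ on $[0,T_A)$, the second quantity being positive by {\bf(H2)}. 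Setting $\delta:=b\cdot\min_k\min\{\ep_0,\,|v_0(x_k^0)-v^{*}|\}>0$ then yields $|x_k'(t)|=|W(V(t))|\ge\delta$ on $[0,T_A)$ for every $k$, as claimed.

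The hard part is making $|\theta|\le C$ rigorous \emph{uniformly up to $T_A$} without circularity. Two dangers arise for $m\ge2$: first, one is tempted to bound $\theta$ through the Lipschitz constant of $v$ from Lemma~\ref{lem:lipschitz}, which itself presupposes a lower bound on $|x_k'|$; second, once a point has been swept by several interfaces the representation of $V$ becomes a composition whose differentiation produces arrival-time derivatives of the form $1/x_j'(\cdot)$. I would avoid the first difficulty by expressing $\theta$ directly through $v_0$ (whose Lipschitz constant is given) and the uniformly bounded derivatives of $G_0,G_1$, rather than through an a~priori Lipschitz bound for $v$. The potentially singular factors $1/x_j'$ involve only the \emph{other} finitely many interfaces, and—crucially—every such term is multiplied by the vanishing factor $x_k'(t)=\pm W(V(t))$, so it does not spoil the limit $V'(t_0^-)=g(\iota,v^{*})$ nor the repulsion estimate. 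The clean bookkeeping—a simultaneous bootstrap establishing the lower bound $\delta$ for all $2m$ interfaces at once, ordered by the times at which they sweep common points—is where the tedious details reside, and I would relegate it to the Appendix in the spirit of Lemma~\ref{lem:lipschitz-appendix}.
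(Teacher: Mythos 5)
Your route is genuinely different from the paper's. The paper never writes an ODE for the interface value $V(t):=v(x_k(t),t)$; instead it works in the $(x,t)$-plane: it introduces the attaining time $\beta(x)$ of the level set $\{v=a/b\}$ inside the region the interface is entering, proves $\beta$ is Lipschitz by integrating $v_t=g(1,v)$ from a time $\tau_\ep$ strictly before the degeneracy time (where Lemma~\ref{lem:lipschitz} applies with a positive, though not uniform, $\delta$), and then uses $\beta\le T_k$ with equality at the touching point $y$ to get $0\le T_k(y)-T_k(y-\ep')\le L\ep'$, contradicting $T_k'(y^-)=+\infty$. Both arguments ultimately rest on the same structural signs $g(0,a/b)<0<g(1,a/b)$, and your repulsion estimate, if completed, would deliver the uniform $\delta$ somewhat more directly than the paper's two-case analysis. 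Your choice of the ``entering'' side for $\iota$ is also the right one: from the other side $v_x$ blows up like $g_1/|W(V)|$ and the identity is vacuous.

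The gap sits exactly where you flag it, but your proposed repair does not work as stated. In $V'=g(\iota,V)+\theta\,W(V)$ one has $\theta(t)=v_x$ on the entering side; if a point $x$ there was previously swept by another interface $x_j$ at time $T_j(x)$, differentiating $v(x,t)=G_\iota\bigl(G_\iota^{-1}(v(x,T_j(x)))+t-T_j(x)\bigr)$ in $x$ produces the term $-\dfrac{g_1\,g(\iota,v(x,t))}{g(\iota,v(x,T_j(x)))}\,T_j'(x)$ with $T_j'(x)=1/x_j'(T_j(x))$, whose coefficient is bounded away from zero. This factor sits \emph{inside} $\theta$; only the whole of $\theta$ is multiplied by $W(V_k(t))$, so the dangerous contribution to $\theta\,W(V)$ behaves like $W(V_k(t))/W\bigl(V_j(T_j(x_k(t)))\bigr)$, and the claim that the vanishing of $x_k'$ alone kills it is false unless $x_j'$ is bounded below at the sweep times. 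The correct reason these factors are harmless at a first degeneracy time $t_0<T_A$ is different: since no annihilation occurs at $t_0$, the last sweep time of points just ahead of $x_k(t)$ converges to some $s_1<t_0$; on $[0,s_1+\eta]\subset[0,t_0)$ all speeds are bounded below by compactness, so $v(\cdot,s_1+\eta)$ is Lipschitz by Lemma~\ref{lem:lipschitz} and $\theta$ is bounded for $t$ near $t_0$. In other words, you cannot avoid routing the bound through a Lipschitz constant of $v$ at an intermediate time; ``expressing $\theta$ directly through $v_0$'' is only possible for points that have never been swept.

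The same issue is more serious for the uniform bound $|x_k'|\ge\delta$ on $[0,T_A)$: your repulsion estimate needs $|\theta|\le C$ uniformly up to $T_A$, yet $\ep_0$ shrinks as $C$ grows while $C$ (through Lemma~\ref{lem:lipschitz}) grows as $\delta$ shrinks, so the ``simultaneous bootstrap'' does not visibly close. What does work is the paper's Case~2 device: if $W(V_k(t_j))\to0$ along $t_j\uparrow T_A$, then by the fixed sign and monotone limits $W(V_k(T_A^-))=0$; choose $\tau_1<T_A$ after the last pre-$T_A$ sweep of a neighbourhood of $x_k(T_A^-)$ on the entering side, bound $\theta$ on $(\tau_1,T_A)$ by the Lipschitz constant of $v(\cdot,\tau_1)$, and run your sign argument there. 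With these repairs your proof is sound; without them the key limit $V'(t)\to g(\iota,a/b)$ is not justified.
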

\begin{proof}
By Lemma \ref{lem:A}, the classical solution can be extended uniquely
until $x_k'(\tau_1)=0$\linebreak
(or $W(v(x_k(\tau_1),\tau_1))=0$) for some $\tau_1>\tau_0$ (non-uniqueness will occur) or
$x_k$ intersects $x_{k+1}$ for some $k$ at some time (an annihilation occurs).

We now prove the existence of $\delta$  when $T_A<\infty$ by using a contradiction argument. Assume that there exist an increasing sequence $\{t_j\}$ and
some $k$ such that $W(v(x_k(t_j)),t_j)\to 0$ as $j\to\infty$. Then we can divide our discussion into two cases:
\beaa
\mbox{{\bf Case 1}:\quad $t_j\uparrow \tau $ for some $\tau\in(0,T_A)$ as $j\to\infty$},
\quad\mbox{{\bf Case 2}:\quad  $t_j\uparrow T_A$ as $j\to\infty$.}
\eeaa

We now consider {\bf Case 1}. In this case, we can assume that there exists $k\in\{1,...,2m\}$ such that
\beaa
&&W(v(x_k(\tau),\tau))=0,\quad W(v(x_k(t),t))\neq0 \quad \mbox{for $t\in[0,\tau)$}.
\eeaa
We shall divide our discussion into four subcases:
\begin{itemize}
\item[(1-a)] $k$ is odd and $W(v_0(x_k^0))<0$,
\item[(1-b)] $k$ is odd and $W(v_0(x_k^0))>0$,
\item[(1-c)] $k$ is even and $W(v_0(x_k^0))<0$,
\item[(1-d)] $k$ is even and $W(v_0(x_k^0))>0$.
\end{itemize}

First we consider the subcase (1-a). By the definition of $W$ and $\Omega$, we have
\bea\label{contra}
v(x_k(t),t)>\frac{a}{b},\quad \mbox{and $x_k'(t)>0$ for $t\in[0,\tau)$}.
\eea
Hence the arrival time $t=T_k(x)$ is well-defined for $x\in(x_k^0,y)$ with $y:=x_k(\tau)$ and $\tau=T_k(y)$.
Now we take $\varepsilon>0$ sufficiently small and define
\beaa
\tau_\ep:=T_k(y-\varepsilon)<T_k(y)=\tau,\quad
D_{\varepsilon}:=\{(x,t)|\, x_k(t)< x \leq y,\ \tau_\ep\leq  t< \tau\}.
\eeaa
Note that we can choose $\varepsilon>0$ sufficiently small such that
$D_{\varepsilon}\subset \Omega$ (excitation region),
which means
\bea\label{monotone}
\mbox{$v(x,t)$ is strictly increasing in $t$ for all $(x,t)\in D_{\varepsilon}$}.
\eea
In particular,
\beaa
v(y,t)\uparrow \frac{a}{b}\quad  \mbox{as $t\uparrow \tau$ for $\tau_\ep\leq  t< \tau$}.
\eeaa
It follows that $v(y,\tau_\ep)<a/b$. On the other hand, by \eqref{contra}, we have $v(y-\ep,\tau_\ep)>a/b$.
By the continuity of $v$, there exists $x_0\in (y-\ep,y)$ such that
$v(x_0,\tau_\ep)$ must attain at $a/b$. Hence
we can define the following point in $(y-\ep,y)$:
\[
x_\ep:=\sup\Big\{x\in (y-\ep,y)\ \Big|\ v(x,\tau_\ep)=\dfrac ab\Big\}.
\]
Because of \eqref{monotone},
we can introduce the notion of the attaining time $\beta(x)\leq T_k(x)$ satisfying
\[
v(x,\beta(x))=\dfrac ab \quad \mbox{for each $x\in[x_\ep,y)$.}
\]
See Figure~\ref{fig:2}.
Then we can show that $\beta$ is Lipschitz on $[x_\varepsilon,y]$.
To do so, using $v_t/g(1,v)=1$ in $D_{\varepsilon}$ and integrating it over $[\tau_\ep, \beta(x)]$ give
\beaa
\int_{v(x,\tau_\ep)}^{a/b}\frac{ds}{g(1,s)}=\beta(x)-\tau_\ep.
\eeaa
Since $v(\cdot,\tau_\ep)$ is Lipschitz on $[x_\varepsilon,y]$ (because of Lemma~\ref{lem:lipschitz}),
we see that $\beta(\cdot)$ is Lipschitz on $[x_\varepsilon,y]$ with the Lipschitz constant, say $L$.
Thus using $T_k(y)=\beta(y)$ and $T_k(x)>\beta(x)$ for $x\in[x_\ep,y)$, we have
\bea\label{L-ep'}
0\le T_k(y)-T_k(y-\ep')\le\beta(y)-\beta(y-\ep')\le L\ep'
\eea
for any small $\ep'\in(0,\ep)$.
On the other hand, since $x_k'(\tau)=0$ and $x_k'(t)>0$ for $0\leq t<\tau$, we have $T'_k(y^-)=+\infty$.
This reaches a contradiction with \eqref{L-ep'}.
Hence we have shown the existence of $\delta$ for the subcase (1-a).
The argument used in the proof of { (1-a)}
can apply to subcases (1-b), (1-c), (1-d). We omit the details.

Next, we deal with {\bf Case 2}. Because of {\bf Case 1}, there exists $k\in\{1,...,2m\}$ such that
\beaa
&&W(v(x_k(T_A^-),T_A^-))=0,\quad W(v(x_k(t),t))\neq0 \quad \mbox{for $t\in[0,T_A)$}.
\eeaa
Since each interface $x=x_k(t)$ is monotone in $t$ and bounded for $[0,T_A)$ because of $T_A<\infty$,
$x_k(T_A^-)$ exists and is finite, then $\lim_{t\to T_A^-} v(x,t)$ exists and is finite for each $x\in\mathbb{R}$, which means that
$v$ can be extended continuously to $t=T_A$ as in the proof of Proposition~\ref{prop:local-existence}.
Hence we can define the arrival time $T_k(\cdot)$ on $(x_k^0,x_k(T_A^-)]$ for each $k$.
This allows us to use the same argument as in {\bf Case 1} with $\tau$ replaced by $T_A$ to complete the proof of {\bf Case 2}.

Finally, note that if $T_A=\infty$, from the argument of {\bf Case 1}, we see that $x'_k$ never vanishes in $[0,\infty)$
and then $x_k$ is strictly monotone.
Hence the proof of Proposition~\ref{prop:classical sol} is complete.
\end{proof}


\begin{figure}
  \centering\includegraphics[width=0.4\textwidth]{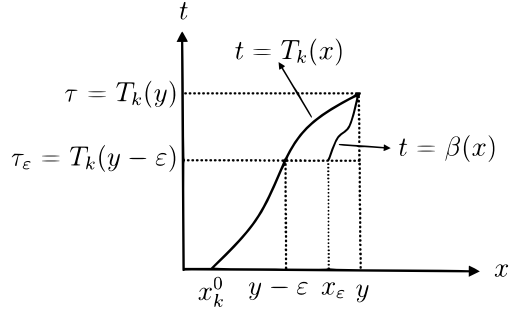}
\caption{A diagram of $t=\beta(x)$.}
\label{fig:2}       
\end{figure}


{
Next we show that a classical solution becomes a weak one.
Since $x_i(t)$ is monotone in time for each $i$, { we} can define
\beaa
\lim_{t\to T^-}{\Omega}(t):={\bigcup_{j=1}^{m}\Big(\lim_{t\to T^-}x_{2j-1}(t),\lim_{t\to T^-}x_{2j}(t)\Big)},
\eeaa
when ${\Omega}(t)={\bigcup_{j=1}^{m}(x_{2j-1}(t),x_{2j}(t))}$.

\begin{proposition}\label{prop:cw}
Let $({\Omega},{v})$ be a classical solution of \eqref{SLPini} for $0\leq t< T_A$ with $T_A<\infty$. Then
$(\widetilde{\Omega},\widetilde{v})$ is a weak solution for $0\leq t\leq T_A$, where
\bea
&&\widetilde{\Omega}(t):= \begin{cases}
                \Omega(t)\qquad &\mbox{for $t\in[0,T_A)$},\\
                 {\rm int}_{\mathbb{R}}\Big(\lim_{t\to T_A^-} \overline{\Omega(t)}\Big)\qquad &\mbox{for $t=T_A$},
                 \end{cases}\label{new omega}\\
&&\widetilde{v}(\cdot,t):=\begin{cases}
                v(\cdot,t)\qquad &\mbox{for $t\in[0,T_A)$},\\
                \lim_{t\to T_A^-} v(\cdot,t)\qquad &\mbox{for $t=T_A$}.
                 \end{cases}\label{new v}
\eea
In particular, {\bf(H1)} and {\bf(H2)} holds with $(\Omega_0,v_0)$ replaced by $(\widetilde{\Omega}(T_A),\widetilde{v}(x,T_A))$.
Hence there exists a unique classical solution with initial time $t=T_A$ and the solution can be extended until the next annihilation occurs.
\end{proposition}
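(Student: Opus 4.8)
\smallskip
The plan is to extend $(\Omega,v)$ continuously up to $t=T_A$ using the uniform bounds of Proposition~\ref{prop:classical sol}, to check that the extended pair lies in $X_{T_A}$ and satisfies {\bf(C1)}--{\bf(C2)} by passing to the limit $\tau\uparrow T_A$ in the corresponding statements on $[0,\tau]$, and finally to read off {\bf(H1)}--{\bf(H2)} at $t=T_A$ from the geometry of the annihilation. First I would note that each $x_k$ is strictly monotone on $[0,T_A)$ with $\delta\le|x_k'(t)|=|W(v(x_k(t),t))|\le\bar V$, where $\bar V:=\max_{0\le s\le M}|W(s)|$; hence $x_k$ is Lipschitz in $t$ and $x_k(T_A^-)$ exists and is finite. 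Since the Lipschitz constant of $v$ produced by Lemma~\ref{lem:lipschitz} depends only on $\delta$ (and $M$), it is independent of $\tau<T_A$, so $v$ is uniformly Lipschitz on $\R\times[0,\tau]$ for every $\tau<T_A$; therefore the limit $\widetilde v$ in \eqref{new v} exists, $\widetilde v\in C(Q_{T_A})$, and $\widetilde v(\cdot,T_A)$ is Lipschitz in $x$. This yields conditions (1) and (4) of $X_{T_A}$. For (2), the lateral part of $\partial\widetilde\Omega$ is the union of the graphs $t\mapsto x_k(t)$, each Lipschitz in $t$ with slope at most $\bar V$; an annihilation merely makes two such graphs meet at a single point, so $\partial\widetilde\Omega$ is Lipschitz and the outer normal exists almost everywhere. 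Finally, the lower bound $|x_k'(t)|\ge\delta$ gives $|a-bv(x_k(t),t)|\ge\delta$ along every interface, and passing to the limit shows $\widetilde v\ne a/b$ on $\overline{\bigcup_{0\le t\le T_A}\partial\widetilde\Omega(t)\times\{t\}}$, which is condition (3).

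\smallskip
Next I would verify {\bf(C1)}. For a classical solution, \eqref{eq-w1} is exactly the space--time divergence theorem for the moving region $\Omega$: parametrising each interface by $x=x_k(t)$, one has $d\sigma=\sqrt{1+(x_k')^2}\,dt$ and $|n_1|\,d\sigma=dt$, while the kinematic relation $x_k'(t)=(-1)^kW(v(x_k(t),t))$ turns the flux term $-\varphi n_2\,d\sigma$ into $W(v)\varphi|n_1|\,d\sigma$; similarly \eqref{eq-w2} follows from $v_t=g(\mathbf 1_{\Omega(t)},v)$. I would record these identities on $[0,\tau]$ for $\tau<T_A$ and then let $\tau\uparrow T_A$. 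The bulk integrals converge by dominated convergence since their integrands are bounded; the boundary integral converges because $W(v)\varphi$ is bounded and the interfaces have finite space--time length up to $T_A$; and the time-slice terms converge because $\mathbf 1_{\Omega(\tau)}\to\mathbf 1_{\widetilde\Omega(T_A)}$ in $L^1_{\mathrm{loc}}$ (the endpoints $x_k(\tau)$ converge, so the symmetric difference has vanishing measure) together with the uniform convergence of $v(\cdot,\tau)$ to $\widetilde v(\cdot,T_A)$ on compacts. This establishes {\bf(C1)} on $[0,T_A]$.

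\smallskip
For {\bf(C2)}, if $B(x_0,r_0)\times\{t_0\}\subset\widetilde\Omega$ then $x_0$ lies at distance at least $r_0$ from every interface at time $t_0$; since each interface moves with speed at most $\bar V$, none can reach $x_0$ before time $t_0+r_0/\bar V$, so $\tau_0:=\min\{r_0/(2\bar V),\,T_A-t_0\}$ works, and the complementary case $\subset\widetilde\Omega^c$ is identical. Thus $(\widetilde\Omega,\widetilde v)$ is a weak solution on $[0,T_A]$. It remains to check {\bf(H1)}--{\bf(H2)} at $t=T_A$. The number of interfaces does not increase, so $\widetilde\Omega(T_A)$ is again a finite union of disjoint bounded open intervals, and the nonnegativity and boundedness of $\widetilde v(\cdot,T_A)$ pass to the limit; this is {\bf(H1)}. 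The decisive point for {\bf(H2)} is that the interfaces taking part in the annihilation drop out of $\partial\widetilde\Omega(T_A)$: when two adjacent excited intervals merge, the common point $x_{2j}(T_A)=x_{2j+1}(T_A)$ lies in the interior of the merged interval, and when an excited interval collapses, the common point $x_{2j-1}(T_A)=x_{2j}(T_A)$ lies in the interior of the complement. Every surviving interface satisfies $|W(\widetilde v(x_k(T_A),T_A))|=\lim_{t\uparrow T_A}|x_k'(t)|\ge\delta>0$, which is precisely {\bf(H2)}. With {\bf(H1)}--{\bf(H2)} in force, Theorem~\ref{thm:local existence} together with Proposition~\ref{prop:classical sol} restarts a unique classical solution from $t=T_A$ that continues until the next annihilation.

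\smallskip
The main obstacle is the limit analysis at $t=T_A$, and in particular identifying the geometry of $\widetilde\Omega(T_A)$ so as to guarantee {\bf(H2)}. One must be certain that the colliding interfaces become genuine interior points rather than new boundary points at which $W$ could vanish, and that distinct annihilation events do not conspire to create such a boundary point; the uniform lower bound $|x_k'|\ge\delta$ from Proposition~\ref{prop:classical sol} is exactly what rules this out, since it forbids any surviving interface from having zero speed as $t\uparrow T_A$. A secondary technical point is justifying that the boundary integral in \eqref{eq-w1} passes to the limit up to the annihilation time, for which the finite length of the Lipschitz interfaces and the boundedness of $W(v)\varphi$ suffice.
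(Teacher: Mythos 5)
Your proposal is correct and follows essentially the same route as the paper's proof: extend $(\Omega,v)$ continuously to $t=T_A$ using the monotonicity and the uniform bound $|x_k'|\ge\delta$ from Proposition~\ref{prop:classical sol}, verify membership in $X_{T_A}$ together with {\bf(C1)}--{\bf(C2)}, and read off {\bf(H1)}--{\bf(H2)} at $T_A$ to restart the classical solution. You simply fill in several steps the paper dismisses as ``clear'' (the divergence-theorem verification of \eqref{eq-w1}, the passage to the limit $\tau\uparrow T_A$, and the explicit choice of $\tau_0$ in {\bf(C2)}), which is a faithful elaboration rather than a different argument.
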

\begin{proof}
Recall that ${\Omega}(t):={\bigcup_{j=1}^{m}(x_{2j-1}(t),x_{2j}(t))}$. Since $x_k(\cdot)$ is monotone and bounded for each $k$, we see that $x_k(T_A^-)$ exists and thus $\widetilde{\Omega}(T_A)$ is well-defined.
Since $g({\bf 1}_{\Omega(t)},v)\le g_1$,
$\widetilde{v}(\cdot,T_A)$ is bounded in $\mathbb{R}$.
 Note that, for each fixed $x$, there exists small $\epsilon>0$ such that
$\widetilde{v}(x,t)$ is monotone in $t$ for $t\in(T_A-\epsilon, T_A)$. Thus,
$\widetilde{v}(\cdot,T_A):=\lim_{t\to T_A^-} v(\cdot,t)$ is well-defined. Moreover, $\widetilde{v}$ is Lipschitz in $x$ by
using the proof of Proposition~\ref{prop:classical sol} and Lemma~\ref{lem:lipschitz}.
Also, from the definition of \eqref{new omega},
we see that $\widetilde{\Omega}(t)$ consists of finitely many disjoint bounded intervals for $t\in[0,T_A]$.
It follows that $(\widetilde{\Omega},\widetilde{v})\in X_{T_A}$.
Clearly,
\eqref{eq-w1} and \eqref{eq-w2} hold with $T$ replaced by $T_A$.
Also, since $(\widetilde{\Omega},\widetilde{v})$ is a classical solution for $0\leq t< T_A<\infty$,
the condition {\bf(C2)} is satisfied with $T$ replaced by $T_A$.
Hence,
$(\widetilde{\Omega},\widetilde{v})$ is a weak solution for $0\leq t\leq T_A$.
In particular,
{\bf(H1)} and {\bf(H2)} holds with
$(\Omega_0,v_0)$ replaced by $(\widetilde{\Omega}(T_A),\widetilde{v}(x,T_A))$.
By taking $t=T_A$ as an initial time and by applying Proposition~\ref{prop:classical sol},
we can assert that there exists a unique classical solution with initial time $t=T_A$.
Namely, the solution can be extended until the next annihilation occurs.
Therefore, the proof is completed.
\end{proof}

\begin{lemma}\label{lem-3.1}
Let $(\Omega,v)$ be a weak solution of \eqref{SLPini} for $0\leq t\leq T$.
If $v_0(x)\geq0$ for all $x\in\R$, then $v(x,t)\geq0$  in $\R\times[0,T]$.
\end{lemma}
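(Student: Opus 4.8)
The plan is to reduce the assertion to a scalar ODE comparison along each vertical line $\{x\}\times[0,T]$, using the single structural fact that the reaction vanishes or points upward at the level $v=0$: since $g(u,0)=g_1u\ge 0$ for $u\in\{0,1\}$, the constant $0$ is a subsolution of the $v$-dynamics in both phases (strictly increasing inside $\Omega$, stationary outside). Thus non-negativity of the initial datum should be propagated.

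First I would extract a pointwise evolution law from the weak formulation \eqref{eq-w2}. Testing with $\psi(x,t)=\phi(x)\eta(t)$, where $\phi\in L^2(\R)$ has compact support and $\eta\in H^1((0,T))$ with $\eta(0)=\eta(T)=0$, shows that $t\mapsto\int_\R v(x,t)\phi(x)\,dx$ belongs to $H^1(0,T)$ with weak derivative $\int_\R g({\bf 1}_{\Omega(t)},v)\phi\,dx$; the right-hand side is integrable because $v$ is continuous (hence bounded on the space-compact support of $\phi$) and $g$ is bounded there. Letting $\phi$ range over a countable family and invoking Fubini, I obtain for a.e.\ $x\in\R$ that $w(t):=v(x,t)$ is absolutely continuous on $[0,T]$ and solves the Carath\'eodory problem
\[
\dot w(t)=g\big(a(t),w(t)\big)=g_1\,a(t)-\frac{g_2\,w(t)}{g_3\,w(t)+g_4}\ \text{ for a.e. }t,\qquad w(0)=v_0(x)\ge 0,
\]
with $a(t):={\bf 1}_{\Omega(t)}(x)\in\{0,1\}$ measurable in $t$. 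Continuity of $v$ ensures $g_3w+g_4>0$ along the portion of the trajectory near $0$, so $g$ is locally Lipschitz in its second argument there and the ODE is meaningful.

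Next I would carry out the comparison with the subsolution $0$ by contradiction. Suppose $v(x,t_1)<0$ for some $x$ and $t_1$, and set $t_0:=\sup\{t\in[0,t_1]:w(t)\ge 0\}$. Since $w(0)\ge 0$ and $w$ is continuous, $w(t_0)=0$ and $w(t)<0$ on $(t_0,t_1]$. By continuity, $w(t)\in(-g_4/g_3,0)$ on some right-neighborhood $(t_0,t_0+\eta)$; on this range the term $-g_2w/(g_3w+g_4)$ is strictly positive (as $w<0$ and $g_3w+g_4>0$) while $g_1a(t)\ge 0$, so $\dot w(t)>0$ for a.e.\ $t\in(t_0,t_0+\eta)$. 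Integrating the absolutely continuous $w$ gives $w(t_0+\eta)=w(t_0)+\int_{t_0}^{t_0+\eta}\dot w\,dt>0$, contradicting $w<0$ there. Hence $v(x,\cdot)\ge 0$ for a.e.\ $x$, and since $v\in C(Q_T)$ the inequality holds on all of $\R\times[0,T]$.

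The main obstacle is the first step: promoting the distributional identity \eqref{eq-w2} to a genuine pointwise-in-$x$ Carath\'eodory ODE whose right-hand side is well defined (in particular, keeping $g_3w+g_4$ away from zero along the relevant part of the trajectory). Once this is secured, the comparison is entirely elementary and hinges only on the sign condition $g(\cdot,0)\ge 0$. As an alternative to the pointwise ODE, one could test \eqref{eq-w2} against a regularization of the negative part of $v$ to derive $\frac{d}{dt}\int_\R (v^-)^2\,dx\le 0$ from the same sign structure; I would nonetheless prefer the ODE route, since it avoids approximating the non-smooth test function $v^-$ within the admissible class $H^1((0,T);L^2(\R))$.
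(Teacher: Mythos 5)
Your argument is correct, but it follows a genuinely different route from the paper. The paper's proof is a one-line $L^2$ energy argument: it inserts $\psi=v^-{\bf 1}_{(-K,K)\times[0,t_0]}$ directly into \eqref{eq-w2}, computes $\int_0^{t_0}\!\int_\R v\psi_t\,dx\,dt=-\tfrac12\int_{-K}^K (v^-)^2(x,t_0)\,dx$, and concludes $v^-(\cdot,t_0)\equiv 0$; implicitly it is using exactly the sign fact you isolate, namely that $g({\bf 1}_{\Omega(t)},v)\,v^-\ge 0$ wherever $v<0$ (so the reaction term has the favorable sign), together with the restriction of the weak identity to $[0,t_0]$ from Lemma~\ref{lem:wk-t}. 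You instead first upgrade \eqref{eq-w2} to a pointwise Carath\'eodory ODE $\dot w=g(a(t),w)$ along almost every vertical line via separated test functions and Fubini, and then run an elementary first-crossing comparison with the subsolution $0$. What your route buys is that it sidesteps the admissibility question the paper glosses over --- $v^-{\bf 1}_{(-K,K)\times[0,t_0]}$ is not obviously in $H^1((0,T);L^2(\R))$ (the time cut-off is discontinuous, and the identity $(v^-)_t=-v_t{\bf 1}_{\{v<0\}}$ itself presupposes the very time-regularity of $v$ that your first step establishes) --- and it makes explicit where the structure of $g$ enters; the cost is the measure-theoretic bookkeeping in the reduction, which you handle correctly (the only caveat, which you flag, is that $g_3v+g_4$ must stay positive for \eqref{eq-w2} to be meaningful at all, and your argument only ever needs the ODE near $w=0$ where this is guaranteed by continuity). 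Both proofs are sound; yours is longer but more self-contained.
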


The proof of this lemma is stated in the Appendx.}
We are ready to prove Theorem~\ref{thm:local existence}.

\begin{proof}[Proof of Theorem~\ref{thm:local existence}]
By Proposition~\ref{prop:local-existence} and Proposition~\ref{prop:classical sol},
we obtain the local existence and uniqueness
of a classical solution. Moreover, by Lemma~\ref{lem-3.1},
the classical solution (also a weak solution) is non-negative. This complete the proof.
\end{proof}

\begin{remark}\label{rk-H2}
The condition {\bf(H2)} is necessary for the uniqueness.
If there is a point $x_0\in\partial \Omega(0)$ such that $W(v_0(x_0))= 0$, then the uniqueness does not hold.
Indeed, we can construct the multiple solutions
starting from the initial condition $(\Omega(0), v_0(x))$ as follows:
\beaa
\Omega(0)=\{x\in\R\ |\ x>0\},\quad v_0(x)=\dfrac ab-\arctan x.
\eeaa
Then we have two solutions:
\belaa
\Omega_1(t)=\{x\in\R\ |\ x>s_1(t)\},\\
v_1(x,t)=\begin{cases}
   G_1\left(G_1^{-1}(v_0(x))+t\right),\qquad (x>s_1(t)),\\
   G_0\left(G_0^{-1}(v_0(x))+t\right),\qquad (x\le s_1(t)),
   \end{cases}\\
s_1'(t)=a-bG_1\left(G_1^{-1}(v_0(s_1(t)))+t\right),
\eelaa
and
\belaa
\Omega_2(t)=\{x\in\R\ |\ x>s_2(t)\},\\
v_2(x,t)=\begin{cases}
   G_1\left(G_1^{-1}(v_0(x))+t\right),\qquad (x>s_2(t)),\\
   G_0\left(G_0^{-1}(v_0(x))+t\right),\qquad (x\le s_2(t)),
   \end{cases}\\
s_2'(t)=a-bG_0\left(G_0^{-1}(v_0(s_2(t)))+t\right).
\eelaa
We regard the interface as a front for the first solution and a back for the second solution.
Note that $g(1,a/b)>0$ by the assumption {\bf(A)}. Thus $s_1(t)$ is decreasing in $t$ and $s_2(t)$ is increasing in $t$ for $t$ close to zero.
We can also construct other solutions.
Thus the condition {\bf(H2)} is required for the uniqueness of solutions
as well as the well-posedness of solutions. We refer to \cite{ChenGao} for more detailed discussion.
\end{remark}

\subsection{The global existence and uniqueness of weak solutions}

In this subsection, we shall establish the global existence and uniqueness of weak solutions to problem \eqref{SLPini}.
Since $T_A=\infty$ means the classical solution exists globally in time (so does the weak solution),
we only discuss weak solutions when $T_A<\infty$.

{
\begin{lemma}\label{lem:wk-t}
Let $(\Omega,v)$ be a weak solution of \eqref{SLPini} for $0\leq t\leq T$.
Then it is also a weak solution of \eqref{SLPini} for $0\leq t\leq \tau$ for any $\tau\in (0,T)$.
\end{lemma}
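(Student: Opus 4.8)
The plan is to verify that the time-restriction of $(\Omega,v)$ to $Q_\tau=\R\times[0,\tau]$ meets every clause of Definition~\ref{def-weak}. Membership in $X_\tau$ is inherited by restriction: continuity of $v$, Lipschitz continuity of $v$ in $x$ and of $\partial\Omega$, and the initial conditions are all local in time, while the set $\overline{\bigcup_{0\le t\le\tau}\partial\Omega(t)\times\{t\}}$ appearing in condition (3) is contained in its $[0,T]$ counterpart, so $v\ne a/b$ there as well. Condition {\bf(C2)} is equally direct: if $B(x_0,r_0)\times\{t_0\}\subset\Omega$ with $t_0\in[0,\tau)$, then $t_0\in[0,T)$, so {\bf(C2)} for $[0,T]$ yields $\tau_0\in(0,T-t_0]$ depending only on $r_0$ with $\{x_0\}\times[t_0,t_0+\tau_0]\subset\Omega$; replacing $\tau_0$ by $\min\{\tau_0,\tau-t_0\}$ produces the admissible propagation time in $(0,\tau-t_0]$.

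The real content is the pair of identities {\bf(C1)} on $[0,\tau]$, which I would obtain from the $[0,T]$ identities by truncating the test functions in time. Fix $\varphi\in H^1((0,\tau);L^2(\R))$, let $\widehat\varphi$ be the extension to $Q_T$ that is constant in time on $[\tau,T]$ (so $\widehat\varphi(\cdot,t)=\varphi(\cdot,\tau)$ for $t\ge\tau$, using the $L^2$-trace $\varphi(\cdot,\tau)$ available since $H^1((0,\tau);L^2)\hookrightarrow C([0,\tau];L^2)$), and for small $\varepsilon>0$ let $\xi_\varepsilon$ be the Lipschitz cutoff equal to $1$ on $[0,\tau]$, vanishing on $[\tau+\varepsilon,T]$, with $\xi_\varepsilon'=-1/\varepsilon$ on $(\tau,\tau+\varepsilon)$. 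Then $\widetilde\varphi_\varepsilon:=\widehat\varphi\,\xi_\varepsilon\in H^1((0,T);L^2(\R))$ is an admissible test function, which I insert into \eqref{eq-w1}. Because $\widetilde\varphi_\varepsilon(\cdot,T)=0$, the top boundary term drops, leaving $-\int_\R\mathbf{1}_{\Omega(0)}\varphi(x,0)\,dx$ on the left, while $(\widetilde\varphi_\varepsilon)_t$ equals $\varphi_t$ on $(0,\tau)$ and $-\tfrac1\varepsilon\varphi(\cdot,\tau)$ on $(\tau,\tau+\varepsilon)$.

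It then remains to let $\varepsilon\to0$. The concentrated interior term $\tfrac1\varepsilon\int_\tau^{\tau+\varepsilon}\!\int_{\Omega(t)}\varphi(x,\tau)\,dx\,dt$ converges to $\int_{\Omega(\tau)}\varphi(x,\tau)\,dx=\int_\R\mathbf{1}_{\Omega(\tau)}\varphi(x,\tau)\,dx$, since $|\Omega(t)\triangle\Omega(\tau)|\to0$ as $t\to\tau$ (the interface endpoints move continuously because $\partial\Omega$ is Lipschitz). The boundary integral splits accordingly: along each interface $|n_1|\,d\sigma=dt$, so the contribution from the slab $\partial\Omega\cap(\R\times(\tau,\tau+\varepsilon))$ is an integral over a time window of length $\varepsilon$ and hence vanishes, while the part over $(0,\tau)$ is unchanged. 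Collecting the limits and rearranging gives exactly \eqref{eq-w1} with $T$ replaced by $\tau$. The second identity \eqref{eq-w2} follows from the identical truncation applied to $\psi$ (its compact support is preserved), and is simpler since it carries no boundary term; there the concentrated term $\tfrac1\varepsilon\int_\tau^{\tau+\varepsilon}\!\int_\R v\,\psi(x,\tau)\,dx\,dt\to\int_\R v(x,\tau)\psi(x,\tau)\,dx$ directly from $v\in C(Q_T)$.

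I expect the main obstacle to be precisely the two concentrated limits: showing that the $\tfrac1\varepsilon$-average of the interior term selects the slice $\int_{\Omega(\tau)}\varphi(x,\tau)\,dx$ and that the slab boundary contribution is negligible. Both rest on the Lipschitz structure of $\partial\Omega$ guaranteed by $(\Omega,v)\in X_T$, together with the identification $|n_1|\,d\sigma=dt$ along the interfaces; the remaining bookkeeping (admissibility of $\widetilde\varphi_\varepsilon$ and the final rearrangement of signs) is routine. This same scheme, truncating in space as well as in time, is what yields the localized identities recorded in Remark~\ref{rem:weak-restricted}.
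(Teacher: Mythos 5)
Your proposal is correct and follows essentially the same route as the paper: both truncate the test functions in time with a piecewise-linear cutoff concentrated in an $\varepsilon$-window at $t=\tau$ and pass to the limit, the only cosmetic difference being that you place the ramp on $[\tau,\tau+\varepsilon]$ (after extending $\varphi$ constantly in time) while the paper places it on $[\tau-\varepsilon,\tau]$. The concentrated-limit and vanishing-slab arguments you identify as the main content are exactly the steps carried out in the paper's Appendix proof.
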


The converse of Lemma \ref{lem:wk-t} does not hold in general because $v$ may attain $a/b$ at $t=T$ or $\partial\Omega$ is not Lipschitz continuous at $t=T$ (such that $(\Omega,v)\not\in {X_T}$). However, in our problem, the weak solution exists globally in time (see Proposition \ref{prop:global weak sol}).
}
\begin{lemma}\label{lem:extend}
Let $(\Omega_1,v_1)$ {\rm (}resp. $(\Omega_2,v_2)${\rm)} { be} a weak solution for $0\leq t\leq T_1$ {\rm (}resp. $T_1\leq t\leq T_2${\rm )}.
Define
\bea
\widehat{\Omega}&:=&\ds{\rm int}_{\R^2}\,\overline{\Omega_1\cup\Omega_2},\label{ext-omega}\\
\widehat{v}&:=&\begin{cases}v_1\quad &(0\leq t\leq T_1),\\
v_2&(T_1\leq t\leq T_2).\end{cases} \label{ext-v}
\eea
If
\[
{\rm int}_\R\,\lim_{t\to T_1^-}\overline {\Omega_1(t)}=\Omega_2(T_1),\quad v_1(x,T_1)=v_2(x,T_1)\quad \mbox{for any $x\in\R$},
\]
then $(\widehat{\Omega},\widehat{v})$ is a weak solution for $0\leq t\leq T_2$.
\end{lemma}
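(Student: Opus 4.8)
The plan is to verify separately the two clusters of requirements: first that the glued pair $(\widehat\Omega,\widehat v)$ belongs to the space $X_{T_2}$, and then that it satisfies the weak-solution conditions \textbf{(C1)} and \textbf{(C2)}. The organizing observation is that the matching hypotheses force the time slices to glue cleanly, namely
\[
\widehat\Omega(t)=\Omega_1(t)\ (0\le t<T_1),\qquad \widehat\Omega(T_1)=\Omega_2(T_1),\qquad \widehat\Omega(t)=\Omega_2(t)\ (T_1<t\le T_2),
\]
with $\widehat v(\cdot,t)$ agreeing with $v_1$ or $v_2$ correspondingly. The first identity holds because a point with time coordinate $t<T_1$ receives no $\Omega_2$ contribution, so near such a slice the operation $\mathrm{int}_{\mathbb R^2}\overline{\,\cdot\,}$ applied to $\Omega_1\cup\Omega_2$ reduces to $\mathrm{int}_{\mathbb R^2}\overline{\Omega_1}$, which recovers $\Omega_1$ since $\partial\Omega_1$ is Lipschitz; the slice at $T_1$ is pinned down by the hypothesis $\mathrm{int}_{\mathbb R}\lim_{t\to T_1^-}\overline{\Omega_1(t)}=\Omega_2(T_1)$.

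For membership in $X_{T_2}$, continuity of $\widehat v$ on $Q_{T_2}$ is immediate away from $t=T_1$ and follows across $t=T_1$ from $v_1(\cdot,T_1)=v_2(\cdot,T_1)$, while Lipschitz continuity in $x$ holds with constant $\max\{L_1,L_2\}$ coming from the two pieces. Condition (4) is inherited from $(\Omega_1,v_1)$ at $t=0$. Condition (3), that $\widehat v\neq a/b$ on $\overline{\bigcup_{0\le t\le T_2}\partial\widehat\Omega(t)\times\{t\}}$, follows because that closure is contained in the union of the analogous closures for $\Omega_1$ on $[0,T_1]$ and for $\Omega_2$ on $[T_1,T_2]$; in particular every limit point created by interfaces colliding at $t=T_1$ already lies in $\overline{\bigcup_{0\le t\le T_1}\partial\Omega_1(t)\times\{t\}}$, where $v_1\neq a/b$ by $(\Omega_1,v_1)\in X_{T_1}$. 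The remaining requirement, that $\partial\widehat\Omega$ be Lipschitz, is the main obstacle. Each interface of $\Omega_1$ is a Lipschitz graph $x=x_k^{(1)}(t)$ of bounded speed by Proposition~\ref{prop:classical sol}, and similarly for $\Omega_2$; the matching makes the surviving interfaces line up continuously across $t=T_1$, so the concatenation is again Lipschitz. The delicate points are those where two interfaces of $\Omega_1$ meet at $t=T_1$: there the complement of $\widehat\Omega$ closes a shrinking cusp, and one must use the uniform speed lower bound $\delta$ of Proposition~\ref{prop:classical sol} to confirm the boundary stays a graph of bounded slope up to and including the collision point, so that $\partial\widehat\Omega$ remains Lipschitz there.

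For \textbf{(C1)} the plan is to split the time integrals at $T_1$. Applying Remark~\ref{rem:weak-restricted} to $(\Omega_1,v_1)$ on $[0,T_1]$ and to $(\Omega_2,v_2)$ on $[T_1,T_2]$ and adding the two identities, the bracketed boundary contributions at $t=T_1$ cancel by $v_1(\cdot,T_1)=v_2(\cdot,T_1)$, the bulk integrals add to the full integrals over $[0,T_2]$ since the slices differ only at the single time $t=T_1$, and the surface integrals likewise add, the horizontal slice $\{t=T_1\}$ meeting each interface in finitely many points and hence carrying no $d\sigma$-measure (here one uses that \textbf{(C2)}/finite speed forbids horizontal interface segments). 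This gives \eqref{eq-w1} and \eqref{eq-w2} for $(\widehat\Omega,\widehat v)$ on $[0,T_2]$.

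Finally, for \textbf{(C2)} I would chain the two pieces with a uniform constant $c=\min\{c_1,c_2\}$, where $c_i$ is the \textbf{(C2)}-constant of each piece controlled by the speed bound $\delta$ of Proposition~\ref{prop:classical sol}. Given $B(x_0,r_0)\times\{t_0\}\subset\widehat\Omega$ with $t_0<T_1$, the property of $(\Omega_1,v_1)$ carries $\{x_0\}\times[t_0,\cdot]$ inside $\widehat\Omega$ up to $t=T_1$ with a positive residual radius, whereupon the matching places the segment inside $\Omega_2(T_1)=\widehat\Omega(T_1)$ and the property of $(\Omega_2,v_2)$ continues it, producing a $\tau_0$ depending only on $r_0$; the case $t_0\ge T_1$ and the complementary case $B\subset\widehat\Omega^c$ are handled identically. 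Collecting these verifications shows $(\widehat\Omega,\widehat v)$ is a weak solution on $[0,T_2]$.
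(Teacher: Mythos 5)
Your proposal takes essentially the same route as the paper: the paper likewise establishes \textbf{(C1)} by writing the weak identities for $(\Omega_1,v_1)$ on $[0,T_1]$ and $(\Omega_2,v_2)$ on $[T_1,T_2]$ and adding them, with the boundary terms at $t=T_1$ cancelling by the matching hypotheses, and then simply asserts that membership in $X_{T_2}$ and \textbf{(C2)} follow ``by the assumptions.'' Your extra verifications (the Lipschitz property of $\partial\widehat{\Omega}$ at collision points via the arrival-time graphs, and the chaining argument for \textbf{(C2)}) are correct elaborations of details the paper omits.
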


{ The proofs of two lemmas are} put in the Appendix. We now provide a simple example to illustrate Lemma~\ref{lem:extend}.
Let $(\Omega_1,v_1)$ {\rm (}resp. $(\Omega_2,v_2)${\rm)} a classical solution for $0\leq t<T_1$ {\rm (}resp. $T_1\leq t<T_2${\rm )}
such that
\beaa
\Omega_1(t)&:=&(x_1(t),x_2(t))\cup (x_3(t),x_4(t))\qquad (0\leq t<T_1),\\
\Omega_2(t)&:=&(x_1(t),x_4(t))\hspace{3.5cm} (T_1\leq t<T_2).
\eeaa
Also, we assume that
\beaa
x_1(T_1)<x_2(T_1)=x_3(T_1)<x_4(T_1)\quad \mbox{and $v_1(x,T_1)=v_2(x,T_1)$ for any $x\in\R$}.
\eeaa
By Proposition~\ref{prop:cw}, two classical solutions then become two weak solutions defined on $[0,T_1]$ and $[T_1,T_2]$, respectively.
Then Lemma~\ref{lem:extend} implies that if $\widehat{\Omega}$ is defined by \eqref{ext-omega}, which is given by
\[
\left\{(x,t)\in\R\times(0,T_2)\ \Bigg|\ \begin{array}{l} x_1(t)<x<x_2(t),\ x_3(t)<x<x_4(t)\ \mbox{ for $0<t< T_1$,}\\
              x_1(t)<x<x_4(t)\ \mbox{ for $T_1\le t< T_2$}\end{array}
\right\},
\]
and $\widehat{v}$ is defined by \eqref{ext-v},
then $(\widehat{\Omega},\widehat{v})$ is a weak solution of \eqref{SLP} for $0\leq t\leq T_2$.



We are ready to show the existence of global weak solutions of \eqref{SLPini}.

\begin{proposition}\label{prop:global weak sol}
There is a global in time
weak solution to problem \eqref{SLPini}.
\end{proposition}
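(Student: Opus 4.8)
The plan is to construct the global weak solution by concatenating the classical solutions that live between consecutive annihilation times, and to make the procedure terminate by tracking the integer-valued, strictly decreasing number of interfaces. First I would invoke Theorem~\ref{thm:local existence} to obtain the unique non-negative classical solution and extend it maximally in time. By Proposition~\ref{prop:classical sol} this solution exists and is unique on $[0,T_A^{(1)})$, where $T_A^{(1)}$ is the first annihilation time defined by \eqref{TA}. If $T_A^{(1)}=\infty$ we are done, since the global classical solution restricted to any $[0,T]$ is a weak solution (the verification of {\bf(C1)} and {\bf(C2)} is exactly that carried out in the proof of Proposition~\ref{prop:cw}, with no annihilation limit to take). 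So assume $T_A^{(1)}<\infty$. Then Proposition~\ref{prop:classical sol} provides a lower bound $|x_k'(t)|\ge\delta>0$ on $[0,T_A^{(1)})$, and Proposition~\ref{prop:cw} upgrades the classical solution to a weak solution $(\widetilde{\Omega},\widetilde{v})$ on the \emph{closed} interval $[0,T_A^{(1)}]$, whose limiting data $(\widetilde{\Omega}(T_A^{(1)}),\widetilde{v}(\cdot,T_A^{(1)}))$ again satisfy {\bf(H1)} and {\bf(H2)}.

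Next I would iterate. Taking $t=T_A^{(1)}$ as a new initial time with the data above, Propositions~\ref{prop:cw} and~\ref{prop:classical sol} yield a unique classical solution on $[T_A^{(1)},T_A^{(2)})$, again converted to a weak solution on $[T_A^{(1)},T_A^{(2)}]$. Because the weak solution from the previous stage has at its right endpoint precisely the data $(\widetilde{\Omega}(T_A^{(1)}),\widetilde{v}(\cdot,T_A^{(1)}))=({\rm int}_{\R}\lim_{t\to (T_A^{(1)})^-}\overline{\Omega_1(t)},\,\lim_{t\to (T_A^{(1)})^-}v_1(\cdot,t))$, the matching hypotheses of Lemma~\ref{lem:extend} hold verbatim, and Lemma~\ref{lem:extend} glues the two pieces into a single weak solution on $[0,T_A^{(2)}]$. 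Repeating and using that each glued solution is itself a weak solution carrying the correct endpoint data, I obtain weak solutions on $[0,T_A^{(k)}]$ for every $k$ by induction on $k$.

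The decisive point --- and essentially the only place where anything could go wrong --- is to rule out an accumulation of the annihilation times $T_A^{(1)}<T_A^{(2)}<\cdots$ at a finite value. Here I would use that each annihilation event $x_k(T_A)=x_{k+1}(T_A)$ removes the two colliding interface points: passing to $\widetilde{\Omega}(T_A)={\rm int}_{\R}(\lim_{t\to T_A^-}\overline{\Omega(t)})$ either collapses an interval $(x_{2j-1},x_{2j})$ to a point or merges two adjacent intervals into one, so the number of interfaces $2m$ \emph{strictly decreases} (by at least $2$) at every annihilation while remaining bounded below by $0$. Consequently there are at most $m$ annihilations, the sequence terminates after some $N\le m$ steps, and no accumulation is possible. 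The hard part is therefore not an analytic estimate but this combinatorial monotonicity, which Proposition~\ref{prop:cw} has in effect prepared by guaranteeing that {\bf(H1)}--{\bf(H2)} persist after each collision.

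Finally I would treat the terminal stage. At step $N$ either $T_A^{(N+1)}=\infty$, so the remaining interfaces never collide and the classical solution is global from $T_A^{(N)}$; or all intervals have disappeared, so $\Omega(t)=\emptyset$ and $v$ solves $v_t=g(0,v)$ with $v(\cdot,t)=G_0\big(G_0^{-1}(v(\cdot,T_A^{(N)}))+t-T_A^{(N)}\big)$ for all $t\ge T_A^{(N)}$. In both cases the terminal piece, restricted to any $[T_A^{(N)},T]$, is a weak solution by the verification of {\bf(C1)}--{\bf(C2)} used in Proposition~\ref{prop:cw}. A last application of Lemma~\ref{lem:extend} glues $[0,T_A^{(N)}]$ to $[T_A^{(N)},T]$, producing a weak solution on $[0,T]$ for every $T>0$. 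Since $T$ is arbitrary, this is the desired global in time weak solution, completing the proof.
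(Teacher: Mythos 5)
Your proposal is correct and follows essentially the same route as the paper: extend the classical solution to each annihilation time via Proposition~\ref{prop:classical sol}, convert it to a weak solution on the closed interval via Proposition~\ref{prop:cw}, glue successive pieces with Lemma~\ref{lem:extend}, and terminate because each annihilation strictly reduces the interface count so only finitely many collisions can occur. Your explicit bound of at most $m$ annihilation events merely spells out what the paper attributes to {\bf(C2)} (the number of interfaces cannot increase), so the two arguments coincide.
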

\begin{proof}
By Proposition~\ref{prop:classical sol},
the classical solution $(\Omega,v)$ exists for $t\in[0,T_A)$, where $T_A$ is the annihilation time.
If $T_A=\infty$, then the proof is done.
If $T_A<\infty$,  we define
$(\widetilde{\Omega}, \widetilde{v})$ as in Proposition~\ref{prop:cw} and apply Proposition~\ref{prop:cw},
$(\widetilde{\Omega}, \widetilde{v})$ becomes a weak solution for $0\leq t\leq T_A$. In particular,
{\bf(H1)} and {\bf(H2)} holds with replacing $(\Omega_0,v_0)$ by $(\widetilde{\Omega}(T_A),\widetilde{v}(x,T_A))$.
Then, using Proposition~\ref{prop:classical sol} again,
there exists a unique classical solution $(\Omega_2,v_2)$ for $T_A\leq t<T_2$ for some $T_2>T_A$ with initial data
\[\Omega_2(T_A):=\widetilde{\Omega}(T_A),\quad v_2(x,T_A):=\widetilde{v}(x,T_A).\]
By Proposition~\ref{prop:cw},
$(\Omega_2,v_2)$ can be extended to a weak solution for $T_A\leq t<T_2$ (still denoted by $(\Omega_2,v_2)$).

Next, we define $(\widehat{\Omega}, \widehat{v})$ as in Lemma~\ref{lem:extend} with
$(\Omega_1,v_1):=(\widetilde{\Omega}, \widetilde{v})$.
Then it follows from Lemma~\ref{lem:extend} that $(\widehat{\Omega},\widehat{v})$ is a weak solution for $t\in[0, T_2)$.
If $T_2=\infty$, then Proposition~\ref{prop:global weak sol} follows. Otherwise, $T_2<\infty$ implies that
$T_2$ is the second annihilation time (Proposition~\ref{prop:classical sol}).
We can repeat the above process to extend the weak solution.
Because of {\bf(C2)}, the number of interfaces cannot increase in time, which
implies that the annihilation only occurs finitely many times.
Therefore, by repeating the above process finitely many times, we thus find a globally in time weak solution by gluing
classical solutions. This completes the proof.
\end{proof}

Next, we deal with the uniqueness of the weak solutions. The uniqueness result is given as follows:

\begin{proposition}\label{prop:unique weak}
Suppose that $(\Omega_1,v_1)$ and $(\Omega_2,v_2)$ are two weak solutions. 
Then
$\Omega_1=\Omega_2$ and $v_1=v_2$.
\end{proposition}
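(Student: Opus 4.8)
The plan is to reduce the uniqueness of weak solutions to the already-established uniqueness of \emph{classical} solutions, by showing that every weak solution is, between consecutive annihilation times, nothing but the classical solution built in Propositions~\ref{prop:classical sol}--\ref{prop:cw}. Since by Lemma~\ref{lem:wk-t} a weak solution on $[0,T]$ restricts to a weak solution on every $[0,\tau]$, it suffices to prove $\Omega_1=\Omega_2$ and $v_1=v_2$ on an arbitrary common finite interval $[0,T]$; letting $T\to\infty$ then gives the full statement.

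The heart of the matter (Step~1) is to show that near $t=0$ a weak solution $(\Omega,v)$ is a classical solution. Because $(\Omega,v)\in X_T$, the boundary $\partial\Omega$ is Lipschitz, $v$ is continuous, and $v\ne a/b$ on the interfaces (condition (3) of $X_T$), so that $W(v)\ne0$ there; moreover $\Omega(0)=\Omega_0$ has the $2m$ endpoints $x_1^0<\cdots<x_{2m}^0$ with $W(v_0(x_k^0))\ne0$ by \textbf{(H2)}. The qualitative input is condition \textbf{(C2)}, which forbids nucleation and prevents interfaces from becoming horizontal; combined with the Lipschitz structure of $\partial\Omega$ and the strict ordering of the $x_k^0$, it forces $\partial\Omega$ to be, for small $t$, exactly a disjoint union of $2m$ Lipschitz graphs $x=x_k(t)$ with $x_k(0)=x_k^0$ and $\Omega(t)=\bigcup_{j}(x_{2j-1}(t),x_{2j}(t))$. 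Localising the test function $\varphi$ in \eqref{eq-w1} around a single graph and using that on it $|n_1|\,d\sigma=dt$ while the outer normal fixes the sign $(-1)^k$, identity \eqref{eq-w1} becomes the weak form of $x_k'(t)=(-1)^kW(v(x_k(t),t))$; as $v$ is continuous and $W$ is affine the right-hand side is continuous, so $x_k\in C^1$ and \eqref{ode} holds pointwise. Choosing $\psi$ in \eqref{eq-w2} supported in the interior of $\Omega$ (resp. of $\Omega^c$) yields $v_t=g(1,v)$ (resp. $v_t=g(0,v)$), i.e. \eqref{pde}. Hence near $t=0$ the weak solution is classical in the sense of Definition~\ref{def-classical}, and by Lemma~\ref{lem:A} it coincides with the unique classical solution.

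I would then continue up to the first annihilation time $T_A$ of that classical solution. Let $T^*$ be the supremum of times on which $\Omega_1=\Omega_2$ and $v_1=v_2$. If $T^*<T_A$, then at $t=T^*$ no interfaces have collided, the common configuration is again a union of disjoint intervals with $W(v)\ne0$ at its endpoints (guaranteed by condition (3) of $X_T$), so restarting Step~1 at $t=T^*$ forces agreement slightly beyond $T^*$, contradicting maximality. Thus both weak solutions equal the classical solution on $[0,T_A)$, and by the monotone-in-time continuous extension of Proposition~\ref{prop:cw} they agree at $t=T_A$, where the common limiting data $(\widetilde\Omega(T_A),\widetilde v(T_A))$ again satisfy \textbf{(H1)}--\textbf{(H2)}. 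Restarting at $T_A$ and repeating gives agreement up to the next annihilation; since \textbf{(C2)} prevents nucleation, the number of interfaces can only decrease (by two at each collision), so on $[0,T]$ there are at most $m$ annihilations. After finitely many repetitions we obtain $\Omega_1=\Omega_2$ and $v_1=v_2$ on all of $[0,T]$.

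The main obstacle is Step~1: converting the purely integral information in \eqref{eq-w1}--\eqref{eq-w2}, together with \textbf{(C2)} and the $X_T$-regularity, into the pointwise free-boundary law \eqref{ode} and the transport equation \eqref{pde}. In particular one must rule out, near each (re)start time, any interface structure other than the expected finite family of $C^1$ graphs. It is precisely here that $v\ne a/b$ on the interfaces (hence $W(v)\ne0$) and the non-nucleation condition \textbf{(C2)} are indispensable: without them the interfaces could stall or split and the non-uniqueness pathology of Remark~\ref{rk-H2} would reappear.
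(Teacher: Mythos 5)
Your overall architecture is sound and, in its first half, coincides with the paper's: the decisive step in both arguments is to upgrade the purely integral data \eqref{eq-w1}--\eqref{eq-w2} plus {\bf(C2)} into the statement that, away from annihilation instants, $\partial\Omega$ is a finite union of strictly monotone $C^1$ graphs $x=x_k(t)$ obeying \eqref{ode}, and then to iterate across the (finitely many) annihilation times. This is exactly the content of the paper's Lemmas~\ref{lem:graph1} and~\ref{lem:graph2} together with Remark~\ref{graph-t0}. Where you genuinely diverge is in how uniqueness is then extracted: you reduce to the uniqueness of classical solutions (Lemma~\ref{lem:A}, proved by the arrival-time comparison), whereas the paper never leaves the weak formulation --- it compares the two weak solutions directly via Gronwall-type estimates with tailored test functions (Lemma~\ref{lem:Q} for $v$ on rectangles avoiding the interfaces, and Lemma~\ref{lem:key} for two interfaces emanating from the same point). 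Your route is arguably more economical once the graph structure is in hand (for instance, $v_1=v_2$ then follows from elementary ODE uniqueness rather than from Lemma~\ref{lem:Q}), while the paper's Lemma~\ref{lem:key} has the advantage of handling the interface comparison and the $v$ comparison in one weak-level estimate, including at the restart times.

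Two places where your sketch is thinner than the argument actually requires. First, you attribute the graph structure to ``{\bf(C2)} combined with the Lipschitz structure of $\partial\Omega$,'' but {\bf(C2)} and Lipschitz regularity alone do not exclude that the interface is locally a (non-invertible) monotone graph $t=\varphi(x)$, nor do they by themselves rule out vertical doubling; the paper needs the localized area identity (Remark~\ref{rem:weak-restricted}, leading to \eqref{eq:weak-area-org}) to show that $|\Omega(t)\cap(\xi_1,\xi_2)|$ is \emph{strictly} monotone in $t$ whenever an interface is present with $W(v)\ne 0$, and only then does invertibility and the integrated law \eqref{eq:x} follow. You do invoke the localized \eqref{eq-w1}, but only \emph{after} assuming the graph structure, so the logical order should be reversed. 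Second, at an annihilation time you assert that the two weak solutions ``agree at $t=T_A$'' by continuity; for $v$ this is immediate, but for $\Omega(T_A)$ (defined through $\overline{\Omega}$) one must rule out that a weak solution retains or drops material exactly at $t=T_A$, which is what part (ii) of Lemmas~\ref{lem:graph1}--\ref{lem:graph2} establishes. Neither point is a wrong turn --- both are provable with the tools you name --- but they are precisely where the paper spends its effort, so they should not be passed over as consequences of {\bf(C2)} alone.
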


 The proof of Proposition~\ref{prop:unique weak} is quite involved. We need prepare several lemmas.
The first two lemmas show that any interface of the weak solutions can be locally viewed as a smooth function of $t$.

\begin{lemma}\label{lem:graph1}
Let $(\Omega,v)$ be a weak solution of \eqref{SLPini} for $t\in[0,T]$.
Also, assume that $(x_0,t_0)\in \partial\Omega(t_0)$ for some $t_0\in(0,T)$ such that
\begin{itemize}
\item[{\rm (a)}]  $W(v(x_0,t_0))>0$,
\item[{\rm (b)}]  $(x_0- \varepsilon,t_0)\in \Omega(t_0)$ $(\mbox{resp. }  (x_0+\varepsilon,t_0)\in \Omega(t_0))$ for any sufficiently small $\varepsilon>0$.
\end{itemize}
Then, one of the following cases holds\,{\rm :}
\begin{itemize}
\item[{\rm (i)}]
there are positive constants $\delta$, $\ep$ and a function $x(\cdot)\in C^1((t_0-\delta,t_0+\delta))$ such that $x'(t_0)>0$ $(\mbox{resp. } x'(t_0)<0)$ and
\[
\{(x(t),t)\ |\ t\in (t_0-\delta,t_0+\delta)\}=\partial\Omega\cap\Big[(x_0-\ep,x_0+\ep)\times (t_0-\delta,t_0+\delta)\Big],
\]
\item[{\rm (ii)}]
there are positive constants $\delta$, $\ep$ and two functions $x_i(\cdot)\in C^1((t_0-\delta,t_0])$ $(i=1,2)$ such that
$x_i(t_0)=x_0$,
\beaa
&&\lim_{t\to t_0^-}x_1(t)=\lim_{t\to t_0^-}x_2(t)=x_0,\quad \lim_{t\to t_0^-}x_1'(t)> 0,\quad \lim_{t\to t_0^-}x_2'(t)< 0,\\
&&\bigcup_{i=1}^2\{(x_i(t),t)\ |\ t_0-\delta<t\le t_0\}=
\partial\Omega\cap\Big[(x_0-\ep,x_0+\ep)\times (t_0-\delta,t_0]
\Big].
\eeaa
Namely, $(x_0,t_0)$ is an intersection point of two interfaces.
\end{itemize}
\end{lemma}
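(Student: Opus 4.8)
The plan is to work entirely inside a small space--time box $R:=(x_0-\ep,x_0+\ep)\times(t_0-\delta,t_0+\delta)\subset Q_T$ and to show that on $R$ the weak solution has the structure of a classical solution (case (i)) or of two classical solutions colliding at $(x_0,t_0)$ (case (ii)); by the reflection $x\mapsto -x$ I may assume the first alternative in {\rm(b)}, that $\Omega$ lies immediately to the left of $x_0$. First I would localize. Since $v$ is continuous and $W(v(x_0,t_0))>0$, I can shrink $\ep,\delta$ so that $W(v)\ge c>0$ on $\overline R$ for some $c$, i.e. $v<a/b$ on $\overline R$. Two monotonicity facts then hold on $R$: in $\Omega\cap R$ one has $v_t=g(1,v)$ with $g(1,\cdot)$ decreasing and $g(1,a/b)>0$ (by {\bf(A)}), hence $g(1,v)>0$ and $v$ is strictly increasing in $t$; in $\Omega^c\cap R$ one has $v_t=g(0,v)\le 0$ (using $v\ge 0$ from Lemma~\ref{lem-3.1}). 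The decisive computational tool is that along any Lipschitz graph $x=\xi(t)$ one has $|n_1|\,d\sigma=dt$, so the localized identity of Remark~\ref{rem:weak-restricted} with $\varphi\equiv 1$ converts interface motion into an ODE.

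Next I would analyze the forward slice $t\ge t_0$. By {\rm(b)} and the openness of $\Omega$, a ball around $(x_0-\ep,t_0)$ lies in $\Omega$, so by {\bf(C2)} the point $x_0-\ep$ stays in $\Omega$ throughout $[t_0,t_0+\delta]$. Using that $\partial\Omega$ is Lipschitz (hence locally a finite union of curves that, by {\bf(C2)} together with $W(v)\ge c$, are graphs over $t$ with no nucleation), I would, after shrinking $\ep,\delta$, show that the component of $\Omega(t)$ through $x_0-\ep$ meets $(x_0-\ep,x_0+\ep)$ in a single interval $(x_0-\ep,\xi(t))$. Applying Remark~\ref{rem:weak-restricted} with $\varphi\equiv 1$ over an interval $(x_0-\ep,X)$ free of other interfaces gives $\xi(t_2)-\xi(t_1)=\int_{t_1}^{t_2}W(v(\xi(t),t))\,dt$, which forces $\xi$ to be Lipschitz (as $W(v)$ is bounded); since the integrand is continuous, $\xi\in C^1$ with $\xi'(t)=W(v(\xi(t),t))\ge c>0$, and in particular $\xi(t)>x_0$ for $t>t_0$.

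The dichotomy appears in the backward slice $t<t_0$, and this step is what I expect to be the main obstacle. Tracing $\partial\Omega$ downward from $(x_0,t_0)$, I would use its Lipschitz structure to write it near the center as a finite union of curves, each a graph over $t$ with advancing (even) interfaces having slope $\ge c$ and receding (odd) interfaces slope $\le -c$. Because $\Omega$ lies to the left of $x_0$ at $t_0$, the phase just below $t_0$ at $x_0$ is $\Omega^c$, and only two configurations are compatible with this: either a single advancing graph $\xi_-(t)<x_0$ sweeps up through $(x_0,t_0)$---in which case $\xi_-$ and the forward graph $\xi$ match into one $C^1$ curve, yielding case (i)---or an $\Omega^c$ gap $(x_1(t),x_2(t))$ whose two boundaries move toward each other ($x_1'\ge c$, $x_2'\le -c$) closes exactly at $(x_0,t_0)$, yielding case (ii) with $\lim_{t\to t_0^-}x_1'>0$ and $\lim_{t\to t_0^-}x_2'<0$. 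The delicate points are to exclude every other local configuration (three or more curves accumulating at the center, or an oscillating interface) and to justify that the one-sided slope limits exist with the stated signs; both follow from running the forward ODE argument on each side once the slices are known to contain a single interface, which in turn rests on combining the finite Lipschitz structure of $\partial\Omega$ with {\bf(C2)} and $W(v)\ge c$. Finally, in case (ii) I would observe that this is precisely the gluing situation treated by Proposition~\ref{prop:cw} and Lemma~\ref{lem:extend}, so nothing beyond the two one-sided $C^1$ graphs is asserted.
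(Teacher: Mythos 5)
Your overall strategy---shrink the box so that $W(v)\ge c>0$, apply the localized identity of Remark~\ref{rem:weak-restricted} with $\varphi\equiv 1$ to turn the interface law into the ODE $x(t_2)-x(t_1)=\int_{t_1}^{t_2}W(v(x(t),t))\,dt$, and invoke {\bf(C2)} to exclude nucleation---is the same as the paper's, and your forward-in-time argument reproduces the paper's treatment of its case (2). The gap is exactly where you flag it: the classification of the local configurations near $(x_0,t_0)$. You choose to parametrize the interface by graphs $x=\xi(t)$ over time, and must then rule out by hand oscillating curves, three or more branches accumulating at the center, and horizontal pieces; you assert that this ``rests on combining the finite Lipschitz structure of $\partial\Omega$ with {\bf(C2)} and $W(v)\ge c$,'' but no argument is supplied, and {\bf(C2)} by itself does not make a Lipschitz curve a graph over $t$ (it only forbids nucleation and flat-in-$t$ pieces). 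Since the dichotomy (i)/(ii) \emph{is} the content of the lemma, the proposal as written proves only the easy half.

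The paper closes this gap with two steps absent from your outline. First, applying the $\varphi\equiv1$ identity on an arbitrary subinterval $(\xi_1,\xi_2)$ of the box and using $W(v)>0$ there, the function $t\mapsto|\Omega(t)\cap(\xi_1,\xi_2)|$ is \emph{strictly} increasing whenever $\partial\Omega$ meets $(\xi_1,\xi_2)\times(t_1,t_2)$ (Claim~1). Second, this strict monotonicity forces the single Lipschitz curve through $(x_0,t_0)$ to be a graph over the \emph{space} variable, $t=\phi(x)$: if the curve met a vertical line $x=x_3$ at two times, a thin rectangle on one side of $x_3$ would have $|\Omega(t)\cap\cdot|$ constant between those times, contradicting Claim~1 (Claim~2). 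Once the interface is the graph of a single function $\phi$ of $x$, condition {\bf(C2)} excludes local minima and flat pieces of $\phi$, so only three configurations remain: $\phi$ monotone decreasing (excluded again by Claim~1), $\phi$ strictly increasing (giving case (i) after inverting to $x=x(t)$ and differentiating the ODE), or $\phi$ increasing then decreasing with its maximum at $x_0$ (giving case (ii)); the one-sided $C^1$ regularity and signs of the slopes come from the same ODE on each branch, just as in your forward step. I recommend adopting this graph-over-$x$ parametrization: it converts the ``delicate point'' you defer into a monotonicity trichotomy for a single scalar function, rather than an ad hoc enumeration of curve configurations.
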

\begin{proof}
Since $\partial\Omega$ is Lipschitz continuous (from $(\Omega,v)\in X_T$), there exists a Lipschitz continuous function
(upon relabeling and reorienting the coordinates axis if necessary) whose graph passes through $(x_0,t_0)$.
Also, by {\bf(C2)}, no new interface can generate.
Hence, by taking $\varepsilon>0$ and $\delta>0$ sufficiently small and defining
\beaa
D_{\varepsilon,\delta}:=(x_0-\varepsilon,x_0+\varepsilon)\times (t_0-\delta,t_0+\delta),
\eeaa
we may assume that
$\partial\Omega \cap D_{\varepsilon,\delta}$
contains only one Lipschitz curve passing through $(x_0,t_0)$.
Because of (a) and the continuity of $W$,
we may also assume (if necessary, we may choose $\ep$ and $\delta$ smaller)
\bea\label{v-avlue}
W(v(x,t))>0\quad \mbox{on $D_{\varepsilon,\delta}$}.
\eea

{\bf Claim 1:} we show
\be\label{weak-area}
|\Omega(t_2)\cap (\xi_1,\xi_2)|>|\Omega(t_1)\cap (\xi_1,\xi_2)|
\ee
for all $x_0-\ep<\xi_1<\xi_2<x_0+\ep$ and $t-\delta<t_1<t_2<t_0+\delta$.
Here we recall that $\Omega(t)\cap (\xi_1,\xi_2)=(\xi_1,x(t))$.

By Remark \ref{rem:weak-restricted}, we get
\bea\label{eq:weak-area-org}
\left[\int_{\xi_1}^{\xi_2}{\bf 1}_{\Omega(t)}dx\right]_{t_1}^{t_2}
&=&\int_{\partial \Omega\cap ((\xi_1,\xi_2)\times(t_1,t_2))}W(v) |n_1|d\sigma%
\eea
for $x_0-\ep<\xi_1<\xi_2<x_0+\ep$ and $t_0-\delta<t_1<t_2<t_0+\delta$.
If $\partial \Omega\cap ((\xi_1,\xi_2)\times(t_1,t_2))\ne \emptyset$,
by \eqref{v-avlue}, we have either the right hand side is zero if $|n_1|\equiv0$ or
the right hand side is positive if $|n_1|\not\equiv0$.
If the former case happens, we have
\beaa
|\Omega(t_2)\cap (\xi_1,\xi_2)|=|\Omega(t_1)\cap (\xi_1,\xi_2)|
\eeaa
for $x_0-\ep<\xi_1<\xi_2<x_0+\ep$ and $t_0-\delta<t_1<t_2<t_0+\delta$.
This contradicts with $|n_1|\equiv0$. Hence the latter case must hold
and then {\bf Claim 1} is completed.

{\bf Claim 2:} we show the Lipschitz curve can be represented as a Lipschitz function of $x$ locally. Namely, there exists
a Lipschitz $\varphi(\cdot)$ such that
$t=\varphi(x)$ for $x\in(x_0-\varepsilon,x_0+\varepsilon)$ with $t_0=\varphi(x_0)$.

For contradiction,
suppose that the Lipschitz curve passes through two points $(x_3,t_3)$ and $(x_3,t_4)$ in $(x_0-\ep,x_0+\ep)\times I_{\delta}$.
If $x_3\leq x_0$, taking $\xi_1=x_3-\kappa$ for some sufficiently small $\kappa>0$, $\xi_2=x_3$, $t_1=t_3$ and $t_2=t_4$ it follows from (b) that
\beaa
|\Omega(t_2)\cap (\xi_1,\xi_2)|=|\Omega(t_1)\cap (\xi_1,\xi_2)|,
\eeaa
which contradicts \eqref{weak-area}. Similarly, if $x_3\geq x_0$, we can reach a contradiction. Hence {\bf Claim 2} is completed.

We now complete the proof of this lemma. First  we observe that $t=\phi(\cdot)$ cannot have a local minimum point on $(x_0-\ep,x_0+\ep)$. Otherwise,
by {\bf(C2)} we reach a contradiction immediately. It follows that one of the following cases must occur:
\begin{itemize}
\item[(1)] $t=\phi(\cdot)$ is monotone decreasing on $(x_0-\ep,x_0+\ep)$.
\item[(2)] $t=\phi(\cdot)$ is monotone increasing on $(x_0-\ep,x_0+\ep)$.
\item[(3)] $t=\phi(\cdot)$ is monotone increasing on $(x_0-\ep,\eta)$ and is monotone decreasing on $(\eta,x_0+\ep)$ for some $\eta\in (x_0-\ep,x_0+\ep)$.
\end{itemize}

We shall show that (1) cannot occur. Indeed, from (b) and \eqref{eq:weak-area-org} with $t_i:=\phi(\xi_i)$ for $i=1,2$, we obtain
\beaa
0=|\Omega(t_2)\cap (\xi_2,\xi_1)|>|\Omega(t_1)\cap (\xi_2,\xi_1)|,
\eeaa
which is impossible. This shows that (1) never occurs.

For (2), we furthermore show that $t=\phi(\cdot)$ is strictly increasing.
If it is not true, then its graph contains a flat piece, say $\phi(x)=\kappa$ on $(p_1,p_2)$ for some $\kappa>0$ and $p_i\in(x_0-\ep,x_0+\ep)$.
Then by  {\bf(C2)}, we reach a contradiction. Hence $t=\phi(\cdot)$ is strictly increasing. So its inverse function $x=x(t)$ is well defined.
Moreover,
by \eqref{eq:weak-area-org} with $\xi_i=x(t_i)$ for $i=1,2$, we have
\be\label{eq:x}
x(t_2)-x(t_1)=\int_{t_1}^{t_2}W(v(x(t),t))dt.
\ee
Differentiating \eqref{eq:x} in $t_2$ and using \eqref{v-avlue}, { we obtain} the conclusion (i) of Lemma~\ref{lem:graph1}.

For (3), if $x_0<\eta$, then we can shrink $\ep>0$ and $\delta>0$ sufficiently small and reduces this case into case (2).
Then by the same process we can obtain the conclusion (i) of Lemma~\ref{lem:graph1}. If $x_0=\eta$, then following the process of (2)
we see that $t=\phi(\cdot)$ is strictly increasing (resp. decreasing) on $(x_0-\ep,x_0]$ (resp. $[x_0,x_0+\ep)$). Hence
there exist two continuous functions $x_1(t)$ and $x_2(t)$ such that $x_i(t_0)=x_0$ for $i=1,2$. Using \eqref{eq:weak-area-org}, we can
obtain the conclusion (ii) of Lemma~\ref{lem:graph1}. This completes the proof.
\end{proof}

Similar result holds for $W(v(x_0,t_0))<0$. We state the result as follows without repeating a proof.

\begin{lemma}\label{lem:graph2}
Let $(\Omega,v)$ be a weak solution of \eqref{SLPini} for $t\in[0,T]$.
Also, assume that $(x_0,t_0)\in \partial\Omega(t_0)$ for some $t_0\in(0,T)$ such that
\begin{itemize}
\item[{\rm (a)}]  $W(v(x_0,t_0))<0$,
\item[{\rm (b)}]  $(x_0- \varepsilon,t_0)\in \Omega(t_0)$ $(\mbox{resp. }  (x_0+\varepsilon,t_0)\in \Omega(t_0))$ for any sufficiently small $\varepsilon>0$.
\end{itemize}
Then one of the following cases holds:
\begin{itemize}
\item[{\rm (i)}]
there are positive constants $\delta$, $\ep$ and a function $x(\cdot)\in C^1((t_0-\delta,t_0+\delta))$ such that $x'(t_0)<0$ $(\mbox{resp. } x'(t_0)>0)$ and
\[
\{(x(t),t)\ |\ t\in (t_0-\delta,t_0+\delta)\}= \partial\Omega\cap\Big[(x_0-\ep,x_0+\ep)\times (t_0-\delta,t_0+\delta)\Big],
\]
\item[{\rm (ii)}]
there are positive constants $\delta$, $\ep$ and two functions $x_i(\cdot)\in C^1((t_0-\delta,t_0])$ $(i=1,2)$ such that
$x_i(t_0)=x_0$,
\beaa
&&\lim_{t\to t_0^-}x_1(t)=\lim_{t\to t_0^-}x_2(t)=x_0,\quad \lim_{t\to t_0^-}x_1'(t)> 0,\quad \lim_{t\to t_0^-}x_2'(t)< 0,\\
&&\bigcup_{i=1}^2\{(x_i(t),t)\ |\ t_0-\delta<t\le t_0\}=
\partial\Omega\cap\Big[(x_0-\ep,x_0+\ep)\times (t_0-\delta,t_0]
\Big].
\eeaa
Namely, $(x_0,t_0)$ is a intersection point of two interfaces.
\end{itemize}
\end{lemma}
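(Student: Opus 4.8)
The plan is to reduce Lemma~\ref{lem:graph2} to Lemma~\ref{lem:graph1} by a symmetry argument, since the two statements differ only in the sign of $W(v(x_0,t_0))$ and correspondingly in the sign of the interface slope. The cleanest route is a time reversal: I would introduce the reflected configuration obtained by reversing time about $t_0$, observe that this turns the hypothesis $W(v(x_0,t_0))<0$ of Lemma~\ref{lem:graph2} into the hypothesis $W<0$ read backwards, and then run the self-contained area-monotonicity argument of Lemma~\ref{lem:graph1} with all inequalities flipped. Concretely, under the standing assumption $(\Omega,v)\in X_T$, the boundary $\partial\Omega$ is Lipschitz, condition {\bf(C2)} still forbids new interfaces from nucleating, and by continuity of $W$ we may shrink $\varepsilon,\delta$ so that $W(v(x,t))<0$ on the whole box $D_{\varepsilon,\delta}:=(x_0-\varepsilon,x_0+\varepsilon)\times(t_0-\delta,t_0+\delta)$.

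The core computation mirrors {\bf Claim 1} of Lemma~\ref{lem:graph1}, but with the opposite monotonicity. Starting from the restricted weak formulation of Remark~\ref{rem:weak-restricted},
\[
\left[\int_{\xi_1}^{\xi_2}{\bf 1}_{\Omega(t)}dx\right]_{t_1}^{t_2}
=\int_{\partial\Omega\cap((\xi_1,\xi_2)\times(t_1,t_2))}W(v)\,|n_1|\,d\sigma,
\]
the sign $W(v)<0$ now forces the area of $\Omega(t)\cap(\xi_1,\xi_2)$ to be \emph{strictly decreasing} whenever $\partial\Omega$ meets the box with $|n_1|\not\equiv 0$, and to be constant when $|n_1|\equiv 0$; the latter again contradicts $|n_1|\equiv 0$, so one obtains strict area decrease. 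With this monotonicity in hand, {\bf Claim 2} (that the single Lipschitz curve is a graph $t=\varphi(x)$ over $x$) goes through verbatim, since it only used that the area cannot stay constant across a vertical segment. One then classifies $t=\varphi(\cdot)$ into the monotone-decreasing, monotone-increasing, and one-interior-extremum cases exactly as before. The roles of the three cases are interchanged relative to Lemma~\ref{lem:graph1}: here the \emph{increasing} case is the one excluded by the area identity together with hypothesis~(b), while a strictly decreasing $\varphi$ yields the graph $x=x(t)$ with $x'(t_0)<0$, giving conclusion~(i); the interior-maximum case again produces two one-sided $C^1$ branches meeting at $(x_0,t_0)$ with opposite slopes, giving conclusion~(ii). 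The representation $x(t_2)-x(t_1)=\int_{t_1}^{t_2}W(v(x(t),t))\,dt$ and differentiation in $t_2$ deliver the $C^1$ regularity and the correct sign of $x'(t_0)$.

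I would organize the write-up to emphasize that only the signs change: state that $W(v)<0$ replaces $W(v)>0$ throughout, that ``strictly increasing area'' becomes ``strictly decreasing area,'' and that the excluded monotonicity case and the sign of $x'(t_0)$ flip accordingly, while every other step---the use of {\bf(C2)} to rule out local extrema that are minima (here maxima) of $\varphi$ and to exclude flat pieces, and the use of the area identity to derive the integral formula for $x(t)$---is identical. The main obstacle, and the only place deserving care rather than a mere appeal to symmetry, is bookkeeping the correspondence between the sign of $W$, the sign of $n_1$ along the relevant portion of $\partial\Omega$, and which side (left or right, governed by hypothesis~(b)) lies in $\Omega$: one must verify that the two instances of hypothesis~(b) in Lemma~\ref{lem:graph2} pair with $W<0$ to reproduce the same contradictions that (b) paired with $W>0$ in Lemma~\ref{lem:graph1}. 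Once this sign dictionary is fixed, the proof is a transcription of the previous one, which is why it is legitimate to omit the repeated details.
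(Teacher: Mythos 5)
Your proposal is correct and matches the paper's intent exactly: the paper states Lemma~\ref{lem:graph2} without proof, remarking only that it follows by the same argument as Lemma~\ref{lem:graph1}, and your sign-flipped rerun of that argument (strict area \emph{decrease} from $W<0$ in the identity of Remark~\ref{rem:weak-restricted}, exclusion of the increasing-graph case instead of the decreasing one, and the interior-maximum case again yielding the two colliding $C^1$ branches) is precisely the intended proof. The opening appeal to time reversal is an unnecessary detour --- neither time reversal nor spatial reflection literally maps one lemma onto the other, since $v_t=g(\cdot,v)$ and condition {\bf(C2)} are not time-symmetric --- but you do not actually rely on it, as the body of your argument is the direct transcription with reversed inequalities and the sign bookkeeping you describe is the correct one.
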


\begin{remark}\label{graph-t0}
(i) We remark that Lemmas~\ref{lem:graph1} and ~\ref{lem:graph2} still hold for $t_0=0$. Indeed, following the same process in the proof
therein but replace $(t_0-\delta,t_0+\delta)$ by $[0,\delta)$ we can show that conclusion (i) with $(t_0-\delta,t_0+\delta)$ replaced by $[0,\delta)$
in Lemmas~\ref{lem:graph1} and ~\ref{lem:graph2} always occurs when $t_0=0$.

(ii) Lemmas~\ref{lem:graph1} and ~\ref{lem:graph2} show that
if $(\Omega,v)$ is a weak solution for $t\in[t_0,t_1]$, then there exists an integer $N$ such that $(\Omega,v)$ becomes a classical solution
for $t\in[\tau_i, \tau_{i+1})$ for some $i=0,1,...,N$ with $\tau_0=t_0$ and $\tau_N=t_1$, where $\tau_i$ is an annihilation time for $1\leq i\leq N-1$.
\end{remark}

Suppose that $(\Omega_1,v_1)$ and $(\Omega_2,v_2)$ are two weak solutions of \eqref{SLPini} for $t\in[0,T]$.
The following lemma shows that $v_1=v_2$ over a rectangle
lying in either $\Omega_1\cap\Omega_2$ or
$\Omega_1^c\cap\Omega_2^c$, and its bottom edge lies on the $x$-axis.

\begin{lemma}\label{lem:Q}
Let $(\Omega_1,v_1)$ and $(\Omega_2,v_2)$ be two weak solutions of \eqref{SLPini} {for $t\in[0,T]$}.
Then $v_1=v_2$ in $\overline Q$, where
\beaa
Q:=\Big\{(x,t)\ |\ \mbox{$\exists$ $\varepsilon>0$ such that $[x-\varepsilon,x+\varepsilon]\times[0,t]$ lies in either $\Omega_1\cap\Omega_2$ or $\Omega_1^c\cap\Omega_2^c$}\Big\}.
\eeaa
\end{lemma}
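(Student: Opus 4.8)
The plan is to exploit the fact that, on the rectangle supplied by the definition of $Q$, both solutions satisfy the \emph{same} scalar ODE in $t$ with the same initial datum, so that uniqueness for that ODE forces them to agree. Fix a point $(x_*,t_*)\in Q$ and choose $\varepsilon>0$ with $R:=[x_*-\varepsilon,x_*+\varepsilon]\times[0,t_*]\subset\Omega_1\cap\Omega_2$; the case $R\subset\Omega_1^c\cap\Omega_2^c$ is handled identically after replacing $g(1,\cdot)$ by $g(0,\cdot)$ throughout. On $R$ we have ${\bf 1}_{\Omega_1(t)}={\bf 1}_{\Omega_2(t)}=1$, so the $v$-equation reduces to $v_t=g(1,v)$ in the weak sense for each solution, and the two interface terms play no role.

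First I would convert the weak formulation into a pointwise-in-$x$ integral equation. Applying the localized identity \eqref{eq-w2-2} of Remark~\ref{rem:weak-restricted} on subrectangles $(x_*-\varepsilon,x_*+\varepsilon)\times(t_1,t_2)\subset R$, with a test function $\psi=\phi(x)$ independent of $t$ (so $\psi_t\equiv 0$, and $\psi\in H^1((t_1,t_2);L^2)$ trivially), gives for $i=1,2$, after Fubini on the right,
\[
\int_{x_*-\varepsilon}^{x_*+\varepsilon}\big(v_i(x,t_2)-v_i(x,t_1)\big)\phi(x)\,dx
=\int_{x_*-\varepsilon}^{x_*+\varepsilon}\Big(\int_{t_1}^{t_2}g(1,v_i(x,s))\,ds\Big)\phi(x)\,dx.
\]
Since $\phi$ is arbitrary in $L^2(x_*-\varepsilon,x_*+\varepsilon)$ the two integrands agree for a.e.\ $x$, and because $v_i\in C(Q_T)$ and $g$ is continuous both sides are continuous in $x$, so the identity holds for \emph{every} $x$ in the slice. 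Letting $t_1\downarrow 0$ and using continuity of $v_i$ up to $t=0$ together with $v_i(x,0)=v_0(x)$, I obtain for each fixed $x\in(x_*-\varepsilon,x_*+\varepsilon)$ the Volterra equation
\[
v_i(x,t)=v_0(x)+\int_0^t g(1,v_i(x,s))\,ds,\qquad t\in[0,t_*].
\]

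Next I would invoke uniqueness for this integral equation. By Lemma~\ref{lem-3.1} and {\bf(H1)} we have $v_i\ge 0$, and on $\{v\ge0\}$ the map $v\mapsto g(1,v)=g_1-\tfrac{g_2v}{g_3v+g_4}$ is globally Lipschitz with constant $g_2/g_4$, since $\partial_v g(1,v)=-g_2g_4/(g_3v+g_4)^2$ is bounded there. Writing $w:=v_1-v_2$ and subtracting the two integral equations yields $|w(x,t)|\le\tfrac{g_2}{g_4}\int_0^t|w(x,s)|\,ds$, so Gronwall's inequality gives $w\equiv 0$ on $R$; in particular $v_1(x_*,t_*)=v_2(x_*,t_*)$. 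As $(x_*,t_*)\in Q$ was arbitrary, $v_1=v_2$ on $Q$, and continuity of $v_1,v_2$ extends the equality to $\overline Q$.

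The step I expect to be the main obstacle is the passage from the $x$-integrated weak formulation to the pointwise-in-$x$ integral equation: one must justify stripping the arbitrary test function $\phi$ to get an a.e.\ identity, then upgrade it to a genuine pointwise identity valid all the way down to the bottom edge $t=0$, which is precisely where continuity of $v_i$ on $\overline{Q_T}$ and the matching of initial data are needed. Once that clean integral equation is secured, the Lipschitz/Gronwall conclusion is routine.
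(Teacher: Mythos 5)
Your argument is correct, but it takes a genuinely different and more elementary route than the paper's. The paper also works from the localized weak identity on a rectangle $I_{\ep}\times[0,t_1]$ where ${\bf 1}_{\Omega_1}={\bf 1}_{\Omega_2}$, but it stays at the integral level: it inserts the duality-type test function $\psi={\bf 1}_{I_{\ep}}(x)\int_t^{t_1}w^+(x,s)\,ds$ with $w=v_1-v_2$, estimates by H\"older, and uses a smallness condition $\max\{L_0,L_1\}t_1<\sqrt{2}$ to absorb the Lipschitz term, concluding $w^+\equiv0$ (and symmetrically $w^-\equiv0$) on $I_{\ep}\times[0,t_1]$, after which it bootstraps in time up to $t_0$. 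You instead upgrade the weak identity to the pointwise Volterra equation $v_i(x,t)=v_0(x)+\int_0^t g(1,v_i(x,s))\,ds$ by testing against time-independent $\phi\in L^2$ and then apply Gronwall once, with no smallness condition and no bootstrap. The step you flag as the main obstacle is indeed the only delicate one, and it goes through: a time-independent $\phi$ lies in $H^1((t_1,t_2);L^2)$ with vanishing time derivative, so Remark~\ref{rem:weak-restricted} applies; the resulting a.e.-in-$x$ identity upgrades to a pointwise one by continuity of $v_i$ and $g$; and the limit $t_1\downarrow0$ is justified by $v_i\in C(Q_T)$ and $v_i(\cdot,0)=v_0$. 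Your use of Lemma~\ref{lem-3.1} to get $v_i\ge0$ and hence the global Lipschitz bound $|\partial_v g(1,v)|\le g_2/g_4$ plays the role of the paper's constants $L_0,L_1$. What the paper's formulation buys is that the same test-function device reappears essentially verbatim in Lemma~\ref{lem:key}, where the indicator functions no longer coincide and a pointwise ODE reduction is unavailable; within the scope of the present lemma, both arguments are complete.
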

\begin{proof}
Let $(x_0,t_0)\in Q$. Let $L_0>0$ (resp. $L_1>0$) be the Lipschitz constant for $g(0,\cdot)$ (resp. $g(1,\cdot)$).
First, we choose $0<t_1<t_0$ such that
\be\label{small time}
\max\{L_0,L_1\}t_1< \sqrt 2.
\ee
Set $z:={\bf 1}_{\Omega_1}-{\bf 1}_{\Omega_2}$ and $w:=v_1-v_2$.
{Since $(x_0,t_0)\in Q$, we can find an interval $I_{\ep}=[x_0-\ep,x_0+\ep]$ such that $I_{\ep}\times [0,t_0]$ lies in either $\Omega_1\cap\Omega_2$ or $\Omega_1^c\cap\Omega_2^c$.
It follows that  $z=0$ in $I_{\ep}\times [0,t_0]$.
In particular, $z=0$ in $I_{\ep}\times [0,t_1]$.}

By Lemma \ref{lem:wk-t}, $(\Omega_1,v_1)$ and $(\Omega_2,v_2)$ are two weak solutions of \eqref{SLPini} 
{for $0<t<t_1$}.
By definition of {weak solutions} and choosing a test function $\psi$ satisfying
\[
\psi:={\bf 1}_{I_{\ep}}(x)\int_t^{t_1} w^+(x,s)ds,\quad w^+:=\max\{w,0\},
\]
we have
\beaa
0&=&\left[\int_{\R}w\psi dx\right]_0^{t_1}\\
&=&
\int_0^{t_1}\int_{\R}\Big(w\psi_t+(g({\bf 1}_{\Omega_1(t)},v_1)-g({\bf 1}_{\Omega_2(t)},v_2))\psi\Big )dxdt\\
&\leq&-\int_0^{t_1}\int_{x_0-\ep}^{x_0+\ep}(w^+)^2dxdt+\max\{L_0,L_1\}\int_0^{t_1}\int_{x_0-\ep}^{x_0+\ep}w\Big(\int_t^{t_1}w^+(x,s)ds\Big) dxdt\\
&\leq&-\int_{x_0-\ep}^{x_0+\ep}\int_0^{t_1}(w^+)^2dtdx+ \frac1{\sqrt{2}}\max\{L_0,L_1\}t_1\int_{x_0-\ep}^{x_0+\ep}\int_0^{t_1}(w^+)^2 dtdx,
\eeaa
where we have used the H\"{o}lder inequality to obtain the last inequality.

{By \eqref{small time}, from the above estimate we must have $w^+=0$ and so $w\le 0$ in $I_{\ep}\times[0,t_1]$.
Similarly, we have $w\ge 0$ and so $w=0$ in $I_{\ep}\times[0,t_1]$. The above process can apply to derive
that $w=0$ in $I_{\ep}\times[t_1,\min\{2t_1,t_0\}]$. Using a bootstrap argument, we can obtain that $w=0$ in $I_{\ep}\times[0,t_0]$.
This completes the proof.}
\end{proof}

\begin{remark}\label{rk:thin rectangle}
From the proof of Lemma~\ref{lem:Q}, we see that the conclusion still hold when the interval $[x-\varepsilon,x+\varepsilon]$ in $Q$ is replaced by
$[x,x+\varepsilon]$ or $[x-\varepsilon,x]$.
\end{remark}

\medskip

\begin{lemma}\label{lem:key}
Let $(\Omega_1,v_1)$ and $(\Omega_2,v_2)$ be two weak solutions of \eqref{SLPini} for $t\in[0,T]$.
Suppose that there exist a rectangle $D_T:=[\xi_1,\xi_2]\times [0,T]$ and continuous functions $y_k(t)$, $k=1,2$,
such that
\begin{itemize}
\item[{\rm (i)}] $\xi_1<y_1(t)\le y_2(t)<\xi_2$ for $0\le t\le T$ and $y_1(0)=y_2(0)${\rm ;}
\item[{\rm (ii)}] either $\Omega_k\cap D_T=\{(x,t)\ |\ \xi_1<x<y_k(t),\ 0\leq t\leq T \}$ for $k=1,2$ or $\Omega_k\cap D_T=\{(x,t)\ |\ y_k(t)<x<\xi_2,\ 0\leq t\leq T \}$ for $k=1,2$.
\end{itemize}
Then $\Omega_1\cap D_T=\Omega_2\cap D_T$ and $v_1=v_2$ in $D_T$.
\end{lemma}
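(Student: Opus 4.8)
The plan is to argue by continuous induction in time, showing that once the two interfaces agree at $t=0$ they can never separate. Set
\[
t^*:=\sup\{\tau\in[0,T]\ |\ y_1\equiv y_2 \text{ on } [0,\tau]\}.
\]
Since $y_1(0)=y_2(0)$ and $y_1,y_2$ are continuous, $t^*$ is well defined and $y_1\equiv y_2$ on $[0,t^*]$. By symmetry (reflecting $x\mapsto -x$) I only treat the first alternative in (ii), $\Omega_k\cap D_T=\{\xi_1<x<y_k(t)\}$, and I assume $t^*<T$ in order to reach a contradiction. Once $t^*=T$ is established, $\Omega_1\cap D_T=\Omega_2\cap D_T$, so $z:={\bf 1}_{\Omega_1}-{\bf 1}_{\Omega_2}\equiv0$ on $D_T$; the energy estimate in the proof of Lemma~\ref{lem:Q} uses only that $z=0$ on the rectangle (not the stronger membership condition defining $Q$), and hence delivers $v_1=v_2$ on all of $D_T$, finishing the proof.

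The two structural ingredients are as follows. First, inside $D_T$ each $\Omega_k$ has the \emph{single} boundary curve $x=y_k(t)$, so the intersection alternative (ii) of Lemmas~\ref{lem:graph1} and \ref{lem:graph2} cannot occur; the graph alternative (i) gives $y_k\in C^1$ with $y_k'(t)=W(v_k(y_k(t),t))$ (from \eqref{eq:x}) for $t\in(0,T]$, and one-sidedly at $t=0$ by Remark~\ref{graph-t0}. Moreover condition (3) in the definition of $X_T$ forces $v_k\ne a/b$ on the compact interface, so $|W(v_k)|\ge c_0>0$ there and the interfaces are strictly monotone with speed bounded below by $c_0$. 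Second, restricting by Lemma~\ref{lem:wk-t} and using the base-point coincidence on $[0,t^*]$, the $z=0$ version of the Lemma~\ref{lem:Q} argument (together with Remark~\ref{rk:thin rectangle}) shows $v_1=v_2$ on any one-sided neighbourhood of a point whose entire past lies in $\{z=0\}$; in particular $v_1(\cdot,t^*)=v_2(\cdot,t^*)$.

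The crux — and the step I expect to be the main obstacle — is to bound the interface traces of $w:=v_1-v_2$ by $|y_1-y_2|$, so that a Gronwall inequality can close. Depending on the sign of $W$ along the interfaces, one of the two traces is immediate: the relevant one-sided neighbourhood of that interface has $z=0$ throughout its past (by monotonicity of $y_1,y_2$), so $w$ vanishes there up to the boundary. The other trace is genuinely delicate, because a point just on its far side has spent part of its history in the thin strip $\{y_1(t)<x<y_2(t)\}$, where $z\ne0$. I would estimate it in Lagrangian (arrival-time) coordinates: writing $T_k(x)$ for the arrival time of $y_k$ at $x$, the two solutions agree up to the earlier arrival time, and on the interval between $T_1(x)$ and $T_2(x)$ they obey the two different ODEs $v_t=g(1,v)$ and $v_t=g(0,v)$; hence $|w(x,t)|\le C\,|T_1(x)-T_2(x)|$, the bounded drift being integrated and then propagated by Gronwall in $t$. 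Letting $x$ tend to the interface and using the speed lower bound $|y_k'|\ge c_0$ to convert the arrival-time gap into a spatial gap, $|T_1(x)-T_2(x)|\le c_0^{-1}|y_1(t)-y_2(t)|$, which yields $|w(y_k(t),t)|\le C'\,|y_1(t)-y_2(t)|$.

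Finally I would combine these. Using the interface ODEs and the Lipschitz continuity of $v_2$ in $x$,
\[
|y_1(t)-y_2(t)|\le b\!\int_{t^*}^{t}\!\big(|w(y_1(s),s)|+L\,|y_1(s)-y_2(s)|\big)\,ds
\le C''\!\int_{t^*}^{t}\!|y_1(s)-y_2(s)|\,ds,
\]
so Gronwall with $|y_1-y_2|(t^*)=0$ gives $y_1\equiv y_2$ on a right-neighbourhood of $t^*$, contradicting the definition of $t^*$. Therefore $t^*=T$, and the concluding $z\equiv0$ argument noted above yields $\Omega_1\cap D_T=\Omega_2\cap D_T$ and $v_1=v_2$ in $D_T$.
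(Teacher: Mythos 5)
Your argument is correct in its essential structure, but it follows a genuinely different route from the paper. The paper never leaves the weak formulation: it subtracts the two integral identities \eqref{eq-w1}--\eqref{eq-w2} localized to $[\xi_1,\xi_2]$, first proves the one-sided comparison $v_1\le v_2$ (exploiting $\Omega_1\cap D_T\subset\Omega_2\cap D_T$ and the sign of $g(1,\cdot)-g(0,\cdot)$ with the test function $\psi=-{\bf 1}_{\{w<0\}}$), then plugs in $\varphi(t)=\int_t^T(y_2(s)-y_1(s))ds$ and $\psi={\bf 1}_{[\xi_1,\xi_2]}\int_t^T w\,ds$ to obtain $\bigl(1-bT|v_1|_{\rm Lip}\bigr)\int_0^T(y_2-y_1)^2dt\le 0$, which forces $y_1\equiv y_2$ for $T$ small, and finally bootstraps in time. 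You instead recover the pointwise interface ODE $y_k'=W(v_k(y_k,\cdot))$ from Lemmas~\ref{lem:graph1}--\ref{lem:graph2} and run a continuous-induction/Gronwall argument on $|y_1-y_2|$, coupling back to $w$ through an arrival-time estimate. Your approach buys a single Gronwall pass on all of $[0,T]$ and a more classical flavour, at the cost of invoking the structure lemmas and the nondegeneracy $|W(v_k)|\ge c_0$ (which does follow from condition (3) of $X_T$ by compactness, as you say). Two remarks. First, your ``delicate'' Lagrangian trace estimate is avoidable: writing $v_1(y_1)-v_2(y_2)=\bigl(v_1(y_1)-v_1(y_2)\bigr)+\bigl(v_1(y_2)-v_2(y_2)\bigr)$ and routing the difference through whichever trace has its entire one-sided past in $\{z=0\}$ (the $y_2$-trace when the interfaces advance, the $y_1$-trace when they recede), only the Lipschitz continuity of $v_k$ in $x$ is needed; this is exactly the shortcut the paper takes via $v_1(y_2(t),t)=v_2(y_2(t),t)$. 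Second, your reading of Lemma~\ref{lem:Q} is right: its proof uses only $z=0$ on the rectangle together with the common initial data, not the stronger membership condition defining $Q$, so your concluding step and your identification of $v_1(\cdot,t^*)=v_2(\cdot,t^*)$ are justified. The one point you should make explicit is that $y_1$ and $y_2$ are monotone in the \emph{same} direction (which follows since $W(v_1(y_1(0),0))=W(v_2(y_2(0),0))=W(v_0(y_1(0)))$ and neither sign can change on $[0,T]$), as both your identification of the thin strip $\{y_1<x<y_2\}$ as the only region where $z\ne 0$ and the inequality $T_2(x)\le T_1(x)$ rely on it.
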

\begin{proof}
Since the proof is similar, we only consider the case $\Omega_k\cap D_T=\{(x,t)\ |\ \xi_1<x<y_k(t),\ 0\leq t\leq T \}$ for $k=1,2$. {Because of assumptions (i) and (ii), we can apply Lemma \ref{lem:Q} (and Remark~\ref{rk:thin rectangle}) so that $v_1(\xi_j,t)=v_2(\xi_j,t)$ for $0<t<T$ and $j=1,2$
and $v_1(y_2(t),t)=v_2(y_2(t),t)$ for $0<t<T$.
}

Taking test functions ${\bf 1}_{[\xi_1,\xi_2]}\varphi$ and ${\bf 1}_{[\xi_1,\xi_2]}\psi$ implies
\beaa
\left[\int_{\xi_1}^{y_k(t)}\varphi dx\right]_0^T
&=&
\int_0^T\int_{\xi_1}^{y_{k}(t)}\varphi_tdxdt
+\int_0^TW(v_k(y_k(t),t))\varphi(y_k(t),t) dt,\\
\left[\int_{\xi_1}^{\xi_2}v_k\psi dx\right]_0^T
&=&
\int_0^T\int_{\xi_1}^{\xi_2}\Big(v_k\psi_t+g({\bf 1}_{\Omega_k(t)},v_k)\psi\Big )dxdt
\eeaa
for $k=1,2$.
Set $z:={\bf 1}_{\Omega_2}-{\bf 1}_{\Omega_1}$ and $w:=v_2-v_1$.
Subtracting each equality for $k=1,2$, we get
\bea
\left[\int_{y_1(t)}^{y_2(t)}\varphi dx\right]_0^T
&=&\int_0^T\int_{y_1(t)}^{y_2(t)}\varphi_tdxdt\label{eq-w1-1}\\
&&+\int_0^T\Big(W(v_2(y_2(t),t))\varphi(y_2(t),t)-W(v_1(y_1(t),t))\varphi(y_1(t),t)\Big) dt,\nonumber\\
\left[\int_{\xi_1}^{\xi_2}w\psi dx\right]_0^T
&=&\int_0^T\int_{\xi_1}^{\xi_2}\Big(w\psi_t+(g({\bf 1}_{\Omega_2(t)},v_2)-g({\bf 1}_{\Omega_1(t)},v_1))\psi\Big )dxdt.\label{eq-w2-1}
\eea

First, we show that $v_1\le v_2$ in $D_T$.
Plugging
\[
\psi(x,t)=
\begin{cases}
0\quad &\mbox{if $w(x,t)\ge 0$,}\\
-1 &\mbox{if $w(x,t)< 0$,}
\end{cases}
\]
we get from \eqref{eq-w2-1} that
\beaa
\int_{\xi_1}^{\xi_2}w^-(x,T)dx=
\int_0^T\int_{\xi_1}^{\xi_2}\Big(g({\bf 1}_{\Omega_2(t)},v_2)-g({\bf 1}_{\Omega_1(t)},v_1)\Big )\psi dxdt,
\eeaa
where $w^-:=\max\{-w,0\}\ge 0$.
Also, we see from assumptions (i) and (ii) that
\beaa
\Big(g({\bf 1}_{\Omega_2(t)},v_2)-g({\bf 1}_{\Omega_1(t)},v_2)\Big )\psi\leq0\quad \mbox{in $D_T$}.
\eeaa
This implies
\beaa
\int_{\xi_1}^{\xi_2}w^-(x,T)dx
&= &
\int_0^T\int_{\xi_1}^{\xi_2}\Big(g({\bf 1}_{\Omega_2(t)},v_2)-g({\bf 1}_{\Omega_1(t)},v_2)\Big )\psi dxdt\\
&&+\int_0^T\int_{\xi_1}^{\xi_2}\Big(g({\bf 1}_{\Omega_1(t)},v_2)-g({\bf 1}_{\Omega_1(t)},v_1)\Big )\psi dxdt\\
&\le &C
\int_0^T\int_{\xi_1}^{\xi_2} w^- dxdt.
\eeaa
The Gronwall inequality implies that $w^-\equiv  0$.
Namely, $v_1\le v_2$ in $D_T$.

Next, we show that $\Omega_1\cap D_T=\Omega_2\cap D_T$.
Plugging
\[
\varphi(t)=\int_t^T(y_2(s)-y_1(s))ds\ge 0,\quad \psi(x,t)={\bf 1}_{[\xi_1,\xi_2]}(x)\int_t^Tw(x,s)ds
\]
into \eqref{eq-w1-1} and \eqref{eq-w2-1} implies
\bea
0&=&
-\int_0^T(y_2-y_1)^2dt
-b\int_0^T\Big(v_2(y_2(t),t)-v_1(y_1(t),t)\Big)\varphi(t) dt,\label{unique-weak1}\\
0&=&
-\int_0^T\int_{\xi_1}^{\xi_2}w^2dxdt+\int_0^T\int_{\xi_1}^{\xi_2}\Big(g({\bf 1}_{\Omega_2(t)},v_2)-g({\bf 1}_{\Omega_1(t)},v_1)\Big )\psi dxdt.\label{unique-weak2}
\eea
{Recall that $v_1(y_2(t),t)=v_2(y_2(t),t)$ for $0<t<T$.
Then from \eqref{unique-weak1} we have}
\beaa
0&\le &
-\int_0^T(y_2-y_1)^2dt
{- b\int_0^T\Big(v_2(y_2(t),t)-v_1(y_2(t),t)\Big)\varphi(t) dt}\\
&&-b\int_0^T\Big(v_1(y_2(t),t)-v_1(y_1(t),t)\Big)\varphi(t) dt\\
&\le&-\int_0^T(y_2-y_1)^2dt
+b\,\varphi(0)\int_0^T |v_1|_{\rm Lip}(y_2(t)-y_1(t))dt\, \\
&\le&-\int_0^T(y_2-y_1)^2dt
+b |v_1|_{\rm Lip} \varphi(0)^2\\
&\le&-\Big(1-bT|v_1|_{\rm Lip}\Big)\int_0^T(y_2-y_1)^2dt,
\eeaa
by using the Schwarz inequality.
We conclude that 
{$\Omega_1\cap D_T=\Omega_2\cap D_T$ for small $T>0$}.
Thus {from \eqref{unique-weak2}}  we have
\beaa
0&=&
-\int_0^T\int_{\xi_1}^{\xi_2}w^2dxdt+\int_0^T\int_{\xi_1}^{\xi_2}\Big(g({\bf 1}_{\Omega_1(t)},v_2)-g({\bf 1}_{\Omega_1(t)},v_1)\Big )\psi dxdt.
\eeaa
From this identity, by a similar argument as above leads that
\beaa
0&\le&-(1-T|g_v|_{L^\infty})\int_0^T\int_{\xi_1}^{\xi_2}|w|^2dxdt.
\eeaa
Thus we get $v_1=v_2$ in $[\xi_1,\xi_2]\times[0,T]$ for small $T>0$.

Therefore, by a bootstrap argument as in Lemma~\ref{lem:Q}, we can complete the proof.
\end{proof}

We are ready to show Proposition~\ref{prop:unique weak}.

\begin{proof}[Proof of Proposition~\ref{prop:unique weak}]
First, we claim that $\Omega_1=\Omega_2$ for $0<t<\tau$ where $\tau$ is small enough. Given any $x_0\in\p\Omega(0)$, let us consider the case of Lemma~\ref{lem:graph1} (i) or Lemma~\ref{lem:graph2} (i). Then together with Remark~\ref{graph-t0},
there are positive constants $\delta_i$, $\varepsilon_i$ and functions $x_i(\cdot)\in C^1([0, \delta_i))$, $i=1,2$ such that
\[
\{(x_i(t),t)\ |\ t\in [0, \delta_i)\}= \partial\Omega_i\cap\Big[(x_0-\ep_i,x_0+\ep_i)\times [0, \delta_i)\Big],\quad i=1,2.
\]
Taking $\xi_1=x_0-\ep, \xi_2=x_0+\ep$ with $\ep=\max\{\ep_1, \ep_2\}$ and $T=\min\{\delta_1, \delta_2\}$. In view of Lemma~\ref{lem:key} we have $x_1(t)=x_2(t)$ for $t\in [0, T]$. Since $x_0$ is given arbitrarily and $\Omega(0)$ consists of a finite number of bounded intervals, there exists a $\tau$ small enough such that $\Omega_1(t)=\Omega_2(t)$ for $0\le t\le\tau$.

Next, for $i=1,2$, we set $T_i$ as an annihilation time of $\Omega_i$. Without loss of generality, we assume $T_1\le T_2$. By a bootstrap argument as in Lemma~\ref{lem:Q}, we have $\Omega_1(t)=\Omega_2(t)$ for $0\le t\le\tau$ for any $\tau\le T_1$. By the definition of the annihilation time, we know that $T_1=T_2$. Applying Lemma~\ref{lem:graph1} (ii), Lemma~\ref{lem:graph2}  (ii) and Lemma~\ref{lem:key}, we obtain that $\Omega_1(t)=\Omega_2(t)$ for $0\le t\le T_1$. Again, by a bootstrap argument as in Lemma~\ref{lem:Q}, we have $\Omega_1=\Omega_2$.

Finally, set $\Omega_1=\Omega_2=\Omega$. If $(x_0, t_0)\in \partial\Omega$, we have $v_1(x_0, t_0)=v_2(x_0, t_0)$ by { Lemmas~\ref{lem:graph1}, ~\ref{lem:graph2} and ~\ref{lem:key}}; if $(x_0, t_0)\notin \partial\Omega$, we obtain $v_1(x_0, t_0)=v_2(x_0, t_0)$ by Lemma~\ref{lem:Q}. The proof of this proposition is done.
\end{proof}

\medskip
We end this section with the proof of Theorem~\ref{thm:global weak sol}.

\begin{proof}[Proof of Theorem~\ref{thm:global weak sol}]
Theorem~\ref{thm:global weak sol} follows from Proposition~\ref{prop:global weak sol} and Proposition~\ref{prop:unique weak}.
\end{proof}

\medskip


\medskip

\noindent{\bf Acknowledgments}
The first author is partly supported by the Ministry of Science and Technology of Taiwan under the grant MOST 105-2115-M-032-005-MY3, MOST 105-2811-M-032-007, MOST 106-2811-M-032-009 and MOST 107-2811-M-032-502.
The second author was partially supported by JSPS KAKENHI Grant Numbers { JP16KT0022 and JP20H01816}. The second author would like to thank
the Mathematics Division of NCTS (Taipei Office) for the warm hospitality and the support of the second author's visit to Taiwan.
The third author is partly supported from the
Young Scholar Fellowship Program by Ministry of Science and Technology (MOST) in Taiwan, under
Grants MOST 108-2636-M-009-009, MOST 109-2636-M-009-008 and MOST 110-2636-M-009-006.
He also thanks Meiji University for the hospitality during his visit.

\setcounter{equation}{0}
\setcounter{theorem}{0}
\section*{Appendix}

\renewcommand{\thesection}{A}

\begin{proof}[Proof of Lemma \ref{lem:lipschitz-appendix}]
We divide $[A,B]\times[0,\tau]$ into
\beaa
\Big(\bigcup_{j=1}^{2m}{x}_j([0,\tau])\times[0,\tau]\Big),\quad  [A,B]\times[0,\tau]\setminus\Big(\bigcup_{j=1}^{2m}{x}_j([0,\tau])\times[0,\tau]\Big).
\eeaa
By \eqref{non-overlapping}, we see that $I_k:={x}_k([0,\tau])\times[0,\tau]$ contains exactly one interface $x={x}_k(t)$, $t\in[0,\tau]$.
This allows us to represent $v$ in terms of functions in \eqref{G func}. Indeed,
by some simple computations, we have the following:
\begin{itemize}
\item[(i)] For $j\in\{1,2,...,m\}$, if ${x}_{2j}(\cdot)$ is increasing (resp. ${x}_{2j-1}(\cdot)$ is decreasing), then
\beaa
v(x,t)=\begin{cases}
   G_0\left(G_0^{-1}(v_0(x))+t\right),&\ t\leq T_k(x),\ (x,t)\in I_k,\\
   G_1\left(G_1^{-1}(v(x,T_k(x)))+t-T_k(x)\right),&\ t> T_k(x),\ (x,t)\in I_k,
   \end{cases}
\eeaa
where $k=2j$ (resp. $k=2j-1$).

\medskip

\item[(ii)] For $j\in\{1,2,...,m\}$, if ${x}_{2j}(\cdot)$ is decreasing (resp. ${x}_{2j-1}(\cdot)$ is increasing), then
\beaa
v(x,t)=\begin{cases}
   G_1\left(G_1^{-1}(v_0(x))+t\right),&\ t\leq T_k(x),\ (x,t)\in I_k,\\
   G_0\left(G_0^{-1}(v(x,T_k(x)))+t-T_k(x)\right),&\ t> T_k(x),\ (x,t)\in I_k,
   \end{cases}
\eeaa
where $k=2j$ (resp. $k=2j-1$).
\end{itemize}

Let us first deal with (i). For $t\leq T_k(x)$, since
\beaa
v(x,t)=G_0\left(G_0^{-1}(v_0(x))+t\right),
\eeaa
as in deriving \eqref{Lip-outside}, Lemma~\ref{lem:G0 Lip} and the Lipschitz continuity of $v_0$ imply
the Lipschitz continuity of $v$ for $(x,t)\in I_k$ with $t\leq T_k(x)$.
For $t> T_k(x)$, since $T_k$ is strictly monotone (because of \eqref{phi-monotone}), there exist a unique $z\in(x,x_k(\tau))\subset I_k$ such that
$t=T_k(z)$ (see Figure \ref{fig:1}).

\begin{figure}
  \centering\includegraphics[width=0.4\textwidth]{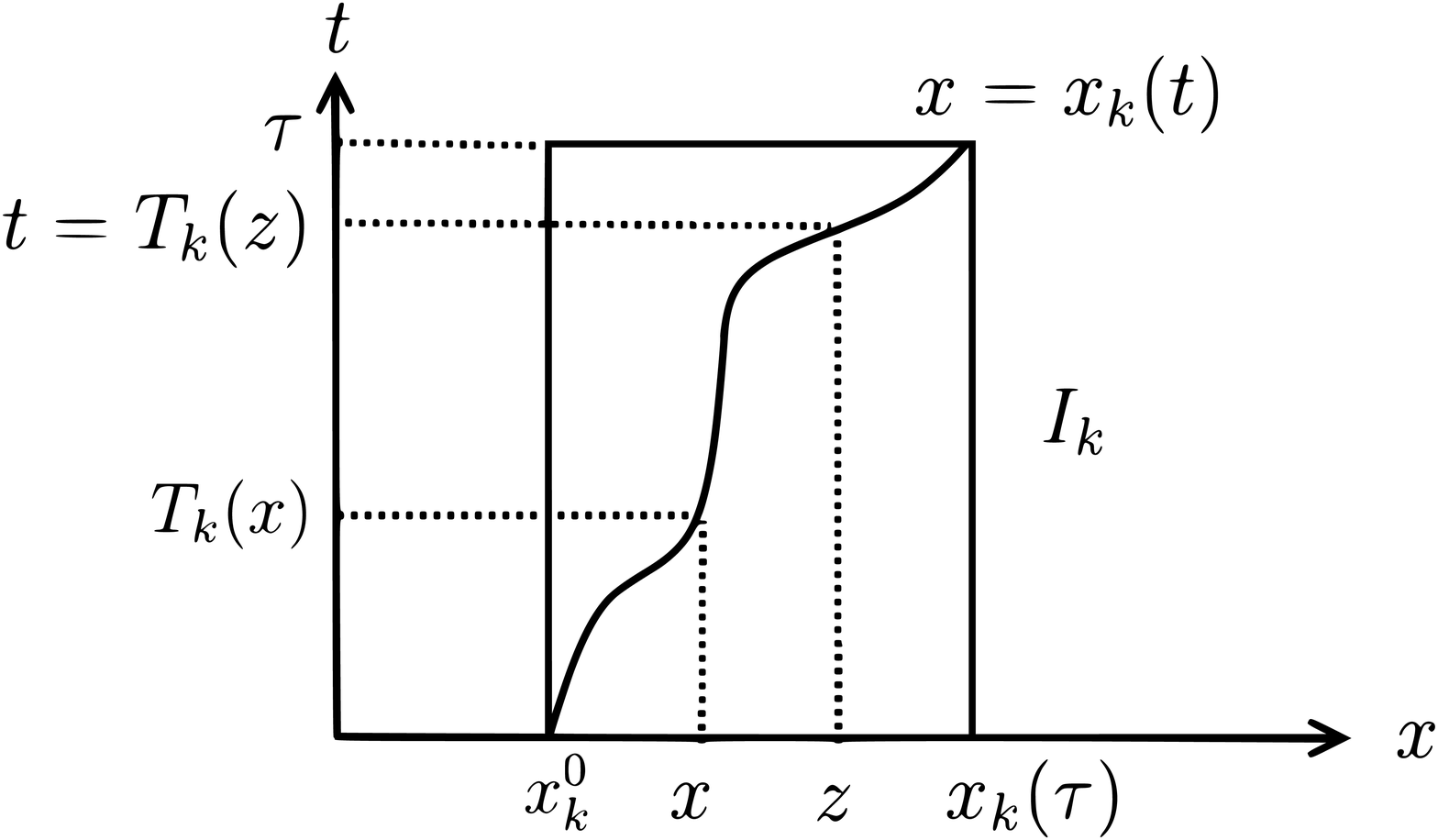}
\caption{A diagram of $I_k$.}
\label{fig:1}       
\end{figure}

Then, if $x<z\le y<x_k(\tau)$, namely, $T_k(x)<t\le T_k(y)$,  we have
\bea
\lefteqn{|v(x,t)-v(y,t)|\label{Lip-inside-est1}}\\
&=&|G_1\left(G_1^{-1}(v(x,T_k(x)))+T_k(z)-T_k(x)\right)-G_0\left(G_0^{-1}(v(y,0))+T_k(z)\right)|\notag\\
&\leq &|G_1\left(G_1^{-1}(v(x,T_k(x)))+T_k(z)-T_k(x)\right)-v(z,T_k(z))|\notag\\
   &&+|G_0\left(G_0^{-1}(v(z,0))+T_k(z)\right)-G_0\left(G_0^{-1}(v(y,0))+T_k(z)\right)|\notag\\
   &{ =:}&J_1+J_2.\notag
\eea
Let us define
\beaa
C_1:=\sup_{(x,t)\in[A,B]\times[0,T]}|v(x,t)|,\quad C_2:=G_1^{-1}(C_1)+T.
\eeaa
Using $v(z,T_k(z))=G_1(G_1^{-1}(v(z,T_k(z))))$ and the mean value theorem, we have
\beaa
J_1&\leq&\|G'_1\|_{L^{\infty}([0,C_2])}|G_1^{-1}(v(x,T_k(x)))-G_1^{-1}(v(z,T_k(z)))+T_k(z)-T_k(x)|\\
&\leq&\|G'_1\|_{L^{\infty}([0,C_2])}\Big(\|(G^{-1}_1)'\|_{L^{\infty}([0,C_1])}|v(x,T_k(x))-v(z,T_k(z))|+|T_k(z)-T_k(x)|\Big).
\eeaa
Note that $(G_1^{-1})'(v)=1/g(1,v)$ and $0<g_1-g_2/g_3\le g(1,v)\le g_1$. Hence $\|G'_1\|_{L^{\infty}([0,C_2])}<\infty$.
Next, using $v(\zeta,T_k(\zeta))=G_0\left(G_0^{-1}(v_0(\zeta))+T_k(\zeta)\right)$ for $\zeta=x,z$ and Lemma~\ref{lem:G0 Lip}, we have
\beaa
J_1&\leq&C_3\Big(|v_0(z)-v_0(x)|+|T_k(z)-T_k(x)|\Big)\\
&\leq&C_3\Big(L_0|x-z|+\frac{1}{\delta}|x-z|\Big)\le C_4|x-y|
\eeaa
for some positive constant $C_3$ and $C_4$. We can also get
\beaa
J_2&\leq& C_5|y-z|\le C_5|x-y|
\eeaa
for some positive constant $C_5$ by the same argument to the proof of outside of $D_T$.
Combining the estimates for $J_1$ and $J_2$, from \eqref{Lip-inside-est1}, we see that $v$ is Lipschitz continuous when $T_k(x)<t$ and $T_k(y)\ge t$. For the case where $T_k(x)\le T_k(y)<t$, we directly obtain
\beaa
\lefteqn{|v(x,t)-v(y,t)|}\\
&=&|G_1\left(G_1^{-1}(v(x,T_k(y)))+t-T_k(y)\right)-G_1\left(G_1^{-1}(v(y,T_k(y)))+t-T_k(y)\right)|\notag\\
&\leq &\|G'_1\|_{L^{\infty}([0,C_2])}\|(G^{-1}_1)'\|_{L^{\infty}([0,C_1])}\left|v(x,T_k(y))-v(y,T_k(y))\right|.
\eeaa
Since this reduces to the previous case,
we see that $v$ is Lipschitz continuous on $I_k$ when (i) holds.
The similar process is applicable to assert the Lipschitz continuity of $v$ on $I_k$ when (ii) occurs. We omit the detailed proof.
\end{proof}

\begin{proof}[Proof of Lemma \ref{lem:A}]
By Proposition~\ref{prop:local-existence},
there exists a positive constant $\tau_0$ such that problem \eqref{SLPini} has a classical solution for $t\in[0,\tau_0)$ and
$x_k(\cdot)$ is strictly monotone on $[0,\tau_0)$ for each $k=1,...,2m$.

We show that the local in time classical $(\Omega,v)$ is unique.
Let $(\Omega,v)$ and $(\widetilde \Omega,\widetilde v)$ be two local in time classical solutions of \eqref{SLPini}  for $t\in[0,\tau_0)$, where
\beaa
\widetilde \Omega(t)=\bigcup_{j=1}^{m}(\widetilde x_{2j-1}(t),\widetilde x_{2j}(t)).
\eeaa
Moreover, we may assume that $x'_k(\cdot)$ and $\widetilde{x}'_k(\cdot)$ never vanish in $[0,\tau_0)$ by choosing $\tau_0$ sufficiently small.
We shall show that $\Omega=\widetilde \Omega$ and $v=\widetilde v$ for $t\in[0,\tau_0)$.
For this, we first show $x_{2m}(t)=\widetilde{x}_{2m}(t)$ for $t\in[0,\tau_0)$. Without loss of generality,
we may assume that $W(v_0(x^0_{2m}))>0$. i.e.,
$x'_{2m}(\cdot)>0$ and $\widetilde{x}'_{2m}(\cdot)>0$.
Without loss of generality, we may assume that
\bea\label{order}
x_{2m}(t)>\widetilde{x}_{2m}(t),\quad t\in(0,\tau_0).
\eea
In fact, the following proof with slight modifications works for the case that $x_{2m}\geq\widetilde{x}_{2m}$.

Let $T_k(\xi)$ (resp. $\widetilde T_k(\xi)$) be the arrival time of $x_k(t)$ (resp. $\widetilde x_k(t)$) at $\xi$.
For each $\xi\in(x_{2m}^0, \widetilde{x}_{2m}(\tau_0^-))$,
\bea\label{T-eq}
\begin{cases}
T_{2m}'(\xi)=\dps\frac{1}{x_{2m}'(T_{2m}(\xi))}=\dps\frac{1}{a-b v(\xi,T_{2m}(\xi))},\vspace{2mm}\\
\widetilde{T}_{2m}'(\xi)=\dps\frac{1}{\widetilde{x}_{2m}'(\widetilde{T}_{2m}(\xi))}=\dps\frac{1}{a-b \widetilde{v}(\xi,\widetilde{T}_{2m}(\xi))}.
\end{cases}
\eea
By \eqref{order}, we have
\bea\label{order2}
T_{2m}(\xi)<\widetilde{T}_{2m}(\xi),\quad \xi\in[x_{2m}^0,\widetilde{x}_{2m}(\tau_0^-)).
\eea
Note that $\widetilde{v}_t(\xi,t)=g(0,\widetilde{v})<0$ for $t\in[{T}_{2m}(\xi),\widetilde{T}_{2m}(\xi)]$,
and  $\widetilde{v}(\xi,T_{2m}(\xi))={v}(\xi,T_{2m}(\xi))$, we have
\beaa
v(\xi,T_{2m}(\xi))>\,\widetilde{v}(\xi,\widetilde{T}_{2m}(\xi)),\quad \xi\in(x_{2m}^0, \widetilde{x}_{2m}(\tau_0^-)).
\eeaa
Together with \eqref{T-eq},
we see that
\beaa
\widetilde{T}'_{2m}(\xi)<{T}'_{2m}(\xi),\quad \xi\in(x_{2m}^0, \widetilde{x}_{2m}(\tau_0^-)).
\eeaa
By integrating over $[x_{2m}^0,\xi]$, we reach a contradiction with \eqref{order2}. Thus, we must have $x_{2m}(t)=\widetilde{x}_{2m}(t)$ for $t\in[0,\tau_0)$.
The same argument can be applied to prove $x_{j}(t)=\widetilde{x}_{j}(t)$ for $t\in[0,\tau_0)$ for $j=1,...,2m-1$, but
the details are tedious. We safely omit the details here.
Thus, we obtain $\Omega=\widetilde \Omega$. Moreover, $v$ and $\widetilde{v}$ can be represented in terms of
the functions in \eqref{G func} as in the proof of Proposition~\ref{prop:local-existence}. Note that
$\Omega=\widetilde \Omega$ implies that $T_i(x)=\widetilde{T}_i(x)$ for any $x\in\mathbb{R}$ and $i=1,\cdots,2m$. It follows that $v=\widetilde{v}$.
From the above discussion, one can use a bootstrap argument to extend the local in time classical solution uniquely until $x_k'$ vanishes for some $k$
or an annihilation occurs. This completes the proof.
\end{proof}

\begin{proof}[Proof of Lemma \ref{lem-3.1}]
Take
$\psi:=v^-{\bf 1}_{(-K,K)\times[0,t_0]}$
as a test function, where $v^-:=\max\{-v,0\}$
and we also used ${\bf 1_\Omega}$ as  the characteristic function of $\Omega\subset \R^2$.
Since
$(v^-)_t=-v_t{\bf 1}_{ \{v(\cdot,t)<0\}}$,
we have
\beaa
\int_0^T\int_\bR v\psi_tdxdt=-\frac12 \int_{-K}^K v^2(x,t_0)\cdot{\bf 1}_{\{ v(\cdot,t_0)<0\}}dx.
\eeaa
Therefore, for any large $K$ and any $t_0\in(0,T)$, it follows from \eqref{eq-w2} that $v^-(x,t_0)=0$ for all $x\in(-K,K)$.
\end{proof}

\begin{proof}[Proof of Lemma \ref{lem:wk-t}]
For any given $\tau\in(0,T)$, we replace $\varphi$ in \eqref{eq-w1} by $\varphi\eta_{\varepsilon}(t)$,
where
\beaa
\eta_{\varepsilon}(t):=
                         \left\{\begin{array}{ll}
                         1,\quad &0\leq t\leq\tau-\varepsilon,\vspace{1mm}\\
                         \dfrac {\tau-t}{\varepsilon},\quad &\tau-\varepsilon\leq t\leq \tau,\vspace{1mm}\\
                         0,\quad &\tau \leq t \leq T.
                         \end{array}\right.
\eeaa
It follows from \eqref{eq-w1} that
\bea\label{eq-w1-tau}
&&-\int_{\R}{\bf 1}_{\Omega(0)}\varphi(x,0) dx=
\int_0^T \int_{\Omega(t)}(\varphi\eta_{\varepsilon})_tdxdt+
\int_{\partial \Omega\cap (\R\times(0,T))}W(v)\varphi\eta_{\varepsilon} |n_1|d\sigma.
\eea
Note that
\beaa
\int_0^T \int_{\Omega(t)}(\varphi\eta_{\varepsilon})_tdxdt&=&\int_0^T \int_{\Omega(t)}\varphi_t\eta_{\varepsilon}dxdt
-\frac{1}{\varepsilon}\int_{\tau-\varepsilon}^\tau \int_{\Omega(t)}\varphi dxdt\\
&&\longrightarrow \int_0^\tau \int_{\Omega(t)}\varphi_t dxdt-\int_{\mathbb{R}}{\bf 1}_{\Omega(\tau)}\varphi(x,\tau) dx\quad \mbox{as $\varepsilon\to 0$}.
\eeaa
Therefore, taking $\varepsilon\to 0$ in \eqref{eq-w1-tau}, we obtain
\beaa
&&\left[\int_{\R}{\bf 1}_{\Omega(t)}\varphi dx\right]_0^\tau
=
\int_0^\tau \int_{\Omega(t)}\varphi_tdxdt+
\int_{\partial \Omega\cap (\R\times(0,\tau))}W(v)\varphi |n_1|d\sigma.
\eeaa

Similarly, if we replace $\psi$ in \eqref{eq-w2} by $\psi\eta_{\varepsilon}(t)$ with $\eta_{\varepsilon}(t)$ defined above,
then the above process can be applied to obtain
\beaa
\left[\int_{\R}v\psi dx\right]_0^\tau=
\int_0^\tau\int_{\R}\Big(v\psi_t+g({\bf 1}_{\Omega(t)},v)\psi\Big )dxdt.
\eeaa
Hence the proof of Lemma~\ref{lem:wk-t} is completed.
\end{proof}
\begin{proof}[Proof of Lemma~\ref{lem:extend}]
By the assumption, we have
\beaa
&&\left[\int_{\R}{\bf 1}_{\Omega_1(t)}\varphi dx\right]_0^{T_1}
={\int_0^{T_1} \int_{\Omega_1(t)}\varphi_tdxdt+}\int_{\partial \Omega_1\cap (\R\times(0,T))}W(v_1)\varphi |n_1|d\sigma,\\
&&\left[\int_{\R}v_1\psi dx\right]_0^{T_1}
=\int_0^{T_1}\int_{\R}\Big(v_1\psi_t+g({\bf 1}_{\Omega_1(t)},v_1)\psi\Big )dxdt,\\
&&\left[\int_{\R}{\bf 1}_{\Omega_2(t)}\varphi dx\right]_{T_1}^{T_2}
={\int_{T_1}^{T_2} \int_{\Omega_2(t)}\varphi_tdxdt+}\int_{\partial \Omega_2\cap (\R\times(0,T))}W(v_2)\varphi |n_1|d\sigma,\\
&&\left[\int_{\R}v_2\psi dx\right]_{T_1}^{T_2}
=\int_{T_1}^{T_2}\int_{\R}\Big(v_2\psi_t+g({\bf 1}_{\Omega_2(t)},v_2)\psi\Big )dxdt.
\eeaa
Let $(\widehat{\Omega},\widehat{v})$ be as in \eqref{ext-omega} and \eqref{ext-v}. By adding the above equalities,
we obtain \eqref{eq-w1} and \eqref{eq-w2} for $0\leq t\leq T_2$ and for $(\widehat{\Omega},\widehat{v})$.
By the assumptions, it is not hard to obtain
$(\widehat{\Omega},\widehat{v})\in X_{T_2}$ and satisfies {\bf(C2)}.
Hence $(\widehat{\Omega},\widehat{v})$ is a weak solution for $0\leq t\leq T_2$ and then the proof is completed.
\end{proof}




\begin{thebibliography}{99}

{
\bibitem{AHM}
M. Alfaro, D. Hilhorst, and H. Matano,
{The singular limit of the Allen--Cahn equation and the FitzHugh--Nagumo system},
Journal of Differential Equations {\bf 245} (2008) 505--565.
}


\bibitem{ASM2003}
S. Alonso, F. Sagues and A. S. Mikhailov,
{\em Taming Winfree turbulence of scroll waves in excitable media},
Science {\bf299} (2003) 1722--1725,


\bibitem{BJG}
E. Ben-Jacob, P. Garik,
{\em The formation of patterns in non-equilibrium growth},
Nature 343.6258 (1990) 523--530

\bibitem{BJS}
E. Ben-Jacob, O. Schochet, A. Tenenbaum, I. Cohen, A. Czirok and T. Vicsek
{\em Generic modelling of cooperative growth patterns in bacterial colonies},
Nature 368.6466 (1994) 46--49


\bibitem{ChenXY}
X.-Y. Chen, {\em Dynamics of interfaces in reaction diffusion systems}. Hiroshima Mathematical Journal {\bf 21} (1991) 47--83.

\bibitem{ChenXF92}
X.-F. Chen, {\em Generation and propagation of interfaces for reaction-diffusion systems}, Trans. Amer. Math. Soc. {\bf 334} (1992) 877--913.


\bibitem{ChenGao}
X.-F. Chen, C. Gao, {\em Well-posedness of a free boundary problem in the limit of slow-diffusion fast-reaction systems}. J. Partial Differ. Equ 19 (2006) 48--79.


\bibitem{CKN} Y.-Y. Chen, Y. Kohsaka and H. Ninomiya,
{\em Traveling spots and traveling fingers in singular limit problems of reaction-diffusion systems},
Discrete Cont. Dyn. Syst. (Ser. B) {\bf19} (2014), 697--714.

\bibitem{CNT}
Y.-Y. Chen, H. Ninomiya and R. Taguchi,
{\em Traveling spots on multi-dimensional excitable media},
Journal of Elliptic and Parabolic Equations {\bf1} (2015), 281--305.

{
\bibitem{CNW2021}
Y.-Y. Chen, H. Ninomiya and C.-H. Wu,
{\em Global dynamics on one-dimensional excitable media}, submitted.
}

{
\bibitem{CNW2021-2}
Y.-Y. Chen, H. Ninomiya and C.-H. Wu,
{\em Weak entire solutions of reaction--interface systems}, in preparation.
}

{
\bibitem{HMN2009} D. Hilhorst, M. Mimura, H. Ninomiya,
{\em Fast reaction limit of competition-diffusion systems},
in: C.M. Dafermos, M. Pokorny (Eds.), Handbook of Differential Equations: Evolutionary Equations, vol. 5,
North-Holland, Hungary, 2009, pp. 105--168.
}

\bibitem{fife1984}
P.C. Fife,
{\em Propagator-controller systems and chemical patterns}. in {\em ``Non-equilibrium dynamics in chemical systems''} edts.
by C. Vidal and A. Pacault (1984), 76-88.

\bibitem{GGI}
Y. Giga, S. Goto and H. Ishii,{\em Global existence of weak solutions for interface equations coupled with diffusion equations},
SIAM Journal on Mathematical Analysis 23.4 (1992) 821--835.



\bibitem{HM}
A. Hagberg, E. Meron,
{\em Pattern formation in non-gradient reaction-diffusion systems: the effects of front bifurcations},
Nonlinearity, {\bf 7} (1994) 805.

{
\bibitem{Hilhorst2007}
D. Hilhorst, G. Karali, H. Matano, K. Nakashima, {\em Singular limit of a spatially inhomogeneous Lotka-Volterra
competition-diffusion system},
Comm. Partial Differential Equations {\bf32} (2007) 879--933.
}

\bibitem{HNM}
D. Hilhorst, Y. Nishiura, and M. Mimura,
{\em A free boundary problem arising in some reacting-diffusing system}. Proceedings of the Royal Society of Edinburgh: Section A Mathematics 118.3-4 (1991): 355--378.




\bibitem{KS}
R. Kapral,K. Showalter (eds.), {\em Chemical waves and patterns}, Vol. 10. (2012) Springer Science \& Business Media.

\bibitem{KM} 
\newblock K. Krischer and A. Mikhailov,
\newblock {\em Bifurcation to traveling spots in reaction-diffusion systems},
\newblock Physical Review Letters \textbf{73} (1994), 3165--3168.

{
\bibitem{Mottoni}
P. de Mottoni, M. Schatzman, {\em Development of interfaces in $\R^n$}, Proc. Roy. Soc. Edinburgh Sect. A
{\bf116} (1990) 207--220.
}


\bibitem{Meron92}
E. Meron, {\em Pattern formation in excitable media}, Phys. Rep.
{\bf 218} (1992) 1--66.

{
\bibitem{Nakamura}
K.-I. Nakamura, H. Matano, D. Hilhorst, R. Schatzle,
{\em Singular limit of a reaction-diffusion equation with a spatially
inhomogeneous reaction term}, J. Stat. Phys. {\bf95} (1999) 1165--1185.
}


\bibitem{NW}
H. Ninomiya, C.-H. Wu, {\em Traveling curved waves in two dimensional excitable media},
SIAM Journal on Mathematical Analysis {\bf49} (2017), 777--817.

\bibitem{MSCS2} E. Mihaliuk, T. Sakurai, F. Chirila, K. Showalter, {\em Feedback stabilization of unstable propagating
waves}, Phys. Review E. {\bf65} (2002), 065602.

\bibitem{MZ} A.S. Mikhailov, V.S. Zykov,{\em Kinematical theory of spiral waves in excitable media: comparison
with numerical simulations}, Physica D {\bf52} (1991), 379--397.


\bibitem{Pismen}
L. M. Pismen, {\em Nonlocal boundary dynamics of traveling spots in a reaction-diffusion system},
Physical Review Letters. {\bf 86} (2001), 548--551.

{
\bibitem{Rubinstein}
J. Rubinstein, P. Sternberg, J.B. Keller, {\em Fast reaction, slow diffusion and curve shortening},
SIAM J. Appl. Math. {\bf49} (1989) 116--133.
}




\bibitem{Tyson88}
J.J. Tyson, J.P. Keener,
{\em Singular perturbation theory of traveling waves in excitable media (a review)}, Physica D {\bf 32}
(1988), 327--361.


\bibitem{Winfree1972}
A.T. Winfree, {\em Spiral waves of chemical activity}, Science. {\bf175} (1972), 634--636.

\end{thebibliography}
\end{document}